\numberwithin{equation}{section}
\theoremstyle{plain}
\newtheorem{thm}{Theorem}[section]
\newtheorem{cor}[thm]{Corollary}
\newtheorem{lem}[thm]{Lemma}
\newtheorem{prop}[thm]{Proposition}
\theoremstyle{definition}
\newtheorem{defn}[thm]{Definition}
\theoremstyle{remark}
\newtheorem{rem}[thm]{Remark}
\newcommand{\N}{\mathbb{N}}
\newcommand{\M}{\mathbb{M}}
\newcommand{\R}{\mathbb{R}}
\newcommand{\Q}{\mathbb{Q}}
\newcommand{\I}{\infty}
\newcommand{\ffint}{\iint_{Q_r} \!\!\!\!\!\!\!\!\!\!\!\!\!\!\text{-----}}
\newcommand{\ffRint}{\iint_{Q_R} \!\!\!\!\!\!\!\!\!\!\!\!\!\!\text{-----}}
\newcommand{\intQrtau}{\iint_{Q_{r\tau}} \!\!\!\!\!\!\!\!\!\!\!\!\!\!\!\!\text{-----}}
\newcommand{\intQvarkr}{\iint_{Q_{\vartheta^kr}} \!\!\!\!\!\!\!\!\!\!\!\!\!\!\!\!\!\!\text{-----}}
\newcommand{\fffint}{\iint_{Q_1} \!\!\!\!\!\!\!\!\!\!\!\!\!\!\text{-----}}
\newcommand{\fthetaint}{\iint_{Q_{\vartheta}} \!\!\!\!\!\!\!\!\!\!\!\!\!\!\text{-----}}
\newcommand{\Div}{\text{div}}
\newcommand{\bp}{\begin{proof}[\ensuremath{\mathbf{Proof}}]}
\newcommand{\ep}{\end{proof}}
\newcommand{\bv}{{\bf v}}
\newcommand{\ba}{{\bf a}}
\newcommand{\bu}{{\bf u}}
\newcommand{\bg}{{\bf g}}
\newcommand{\bF}{{\bf f}}
\newcommand{\bh}{{\bf h}}
\newcommand{\bw}{{\bf w}}
\newcommand{\bzero}{{\bf 0}}
\newcommand{\Mmn}{\M^{m\times n}}
\newcommand{\Mnn}{\M^{n\times n}}
\begin{document}

\title{Compactness methods for doubly nonlinear \\ parabolic systems}

\author{Ryan Hynd\footnote{Department of Mathematics, University of Pennsylvania.  Partially supported by NSF grant DMS-1301628.}}
\maketitle

\begin{abstract}
We study solutions of the system of PDE $D\psi(\bv_t)=\Div DF(D\bv)$, where $\psi$ and $F$ are convex functions. This type of system arises 
in various physical models for phase transitions.  We establish compactness properties of solutions that allow us to verify partial regularity when $F$ is quadratic and 
characterize the large time limits of weak solutions.  Special consideration is also given to systems that are homogeneous and their connections with nonlinear eigenvalue problems. 
While the uniqueness of weak solutions of such systems of PDE remains an open problem, we show scalar equations always have a preferred solution 
that is also unique as a viscosity solution.
\end{abstract}



\section{Introduction}
 In this paper, we consider solutions $\bv: \Omega\times(0,T)\rightarrow \R^m$ of the system of PDE
\begin{equation}\label{mainPDE}
D\psi(\bv_t)=\Div DF(D\bv)
\end{equation} 
where $\Omega\subset\R^n$ is a bounded domain with smooth boundary and $T>0$. Here $\psi\in C^1(\R^m)$ and $F\in C^1(\M^{m\times n})$ are strictly convex functions, $\M^{m\times n}$ is the space of $m\times n$ matrices with real entries, and $\bv=(v^1,\dots, v^m)$ has $m$ component functions $v^i=v^i(x,t)$. Also note $\bv_t=(v^i_t)\in \R^m$ is the time derivative and 
$D\bv=(v^i_{x_j})\in \M^{m\times n}$ is the spatial gradient matrix of $\bv$.  Writing $\psi(w)=\psi(w^1,\dots, w^m)$ for $w=(w^i)\in \R^m$ and $F(M)=F(M^1_1, \dots, M^m_n)$ for $M=(M^i_j)\in \Mmn$, the system \eqref{mainPDE} can also be posed as the $m$ equations
$$
\psi_{w_i}(\bv_t)=\sum^n_{j=1}\left(F_{M^i_j}(D\bv)\right)_{x_j}, \quad i=1, \dots, m. 
$$

\par Equation \eqref{mainPDE} is known as a {\it doubly nonlinear evolution} and arises in mathematical models for phase transitions in thermodynamics, ferro-magnetism, plasticity theory, 
and other phenomena involving dissipation \cite{Colli, Mielke, Visintin1, Visintin}.  Much of the analysis of the resulting model equations concern abstract generalizations of \eqref{mainPDE}. In this paper, we 
focus entirely on the system \eqref{mainPDE} and use PDE techniques to derive refined results on solutions.   

\par  In the work that follows, we present three theorems related to the theory of doubly nonlinear evolutions. The first involves the existence of a particular weak solution of the initial value problem
\begin{align}\label{mainIVP}
\begin{cases}
D\psi(\bv_t)=\Div DF(D\bv), &\quad \Omega\times(0,T) \\
\hspace{.42in}\bv =\bzero, &\quad \partial\Omega\times [0,T)  \\ 
\hspace{.42in}\bv =\bg, &\quad \Omega\times\{0\}
\end{cases}.
\end{align} 
Here $\bg:\Omega\rightarrow \R^m$ is a given function and $\bzero:\R^n\rightarrow \R^m$ is the mapping that is identically equal to $0\in \R^m$. The existence of a weak solution as defined in 
Definition \ref{WeakDefn} below has been established for evolutions related to \eqref{mainPDE}; see for instance the seminal works \cite{Arai, Colli2,Colli} and Proposition \ref{ExistProp} of this paper. 

\par However, the uniqueness of solutions remains largely open. Of course when $D\psi(w)=w$, \eqref{mainPDE}  is naturally interpreted as an $L^2(\Omega; \R^m)$ gradient flow 
associated with the convex functional $\bu\mapsto \int_\Omega F(D{\bf u})dx$.  Initial value problems related to this system are known to be well-posed and to generate a nonlinear contraction semigroup in 
$L^2(\Omega; \R^m)$ \cite{AGS,Barbu, Brezis, Evans}.  Likewise, when $DF(M)=M$ 
\begin{equation}\label{PDEex}
D\psi(\bv_t)=\Delta \bv
\end{equation} 
has a contraction property. Indeed, if $\bv^1$ and $\bv^2$ are two smooth solutions of \eqref{PDEex} that vanish on $\partial \Omega$
\begin{align*}
\frac{d}{dt}\frac{1}{2}\int_\Omega|D\bv^1-D\bv^2|^2dx &=\int_\Omega(D\bv^1-D\bv^2)\cdot (D\bv^1_t-D\bv^2_t) dx \\
&= -\int_\Omega(\Delta \bv^1-\Delta\bv^2)\cdot (\bv^1_t-\bv^2_t) dx \\
&=- \int_\Omega(D\psi(\bv^1_t)-D\psi(\bv^2_t))\cdot (\bv^1_t-\bv^2_t) dx \\
&\le 0,
\end{align*}
as $\psi$ is convex.  Here $M\cdot N:=\text{tr}M^t N=\sum^m_{i=1}\sum^n_{j=1}M^i_j N^i_j$ and $|M|:=\sqrt{M\cdot M}$.  As a result, when either $D\psi$ or $DF$ is {\it linear}, we expect \eqref{mainPDE} to have a good existence and uniqueness theory.  It is unknown if similar is true when $D\psi$ and $DF$ are both nonlinear.


\par More can be said about the uniqueness of solutions of the initial value problem \eqref{mainIVP} when $m=1$.  In this case, the initial value problem \eqref{mainIVP} reduces to 
\begin{align}\label{scalarIVP}
\begin{cases}
\psi'(v_t)=\Div DF(Dv), &\quad \Omega\times(0,T) \\
\hspace{.33in}v =0, &\quad \partial\Omega\times [0,T)  \\ 
\hspace{.33in}v =g, &\quad \Omega\times\{0\}
\end{cases}
\end{align} 
for a scalar function $v:\Omega\times(0,T)\rightarrow \R$. Closely related to this problem is the implicit time scheme: $v^0=g$ 
\begin{equation}\label{scalarIFT}
\begin{cases}
\psi'\left(\frac{v^k - v^{k-1}}{\tau}\right)=\Div DF(Dv^k), \quad & x\in \Omega\\
\hspace{.72in}v^k=0,\quad & x\in \partial \Omega
\end{cases}
\end{equation}
for  $ k\in \N$ and $\tau>0$.   By making standard coercivity and growth assumptions such as \eqref{Coercive} and \eqref{GrowthCond} below, direct methods from the calculus of variations imply \eqref{scalarIFT} has a unique solution sequence $\{v^k_\tau\}_{k\in \N}$. We show
this sequence can be use to construct the unique viscosity solution of \eqref{scalarIVP} which is also a weak solution of \eqref{scalarIVP}. Consequently, there is always a weak solution of \eqref{scalarIVP} that can be singled out as also being a viscosity solution. 
 \begin{thm}\label{ViscSolnResult}
Assume $m=1$, $p\in [2,\infty)$ and $g\in C(\overline{\Omega})\cap  W^{1,p}_0(\Omega)$. Additionally suppose $F$ and $\psi$ satisfy \eqref{Coercive}, \eqref{GrowthCond}, \eqref{FCtwo} and \eqref{RegAssump} below. Denote the solution sequence of the implicit scheme \eqref{scalarIFT} as
$\{v^k_\tau\}_{k\in \N}$, and for $N\in\N$ and $T>0$ define 
\begin{equation}\label{CLaan}
v_N(\cdot, t):=
\begin{cases}
g, \hspace{.4in} t=0\\
v^k_{T/N}, \quad (k-1)T/N< t \le kT/N, \quad k=1, \dots, N\\
\end{cases}.
\end{equation}
Then $v(\cdot,t):=\lim_{N\rightarrow \infty}v_N(\cdot,t)$ exists in  $L^p(\Omega)\cap C(\overline{\Omega})$ uniformly in $t\in [0,T]$. Moreover, $v$ is the unique viscosity solution and a weak solution of the initial value problem \eqref{scalarIVP}.
\end{thm}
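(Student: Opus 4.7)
The plan is to identify the limit of the time-discrete scheme as the unique viscosity solution of the PDE and then verify that it is also a weak solution. First I would recognize each step of \eqref{scalarIFT} as the Euler--Lagrange equation for the strictly convex functional
\[
v \;\longmapsto\; \int_\Omega \tau\,\psi\!\left(\frac{v-v^{k-1}}{\tau}\right) + F(Dv)\,dx
\]
on $W^{1,p}_0(\Omega)$. Under \eqref{Coercive} and \eqref{GrowthCond}, direct methods give a unique minimizer $v^k_\tau\in W^{1,p}_0(\Omega)$, together with energy bounds uniform in $k$ and $\tau$ and a telescoping control on $\sum_k \tau\,\psi((v^k_\tau - v^{k-1}_\tau)/\tau)$.

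The next step is a discrete comparison principle for the scheme: if $v_1^{k-1}\le v_2^{k-1}$ pointwise, then the associated minimizers satisfy $v_1^k\le v_2^k$. This follows by inserting $\min\{v_1^k,v_2^k\}$ and $\max\{v_1^k,v_2^k\}$ into the corresponding functionals and invoking convexity of $\psi$ and $F$. Comparison with static barriers built from $g$ yields an $L^\infty$ bound on $v^k_\tau$; comparing $v_N(\cdot,t+\tau)$ with $v_N(\cdot,t)$ yields a uniform modulus of continuity in time, using the regularity $g\in C(\overline\Omega)\cap W^{1,p}_0(\Omega)$. Interior spatial regularity of each minimizer is supplied by \eqref{FCtwo} and \eqref{RegAssump}. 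Together these give equicontinuity of $\{v_N\}$ on $\overline\Omega\times[0,T]$, hence subsequential uniform convergence via Arzel\`a--Ascoli.

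To identify any subsequential limit $v$ as a viscosity solution, I would apply the half-relaxed limit technique: for a smooth $\varphi$ touching $v$ strictly from above at $(x_0,t_0)$, a standard perturbation produces points $(x_N,t_N)\to (x_0,t_0)$ at which $v_N$ is touched from above. Evaluating the discrete equation against $\varphi$ at these contact points and letting $N\to\infty$, using continuity of $\psi'$ and $DF$, yields the subsolution inequality; the supersolution inequality is symmetric. Uniqueness of viscosity solutions then follows from the standard comparison principle for the quasilinear parabolic equation $v_t=(\psi')^{-1}(\Div DF(Dv))$, which is available because strict convexity and coercivity of $\psi$ make $\psi'$ a homeomorphism of $\R$. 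This uniqueness upgrades subsequential convergence of $v_N$ to convergence of the whole sequence.

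Finally, to verify that $v$ is also a weak solution of \eqref{scalarIVP}, I would pass to the limit in the weak form of \eqref{scalarIFT} against test functions in $C^\infty_c(\Omega\times(0,T))$. The uniform $W^{1,p}_0$-bound and summability of $\tau\psi((v^k-v^{k-1})/\tau)$ yield weak compactness of both $Dv_N$ and the discrete time derivative, and a Minty-type monotonicity argument, leveraging convexity of $F$, identifies the limit of $DF(Dv_N)$. The main obstacle in this program is the simultaneous control of both nonlinearities in the passage to the limit: the temporal modulus from the discrete comparison principle is essential for handling the $\psi'(v_t)$ term under its nonlinear argument, and the spatial regularity afforded by \eqref{FCtwo}--\eqref{RegAssump} is precisely what is needed to identify $\Div DF(Dv)$ in the limit.
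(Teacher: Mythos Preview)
Your outline contains the right ingredients but deploys the half-relaxed limit method in the wrong place, and this creates a real gap. You propose to first prove equicontinuity of $\{v_N\}$ on $\overline\Omega\times[0,T]$ (via discrete comparison for the time modulus and interior elliptic regularity for the space modulus), invoke Arzel\`a--Ascoli, and only then use half-relaxed limits to identify the uniform limit. The trouble is the time modulus: your argument needs $\sup_\Omega|v^1_\tau-g|\to 0$ as $\tau\to 0$, and for $g$ that is merely $C(\overline\Omega)\cap W^{1,p}_0(\Omega)$ this is not clear. The energy comparison only gives $\|v^1_\tau-g\|_{L^p}\lesssim \tau^{(p-1)/p}$, and a maximum-principle argument at an interior maximum of $v^1_\tau-g$ would require $g\in C^2$. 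Your spatial equicontinuity also stops at the interior, whereas you need it up to $\partial\Omega$.

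The paper avoids all of this by using the Barles--Perthame method to \emph{produce} convergence rather than merely to identify a limit. From the $L^\infty$ bound (Lemma~\ref{IFTSeqBound}) one forms $\overline v$ and $\underline v$ directly, shows $\overline v$ is a viscosity subsolution and $\underline v$ a supersolution (this is where the discrete viscosity property of each $v^k$ and the approximation of maxima, Corollary~\ref{LemdiscreteVisc} and Lemma~\ref{ApproxPoints}, enter), and then invokes parabolic comparison to get $\overline v\le \underline v$. Equality of the relaxed limits is equivalent to locally uniform convergence of the full sequence, so equicontinuity drops out \emph{a posteriori}. For the weak-solution conclusion the paper simply quotes Proposition~\ref{ExistProp} (whose proof is essentially the Minty argument you sketch), so that part of your plan matches. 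In short: drop the Arzel\`a--Ascoli step, and let comparison of $\overline v$ and $\underline v$ do the work of establishing convergence.
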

\noindent Theorem \ref{ViscSolnResult} is reminiscent of the Crandall-Liggett generation theorem \cite{CL}. However, the type of 
convergence detailed in our theorem is not immediate since it is unlikely that doubly nonlinear evolutions generate contraction semigroups.

\par Our second main result involves the use of compactness methods to verify partial regularity of solutions of \eqref{mainPDE} when $F$ is quadratic.  While \eqref{PDEex} is merely a special case of doubly nonlinear 
evolutions, it is a system for which little regularity theory directly applies. In fact, writing \eqref{PDEex} in terms of the Legendre transform of $\psi$ 
$$
\bv_t=D\psi^*(\Delta \bv)
$$
reveals that \eqref{PDEex} is an example of a fully nonlinear parabolic {\it system}. In particular, when $m>1$ viscosity solutions methods do not apply and we must 
devise other approaches. Our main tool is a compactness assertion of the second derivatives of weak solutions of \eqref{PDEex}; see Corollary \ref{H2Compactness} below. 

\begin{thm}\label{PartialRegThm}
Assume $\bv$ is a weak solution of \eqref{PDEex} on $\Omega\times(0,T)$, $\psi\in C^2(\R^m)$ and 
$$
\alpha |z|^2\le D^2\psi(w)z\cdot z\le A|z|^2, \quad w,z\in \R^m
$$
for some $A,\alpha>0$.  If
\begin{equation}\label{DVTbound}
D\bv_t\in L^2_{\text{loc}}(\Omega\times(0,T); \Mmn),
\end{equation}
then there is an open set $S\subset \Omega\times(0,T)$ with full Lebesgue measure for which $\bv_t, D^2\bv$ are H\"{o}lder continuous at each point in $S$.  
\end{thm}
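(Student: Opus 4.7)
The plan is to follow the compactness and harmonic-approximation strategy (in the tradition of De Giorgi, Giaquinta--Giusti, and Evans) adapted to the fully nonlinear parabolic system obtained by inverting $D\psi$, namely $\bv_t = D\psi^*(\Delta \bv)$. I aim for an $\varepsilon$-regularity lemma: whenever a scale-invariant $L^2$ excess measuring the oscillation of $\bv_t$ and $D^2\bv$ on a parabolic cylinder is small enough, it must decay geometrically at a smaller scale. Iterating yields Campanato-type decay, hence H\"older continuity of $\bv_t$ and $D^2\bv$ on an open set, while the complement is negligible by Lebesgue differentiation applied to the excess. The hypothesis \eqref{DVTbound} enters precisely to make the compactness step (Corollary \ref{H2Compactness}) strong enough to identify the blow-up limit with a solution of a constant-coefficient linear parabolic system.

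I begin by defining the excess
$$
E(x_0,t_0,r) := \fint_{Q_r(x_0,t_0)} \Bigl( |D^2\bv - (D^2\bv)_{Q_r}|^2 + |\bv_t - (\bv_t)_{Q_r}|^2 \Bigr)\,dx\,dt,
$$
well defined because $|D\psi(w)| \le A|w| + C$ combined with \eqref{DVTbound} and $\Delta\bv = D\psi(\bv_t)$ gives $D^2\bv \in L^2_{\text{loc}}$, together with a companion bound $B(x_0,t_0,r)$ on the $L^2$-means of $|D^2\bv|$ and $|\bv_t|$. The key lemma asserts that for each $M>0$ and $\gamma \in (0,1)$ there exist $\theta \in (0,1/2)$ and $\varepsilon > 0$ such that
$$
B(x_0,t_0,r) \le M \text{ and } E(x_0,t_0,r) \le \varepsilon \;\Longrightarrow\; E(x_0,t_0,\theta r) \le \theta^{2\gamma} E(x_0,t_0,r).
$$
Arguing by contradiction, extract $(x_k,t_k,r_k,\bv^k)$ with $E(x_k,t_k,r_k) = \varepsilon_k^2 \to 0$, bounded $B$, but $E(x_k,t_k,\theta r_k) > \theta^{2\gamma}\varepsilon_k^2$ for every $\theta$. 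Rescale via
$$
\bw^k(y,s) := \tfrac{1}{\varepsilon_k r_k^2}\bigl(\bv^k(x_k + r_k y,\, t_k + r_k^2 s) - P_k(y,s)\bigr),
$$
where $P_k$ is the affine-in-$s$, quadratic-in-$y$ polynomial realizing the averages subtracted in $E$; each $\bw^k$ is then normalized so that its $H^2$-type norm on $Q_1$ equals $1$, and satisfies a PDE obtained by rescaling \eqref{PDEex}. Applying Corollary \ref{H2Compactness} extracts a subsequence with $D^2\bw^k \to D^2\bw$ and $\bw^k_t \to \bw_t$ strongly in $L^2$, while $\beta_k := (\bv^k_t)_{Q_{r_k}}$ converges to some $\beta_\infty \in \R^m$. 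The rescaled equation passes to the linearization
$$
D^2\psi(\beta_\infty)\,\bw_t = \Delta \bw \qquad \text{on } Q_{1/2},
$$
a uniformly parabolic constant-coefficient linear system. Classical Schauder theory yields the improved decay $E^{\bw}(0,0,\theta) \le C\theta^2$, and choosing $\theta$ with $C\theta^{2(1-\gamma)} < 1$ contradicts the assumed failure of decay.

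Iterating the decay lemma gives $E(x_0,t_0,\theta^j r) \le \theta^{2j\gamma} E(x_0,t_0,r)$ at every $(x_0,t_0)$ where the hypothesis holds for some $r$, and Campanato's characterization promotes this to $\bv_t, D^2\bv \in C^{0,\gamma}$ on a neighbourhood of $(x_0,t_0)$. I then define
$$
S := \bigcup_{M>0}\bigcup_{r>0}\bigl\{(x_0,t_0) \in \Omega\times(0,T) : B(x_0,t_0,r) \le M,\ E(x_0,t_0,r) < \varepsilon(M,\gamma)\bigr\}.
$$
For fixed $r$ the averages $B$ and $E$ are continuous in $(x_0,t_0)$, so $S$ is open. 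Lebesgue differentiation applied to $\bv_t, D^2\bv \in L^2_{\text{loc}}$ forces $E(x_0,t_0,r) \to 0$ and $B(x_0,t_0,r)$ to stay bounded at almost every $(x_0,t_0)$, so $S$ has full Lebesgue measure.

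The step I expect to be the \textbf{principal obstacle} is the identification of the blow-up limit with the constant-coefficient linear system. One must pass to the limit in a rescaled equation of the form
$$
\varepsilon_k^{-1}\bigl(D\psi(\beta_k + \varepsilon_k \bw^k_t) - D\psi(\beta_k)\bigr) = \Delta \bw^k,
$$
which requires both a.e.\ pointwise convergence of $\bw^k_t$ (furnished by Corollary \ref{H2Compactness} through \eqref{DVTbound}) and uniform Lipschitz control of $D\psi$ near $\beta_\infty$ (furnished by the two-sided Hessian bound $\alpha I \le D^2\psi \le A I$). Both hypotheses are essential: without \eqref{DVTbound} only weak compactness is available, insufficient to convert the difference quotient into a derivative in the limit, and without the upper Hessian bound the linear coefficient $D^2\psi(\beta_\infty)$ would not be controlled.
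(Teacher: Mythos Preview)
Your overall architecture---blow-up, compactness, identification with a constant-coefficient linear parabolic system, Schauder decay, iteration, Campanato, Lebesgue differentiation---matches the paper's. But two points in your sketch are genuinely off, and they concern exactly the places you flagged as delicate.

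First, you misread Corollary~\ref{H2Compactness}. Its \emph{hypotheses} are strong $L^2$ convergence of $\bv^k_t$ and of $D\bv^k$; its \emph{conclusion} is strong $L^2_{\text{loc}}$ convergence of $D^2\bv^k$. It does not deliver $\bw^k_t\to\bw_t$ strongly, and it does not use \eqref{DVTbound} at all. In the paper's blow-up lemma, the strong convergence of $\bw^k_t$ is obtained first by an energy-identity argument (writing the local energy, passing to the limit using weak lower semicontinuity of $\psi_k$ and $\psi_k^*$, and comparing with the energy identity for the limit), and only then is Corollary~\ref{H2Compactness} invoked to upgrade $D^2\bw^k$ to strong convergence. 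Your sentence ``Applying Corollary~\ref{H2Compactness} extracts a subsequence with $D^2\bw^k\to D^2\bw$ and $\bw^k_t\to\bw_t$ strongly'' collapses two distinct steps and attributes the hard one to a corollary that does not provide it.

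Second, and relatedly, you have the role of \eqref{DVTbound} wrong. The paper's excess includes a third piece,
\[
\fint_{Q_r}\left|\frac{D\bv-(D\bv)_{Q_r}-(D^2\bv)_{Q_r}(y-x)}{r}\right|^2,
\]
and this is what gives a uniform $L^2(Q_1)$ bound on $D\bw^k$ after rescaling; without it you cannot even write the weak formulation of the rescaled equation or run the energy argument above. The price for including this term is that $E(x,t,r)\to 0$ a.e.\ no longer follows from plain Lebesgue differentiation of $\bv_t,D^2\bv$: one applies the space--time Poincar\'e inequality to $D\bv$ to bound the gradient piece by $r^2\fint_{Q_r}|D\bv_t|^2+\fint_{Q_r}|D^2\bv-(D^2\bv)_{Q_r}|^2$, and it is precisely \eqref{DVTbound} that makes the first term vanish as $r\to 0$. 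So \eqref{DVTbound} enters not in the compactness step but in the \emph{measure-theoretic} step showing the regular set has full measure. With your simplified excess, Lebesgue differentiation handles that step trivially---but then your argument never uses \eqref{DVTbound}, which should make you suspicious, and indeed the blow-up step breaks for lack of a bound on $D\bw^k$. (A scale-invariant bound on $D\bw^k_t$ from \eqref{DVTbound} would require $r_k^2\fint_{Q_{r_k}}|D\bv_t|^2\le C\varepsilon_k^2$, which there is no reason to expect.)

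Minor point: the rescaled right-hand side is $\Delta\bw^k+\beta_k$ with $\beta_k=\varepsilon_k^{-1}(\mathrm{tr}\,M_k-D\psi(a_k))$, not $\varepsilon_k^{-1}(D\psi(\beta_k+\varepsilon_k\bw^k_t)-D\psi(\beta_k))$ equal to $\Delta\bw^k$; one has to check separately that $\beta_k$ is bounded.
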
 
\noindent Unfortunately, we do not know if {\it every} weak solution possesses the integrability \eqref{DVTbound}. However, we can show that there are weak solutions of the initial problem \eqref{mainIVP} that do satisfy condition \eqref{DVTbound}. See Proposition \ref{DVTProp}. 
\par Finally, we will use compactness methods to study large time limits for boundary value problems related to \eqref{mainPDE}. In Proposition 
\ref{LargeTimeMan} below, we show that the large time behavior of solutions of \eqref{mainPDE} is governed by the appropriate stationary system associated to \eqref{mainPDE}.  
What we found to be even more interesting is to consider the large time behavior of the homogeneous flow
\begin{align}\label{PflowIVP}
\begin{cases}
|\bv_t|^{p-2}\bv_t=\Div(|D\bv|^{p-2}D\bv), &\quad \Omega\times(0,\infty) \\
\hspace{.51in}\bv = \bzero, &\quad \partial\Omega\times [0,\infty)  \\ 
\hspace{.51in}\bv =\bg, &\quad \Omega\times\{0\}
\end{cases}.
\end{align} 
The system \eqref{PflowIVP} corresponds to \eqref{mainIVP} when $\psi(w)=\frac{1}{p}|w|^p$ and $F(M)=\frac{1}{p}|M|^p$ ($1<p<\infty)$ and is closely related to the minimization problem 
for the optimal $p$-Rayleigh quotient
\begin{equation}\label{Lambdap}
\Lambda_p:=\inf\left\{\frac{\int_\Omega|D\bu(x)|^pdx}{\int_\Omega|\bu(x)|^pdx}:\bu \in W^{1,p}_0(\Omega;\R^m)\setminus\{\bzero\}\right\}.
\end{equation}
Minimizing $\bu$ in \eqref{Lambdap} are called {\it $p$-ground states}.  

\par In section \ref{LongSEct} below, we verify that 
weak solutions of \eqref{PflowIVP} satisfy 
$$
\int_\Omega |D\bv(x,t)|^pdx\le e^{-p\Upsilon_p t}\int_\Omega |D\bg(x)|^pdx
$$
for $t\ge 0$ where 
\begin{equation}\label{UpsAndDowns}
\Upsilon_p:=\Lambda_p^{\frac{1}{p-1}}. 
\end{equation}
We also justify the following monotonicity of the $p$-Rayleigh quotient under the flow \eqref{PflowIVP}
$$
\frac{d}{dt}\frac{\int_\Omega |D\bv(x,t)|^pdx}{\int_\Omega |\bv(x,t)|^pdx}\le 0.
$$
These are crucial steps in the proof of the following result. 
\begin{thm}\label{HomogenousConvergence} Assume $\bg \in W^{1,p}_0(\Omega;\R^m)$ and suppose $\bv$ is a weak solution of \eqref{PflowIVP} that satisfies
$$
\liminf_{t\rightarrow \infty}e^{p\Upsilon_p t}\int_\Omega |D\bv(x,t)|^pdx>0.
$$
Then $\bv(\cdot, t)\neq \bzero$ for each $t\ge 0$ and 
\begin{equation}\label{LamLimit}
\Lambda_p=\lim_{t\rightarrow\infty} \frac{\int_{\Omega}|D\bv(x,t)|^pdx }{\int_{\Omega}|\bv(x,t)|^pdx }.
\end{equation}
Moreover, for any increasing sequence of positive numbers $\{t_k\}_{k\in \N}$ tending to $\infty$,
$\{e^{\Upsilon_p t_k}\bv(\cdot, t_k)\}_{k\in \N}$
has a subsequence that converges to a $p$-ground state in $W^{1,p}_0(\Omega,\R^m)$.
\end{thm}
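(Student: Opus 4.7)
The plan is to combine the stated monotonicity of the $p$-Rayleigh quotient
$$R(t) := \frac{\int_\Omega |D\bv(x,t)|^p\,dx}{\int_\Omega |\bv(x,t)|^p\,dx}$$
with the energy dissipation identity to pin the limit $L := \lim_{t\to\infty} R(t)$ to $\Lambda_p$, and then to use Rellich compactness to extract a subsequential $W^{1,p}_0$-limit of the rescaled flow and identify it as a $p$-ground state.

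For the non-vanishing assertion I first observe that testing \eqref{PflowIVP} against $\bv_t$ gives $\frac{d}{dt}\int_\Omega |D\bv|^p\,dx = -p\int_\Omega |\bv_t|^p\,dx \le 0$, so the gradient energy $A(t) := \int_\Omega |D\bv(x,t)|^p\,dx$ is nonincreasing. If $\bv(\cdot,t_0) = \bzero$ for some $t_0 \ge 0$, then $A(t_0) = 0$ forces $A(t) = 0$ on $[t_0,\infty)$, contradicting the hypothesis $\liminf_{t\to\infty} e^{p\Upsilon_p t} A(t) > 0$.

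For \eqref{LamLimit}, the monotonicity of $R$ gives $L \ge \Lambda_p$ immediately, so the substance is the reverse bound. I test the PDE against $\bv$ itself to get
$$A(t) = -\int_\Omega |\bv_t|^{p-2}\bv_t \cdot \bv\,dx \le \Bigl(\int_\Omega |\bv_t|^p\,dx\Bigr)^{(p-1)/p} B(t)^{1/p}$$
by H\"older, where $B(t) := \int_\Omega |\bv|^p\,dx$. Combined with the identity $A'(t) = -p \int_\Omega |\bv_t|^p\,dx$, this yields $A'(t) \le -p A(t) R(t)^{1/(p-1)}$, and integrating while using $R(s) \ge L$ for all $s$ gives $A(t) \le A(0)\exp(-p L^{1/(p-1)} t)$. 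If $L > \Lambda_p$, then $L^{1/(p-1)} > \Lambda_p^{1/(p-1)} = \Upsilon_p$, and $e^{p\Upsilon_p t} A(t) \to 0$ contradicts the liminf hypothesis; hence $L = \Lambda_p$.

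For the final assertion I set $\bu_k := e^{\Upsilon_p t_k}\bv(\cdot,t_k)$. The stated exponential decay bound together with the liminf hypothesis trap $\int_\Omega |D\bu_k|^p\,dx$ between two positive constants, so $\{\bu_k\}$ is bounded in $W^{1,p}_0(\Omega;\R^m)$, and by Rellich a subsequence $\bu_{k_j} \rightharpoonup \bu_\infty$ weakly in $W^{1,p}_0$ and strongly in $L^p$. Since $R(t_k) \to \Lambda_p$ and $\int_\Omega|\bu_{k_j}|^p\,dx \to \int_\Omega|\bu_\infty|^p\,dx > 0$, we obtain $\int_\Omega|D\bu_{k_j}|^p\,dx \to \Lambda_p \int_\Omega|\bu_\infty|^p\,dx$; lower semicontinuity of the gradient norm together with the definition of $\Lambda_p$ then forces $\int_\Omega|D\bu_\infty|^p\,dx = \Lambda_p \int_\Omega|\bu_\infty|^p\,dx$, so $\bu_\infty$ is a $p$-ground state, and norm convergence plus weak convergence in the uniformly convex space $W^{1,p}_0$ upgrades to strong convergence. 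The main obstacle I anticipate is the rigorous justification of the two integration-by-parts identities for weak solutions (testing against $\bv_t$ and against $\bv$), which should follow from the admissible test function framework developed earlier in the paper; once these are in hand, the H\"older, lsc, and uniform-convexity arguments above run cleanly.
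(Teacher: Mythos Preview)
Your argument is correct, and for the key step—proving \eqref{LamLimit}—it takes a genuinely different and more elementary route than the paper. The paper argues by compactness: it shifts time along a sequence $s_k\to\infty$, applies Theorem~\ref{BlowLem} to the rescaled solutions $\bv^k(x,t)=e^{\Upsilon_p s_k}\bv(x,t+s_k)$ to extract a limiting weak solution $\bw$ of the flow, shows that $e^{p\Upsilon_p t}\int_\Omega|D\bw|^p\,dx$ is constant in $t$, and from this identity deduces that $\bw(\cdot,t)$ is a $p$-ground state for every $t$, whence the Rayleigh quotient of $\bv$ tends to $\Lambda_p$. Your approach instead sharpens the differential inequality behind \eqref{ExpDecay}: rather than bounding $R(t)\ge\Lambda_p$, you keep $R(t)\ge L$ in $A'(t)\le -pA(t)R(t)^{1/(p-1)}$, integrate, and reach a contradiction with the liminf hypothesis when $L>\Lambda_p$. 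This avoids the compactness machinery entirely and is a cleaner path to \eqref{LamLimit}; the paper's route, on the other hand, yields the additional structural information that subsequential limits of the shifted flow are themselves weak solutions. For the final subsequential-convergence assertion your argument and the paper's coincide in substance (weak $W^{1,p}_0$ plus strong $L^p$ compactness, identification of the Rayleigh quotient of the limit, upgrade to strong convergence via norm convergence in a uniformly convex space). One small point you should make explicit: the positivity of $\int_\Omega|\bu_\infty|^p\,dx$ follows because $\int_\Omega|\bu_{k_j}|^p\,dx=\int_\Omega|D\bu_{k_j}|^p\,dx/R(t_{k_j})$ is bounded below by a positive constant, since the numerator is bounded below by the liminf hypothesis and $R$ is bounded above by $R(0)$.
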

\noindent In previous joint work, we showed that when $m=1$ the full limit $\lim_{t\rightarrow \infty}e^{\Upsilon_p t}v(\cdot, t)$ exists in $W^{1,p}_0(\Omega)$ and is a $p$-ground state provided it is not identically $0$ \cite{HyndLindgren}.  

\par The organization of this paper is as follows. In section \ref{CompSect}, we define weak solutions and verify a general compactness assertion of weak solutions. 
Section \ref{ITS} is a discussion of the existence of weak solutions of \eqref{mainIVP} and is where we establish Theorem \ref{ViscSolnResult}. We discuss the regularity of 
solutions in section \ref{RegSect} and also prove Theorem \ref{PartialRegThm}. Finally, in section \ref{LongSEct} we study the large time limits of solutions of \eqref{mainPDE}
and verify Theorem \ref{HomogenousConvergence}. We thank Nader Masmoudi for pointing out a refinement in our original compactness assertion (Theorem \ref{BlowLem})
and Erik Lindgren and Ovidiu Savin for interesting discussions related to this work.


\section{Compactness}\label{CompSect}
An important identity for smooth solutions of \eqref{mainIVP} is 
\begin{equation}\label{EnergyIdentity}
\frac{d}{dt}\int_{\Omega}F(D\bv(x,t))dx=-\int_\Omega D\psi(\bv_t(x,t))\cdot \bv_t(x,t)dx.
\end{equation}
This identity can of course can be integrated in time to yield 
\begin{equation}\label{EnergyIdentity2}
\int^t_0\int_\Omega D\psi(\bv_t(x,s))\cdot \bv_t(x,s)dxds  + \int_{\Omega}F(D\bv(x,t))dx =\int_{\Omega}F(D\bg(x))dx
\end{equation}
for $t\in [0,T]$.  It is now appropriate to assume there is $p\in (1,\infty)$ such that 
\begin{equation}\label{Coercive}
\psi(w)+F(M)\ge \gamma(|w|^p+|M|^p) - \beta, \quad (w,M)\in \R^m\times \Mmn
\end{equation}
for some constants $\gamma> 0$ and $\beta\ge 0$. This is typically called a {\it coercivity condition}.  As equation \eqref{mainPDE} only involves 
$D\psi$ and $DF$, we may assume without any loss of generality that both $\psi$ are $F$ are nonnegative.  We also suppose there is 
a constant $C$ such that
\begin{equation}\label{GrowthCond}
|D\psi(w)|+|DF(M)|\le C(|w|^{p-1}+|M|^{p-1} +1), \quad (w,M)\in \R^m\times\Mmn.
\end{equation}
\par The conditions \eqref{EnergyIdentity2} and \eqref{Coercive}, combined with the convexity of $\psi$, imply a bound on the quantity 
$$
\int^T_0\int_\Omega |\bv_t(x,t)|^pdxdt+\max_{0\le t\le T}\int_\Omega |D\bv(x,t)|^pdx
$$
depending on $\int_{\Omega}F(D\bg(x))dx$ and the constants in \eqref{Coercive} and \eqref{GrowthCond}.  This observation motivates the following definition of a solution of the initial value problem \eqref{mainIVP}. 
\begin{defn}\label{WeakDefn}
Assume $\bg\in W^{1,p}_0(\Omega; \R^m)$. A mapping $\bv$ satisfying 
\begin{equation}\label{NaturalSpace}
\bv_t\in L^p(\Omega\times(0,T); \R^m), \quad \bv\in L^\infty([0,T]; W^{1,p}_0(\Omega;\R^m))
\end{equation}
is a {\it weak solution} of \eqref{mainIVP} provided 
\begin{equation}\label{WeakSolnCond}
\int_\Omega D\psi(\bv_t(x,t))\cdot  \bw(x)dx + \int_\Omega DF(D\bv(x,t))\cdot D\bw(x)dx=0, 
\end{equation}
for each $\bw\in W^{1,p}_0(\Omega; \R^m)$ and Lebesgue almost every $t\in [0,T]$, and 
\begin{equation}\label{InitialCond}
\bv(x,0)=\bg(x).
\end{equation}
\end{defn} 
In view of \eqref{NaturalSpace}, any weak solution $\bv$ takes values in $L^p(\Omega; \R^m)$ continuously in time. As a result, it makes sense to require \eqref{InitialCond}.  
We will also show that a version of the identity \eqref{EnergyIdentity} holds for all weak solutions.  This task would be easy if we could choose $\bw=\bv_t(\cdot, t)$ in \eqref{WeakSolnCond}. However, we do not know if 
$D\bv_t(\cdot, t)\in L^{p}(\Omega;\Mmn)$.  To get around this difficulty, we appeal to an abstract result about convex functionals on reflexive Banach Spaces. 

\begin{lem}\label{EnergyLemma}
Assume $\bv$ is a weak solution of \eqref{mainIVP}. Then \eqref{EnergyIdentity} holds for Lebesgue almost every $t\in [0,T]$. In particular, $[0,T]\ni t\mapsto \int_\Omega F(D\bv(x,t))dx$ is absolutely continuous and 
\eqref{EnergyIdentity2} is valid for every $t\in [0,T]$. 
\end{lem}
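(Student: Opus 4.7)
The plan is to recast the energy identity as a chain rule for a convex functional and establish it via a subdifferential argument. Define $\Phi : L^p(\Omega;\R^m) \to \R \cup \{+\infty\}$ by $\Phi(\bu) = \int_\Omega F(D\bu(x))\, dx$ when $\bu \in W^{1,p}_0(\Omega;\R^m)$ and $\Phi(\bu) = +\infty$ otherwise; by convexity of $F$ and \eqref{GrowthCond} this is a proper, convex, lower semicontinuous functional on the reflexive space $L^p(\Omega;\R^m)$. The central claim is that for a.e. $t \in [0,T]$, the element $\xi(t) := -D\psi(\bv_t(\cdot,t))$, which lies in $L^{p'}(\Omega;\R^m)$ by \eqref{GrowthCond}, belongs to the $(L^p, L^{p'})$-subdifferential of $\Phi$ at $\bv(\cdot, t)$. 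Indeed, for any $\bu \in W^{1,p}_0(\Omega;\R^m)$, the pointwise inequality $F(D\bu) \ge F(D\bv(\cdot,t)) + DF(D\bv(\cdot,t)) \cdot (D\bu - D\bv(\cdot,t))$ integrated over $\Omega$, together with \eqref{WeakSolnCond} tested against $\bw = \bu - \bv(\cdot, t)$, yields
\begin{equation*}
\Phi(\bu) \ge \Phi(\bv(\cdot, t)) + \int_\Omega \xi(t) \cdot (\bu - \bv(\cdot, t))\, dx,
\end{equation*}
and the inequality is trivial for $\bu \notin W^{1,p}_0$.

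With this inclusion in hand, I would produce the chain rule by comparing the subgradient inequality at two distinct times. Because $\bv_t \in L^p(\Omega \times (0,T);\R^m)$, the map $t \mapsto \bv(\cdot, t)$ is absolutely continuous from $[0,T]$ into $L^p(\Omega;\R^m)$ with $L^p$-derivative $\bv_t(\cdot, t)$ at a.e. $t$. Applying the subgradient inequality at $t_1$ with $\bu = \bv(\cdot, t_2)$ and at $t_2$ with $\bu = \bv(\cdot, t_1)$ produces the sandwich
\begin{equation*}
\int_\Omega \xi(t_1) \cdot (\bv(\cdot, t_2) - \bv(\cdot, t_1))\, dx \le \Phi(\bv(\cdot, t_2)) - \Phi(\bv(\cdot, t_1)) \le \int_\Omega \xi(t_2) \cdot (\bv(\cdot, t_2) - \bv(\cdot, t_1))\, dx.
\end{equation*}
Setting $t_1 = t$, $t_2 = t + h$, dividing by $h$, integrating $t$ on a subinterval $[a, b]$, and passing to $h \to 0^+$, using the $L^p$-convergence $(\bv(\cdot, t+h) - \bv(\cdot, t))/h \to \bv_t(\cdot, t)$ combined with translation continuity of $\xi$ in $L^{p'}(\Omega \times (0,T);\R^m)$, both outer terms converge to $\int_a^b \int_\Omega \xi(t) \cdot \bv_t(\cdot, t)\, dx\, dt$. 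Lebesgue differentiation forces the middle term to converge to $\Phi(\bv(\cdot, b)) - \Phi(\bv(\cdot, a))$ for a.e.\ $a, b$, yielding absolute continuity of $\Phi \circ \bv$ on $[0,T]$ and, upon differentiating, the pointwise identity \eqref{EnergyIdentity}; integration then gives \eqref{EnergyIdentity2} on any $[a,b] \subset [0, T]$.

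To close the interval at $t = 0$, one needs continuity of $\Phi \circ \bv$ there. The $L^p$-convergence $\bv(\cdot, t) \to \bg$ together with the uniform $W^{1,p}_0$ bound from \eqref{NaturalSpace} implies weak $W^{1,p}_0$-convergence, and weak lower semicontinuity of $\Phi$ gives $\Phi(\bg) \le \liminf_{t \to 0^+} \Phi(\bv(\cdot, t))$. The matching upper bound follows by applying the subgradient inequality with the fixed test $\bu = \bg \in W^{1,p}_0$, averaging over $t \in (0, \delta)$, and letting $\delta \to 0$, where the cross term vanishes because $\xi \in L^{p'}(\Omega \times (0, T);\R^m)$ and $\bv(\cdot, t) \to \bg$ in $L^p$. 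The principal obstacle, as the author flags, is the inability to use $\bv_t(\cdot, t)$ as a test function in \eqref{WeakSolnCond} since $D\bv_t$ is not known to be $L^p$-integrable; working with $\Phi$ in the $(L^p, L^{p'})$ duality rather than the $(W^{1,p}_0, W^{-1,p'})$ duality bypasses this because $\xi(t)$ is already an $L^{p'}$ function by virtue of \eqref{GrowthCond} and the equation itself.
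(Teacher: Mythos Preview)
Your proposal is correct and follows essentially the same strategy as the paper: define the convex functional $\Phi$ on $L^p(\Omega;\R^m)$, identify $-D\psi(\bv_t(\cdot,t))$ as an element of $\partial\Phi(\bv(\cdot,t))$ via \eqref{WeakSolnCond}, and deduce the energy identity from a chain rule for $\Phi\circ\bv$. The only difference is that the paper dispatches the chain rule in one line by invoking Proposition~1.4.4 of \cite{AGS} (which applies since $t\mapsto \int_\Omega D\psi(\bv_t)\cdot\bv_t\,dx\in L^1(0,T)$), whereas you reprove that abstract result by hand via the two-sided subgradient sandwich and a limiting argument; your extra work at $t=0$ is likewise subsumed in the cited proposition.
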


\begin{proof}
For $\bu \in L^p(\Omega; \R^m)$ define 
\begin{equation}\label{convFuncPhi}
\Phi(\bu):=
\begin{cases}
\int_{\Omega}F(D\bu(x))dx, & \bu\in W^{1,p}_0(\Omega; \R^m)\\
+\infty, & \text{otherwise}
\end{cases}.
\end{equation}
Observe that $\Phi$ is proper, lower-semicontinuous, and convex. Moreover, \eqref{WeakSolnCond} implies 
$$
\partial \Phi(\bv(\cdot, t)) = \{-D\psi(\bv_t(\cdot,t))\}
$$
for Lebesgue almost every $t\in [0,T]$. The bound \eqref{GrowthCond} implies $t\mapsto \int_\Omega D\psi(\bv_t(x, t))\cdot \bv_t(x, t) dx \in L^1(0,T)$. By Proposition 1.4.4 in \cite{AGS}, $[0,T]\ni t\mapsto \Phi(\bv(\cdot, t))$ is absolutely continuous and the chain rule gives that \eqref{EnergyIdentity} holds for almost
every $t\in [0,T]$. Finally, \eqref{EnergyIdentity2} follows from integration. 
\end{proof}
\begin{cor}\label{EntropyLem}
Assume $\bv$ is a weak solution of \eqref{mainIVP}. If 
\begin{equation}\label{minPSI}
\min_{w\in\R^m}\psi(w)=\psi(0),
\end{equation}
then $[0,T]\ni t\mapsto \int_\Omega F(D\bv(x,t))dx$ is nonincreasing. 
\end{cor}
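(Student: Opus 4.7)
The plan is to combine the energy identity from Lemma \ref{EnergyLemma} with a convexity argument that turns the hypothesis $\min_{w}\psi(w)=\psi(0)$ into the pointwise nonnegativity of the dissipation integrand $D\psi(\bv_t)\cdot \bv_t$.

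First I would invoke Lemma \ref{EnergyLemma} to conclude that the map $t\mapsto \int_\Omega F(D\bv(x,t))dx$ is absolutely continuous on $[0,T]$ with
\[
\frac{d}{dt}\int_\Omega F(D\bv(x,t))dx=-\int_\Omega D\psi(\bv_t(x,t))\cdot \bv_t(x,t)dx
\]
for a.e.\ $t\in[0,T]$. Since absolutely continuous functions are monotone nonincreasing whenever their a.e.\ derivative is $\le 0$, it is enough to verify that the integrand on the right is nonnegative almost everywhere.

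Next I would exploit the hypothesis \eqref{minPSI}. Because $\psi\in C^1(\R^m)$ attains its minimum at $w=0$, we have $D\psi(0)=\mathbf{0}$. Convexity of $\psi$ then gives the subgradient inequality
\[
\psi(0)\ge \psi(w)+D\psi(w)\cdot(0-w),\qquad w\in\R^m,
\]
which rearranges to $D\psi(w)\cdot w\ge \psi(w)-\psi(0)\ge 0$ for every $w\in\R^m$. Applying this with $w=\bv_t(x,t)$ yields $D\psi(\bv_t(x,t))\cdot \bv_t(x,t)\ge 0$ pointwise, hence the derivative above is $\le 0$ at almost every time.

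I do not foresee any serious obstacle here; the work has essentially been done in Lemma \ref{EnergyLemma}, and the only new ingredient is the elementary convexity observation that $D\psi(w)\cdot w\ge 0$ once $0$ is a minimizer of $\psi$. The argument does not require any additional regularity or structural hypothesis beyond those already in force, so the conclusion follows directly by integrating the sign of the dissipation.
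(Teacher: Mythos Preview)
Your proposal is correct and matches the paper's argument essentially verbatim: the paper also invokes the energy identity \eqref{EnergyIdentity} from Lemma \ref{EnergyLemma} and uses the convexity inequality $0\le \psi(w)-\psi(0)\le D\psi(w)\cdot w$ to conclude that the dissipation term is nonnegative.
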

\begin{proof}
 As $\psi$ is convex and continuously differentiable, $0\le \psi(w)-\psi(0)\le  D\psi(w)\cdot w$ for each $w\in \R^m$. Consequently, the claim follows from
\eqref{EnergyIdentity}. 
\end{proof}
\begin{rem}
Interpreting $t\mapsto \int_\Omega F(D\bv(x,t))dx$ as a type of energy, Corollary \ref{EntropyLem} asserts that dissipation occurs when \eqref{minPSI} holds.  In particular, 
$$
\frac{d}{dt}\int_{\Omega}F(D\bv(x,t))dx\le -\int_\Omega(\psi(\bv_t(x,t))-\psi(0))dx
$$
and so $\psi$ controls the rate of dissipation. 
\end{rem}
While \eqref{mainIVP} may not have a unique solution, the only solution of the initial value problem when $\bg=\bzero$ is also equal to $\bzero$. Likewise, if $\bg$ is a minimizer of 
$\Phi$ defined in \eqref{convFuncPhi}, then any weak solution coincides with $\bg$ for all later times. 
\begin{cor}
Assume $\bv$ is a weak solution of \eqref{mainIVP}  and that $\psi$ satisfies \eqref{minPSI}. (i) If $\bg =\bzero$, then $\bv(\cdot, t)=\bzero$ for $t\in [0,T]$. 
(ii) If $\bg$ is a minimizer of $\Phi$, then $\bv(\cdot, t)=\bg$ for $t\in [0,T]$.  
\end{cor}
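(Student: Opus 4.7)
My plan is to derive both parts from the energy identity \eqref{EnergyIdentity2} and the monotonicity asserted in Corollary \ref{EntropyLem}. The central mechanism is that both (i) and (ii) will reduce to showing $\Phi(\bv(\cdot,t)) = \Phi(\bg)$ for all $t$, which via the energy identity forces
$$\int_0^t\int_\Omega D\psi(\bv_t)\cdot \bv_t\, dx\, ds = 0,$$
and then strict convexity of $\psi$ combined with \eqref{minPSI} pins down $\bv_t \equiv 0$.

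I would prove (ii) first. Assuming $\bg$ is a minimizer of $\Phi$, Corollary \ref{EntropyLem} yields $\Phi(\bv(\cdot,t)) \le \Phi(\bg)$, while minimality gives the reverse inequality; hence $\Phi(\bv(\cdot,t)) = \Phi(\bg)$ for all $t\in[0,T]$. Substituting into \eqref{EnergyIdentity2} gives the vanishing of the dissipation integral above. Convexity of $\psi$ and \eqref{minPSI} imply the pointwise bound $D\psi(w)\cdot w \ge \psi(w) - \psi(0) \ge 0$, so the nonnegative integrand $D\psi(\bv_t)\cdot \bv_t$ must vanish a.e.\ on $\Omega\times(0,T)$. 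This in turn forces $\psi(\bv_t)=\psi(0)$ a.e., and strict convexity of $\psi$ (whose minimum is attained only at $0$ thanks to \eqref{minPSI}) yields $\bv_t = 0$ a.e. Therefore $\bv(\cdot,t)=\bg$ throughout $[0,T]$.

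For (i), the cleanest route is to reduce to (ii) by observing that $\bzero\in W^{1,p}_0(\Omega;\R^m)$ is itself a minimizer of $\Phi$. This follows from the subgradient inequality: for any $\bu\in W^{1,p}_0(\Omega;\R^m)$,
$$F(D\bu(x)) \ge F(0) + DF(0)\cdot D\bu(x),$$
and since $DF(0)$ is a constant matrix and each component of $\bu$ vanishes on $\partial\Omega$, an integration by parts gives $\int_\Omega DF(0)\cdot D\bu\, dx = 0$. Hence $\Phi(\bu) \ge |\Omega|\,F(0) = \Phi(\bzero)$, and then (ii) applied with $\bg = \bzero$ delivers (i).

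I do not foresee any substantial obstacle. The one point that needs care is the passage from $D\psi(\bv_t)\cdot \bv_t = 0$ a.e.\ to $\bv_t = 0$ a.e.; this genuinely uses \emph{strict} convexity of $\psi$ together with \eqref{minPSI} (the claim would fail for, e.g., $\psi\equiv 0$), and it is the reason why strict convexity is hypothesized from the outset.
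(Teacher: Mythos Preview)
Your proof is correct. The main structural difference from the paper is the order and the mechanism used in part (ii). The paper proves (i) first and directly: it uses Jensen's inequality (in place of your subgradient inequality) to show $\int_\Omega F(D\bv(\cdot,t))\,dx \ge |\Omega|F(O_{m\times n})$, obtains equality of energies, and then appeals to the energy identity and \eqref{minPSI} to conclude $\bv_t=\bzero$ --- exactly your dissipation argument. For (ii), however, the paper takes a shorter route using strict convexity of $F$ rather than of $\psi$: once $\Phi(\bv(\cdot,t))\le\Phi(\bg)$ and $\bg$ is a minimizer, $\bv(\cdot,t)$ is also a minimizer of $\Phi$, and since $F$ is strictly convex $\Phi$ has a unique minimizer, so $\bv(\cdot,t)=\bg$ immediately. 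Your approach instead reruns the dissipation-equals-zero argument and invokes strict convexity of $\psi$; this is slightly longer but has the merit of making (ii) independent of strict convexity of $F$, and it allows you to deduce (i) from (ii) uniformly.
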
 
\begin{proof}
$(i)$ Let $O_{m\times n}$ denote the $m\times n$ matrix with each entry equal to $0$. By our assumptions and the previous claim, $\int_\Omega F(D\bg(x))dx = |\Omega| F(O_{m\times n})\ge \int_\Omega F(D\bv(x,t))dx$ for $t\in [0,T]$. By
Jensen's inequality 
\begin{align*}
\int_\Omega F(D\bv(x,t))dx \ge |\Omega| F\left(\frac{1}{|\Omega|}\int_\Omega D\bv(x,t)dx \right)= |\Omega| F(O_{m\times n})
\end{align*}
as $\bv(\cdot, t)$ vanishes on $\partial \Omega$.  Thus, $\int_\Omega F(D\bg(x))dx = \int_\Omega F(D\bv(x,t))dx$, and it follows that from the energy identity \eqref{EnergyIdentity2} and 
 assumption \eqref{minPSI} that $\bv_t=\bzero$. 
\par $(ii)$ By  \eqref{minPSI}, $\int_\Omega F(D\bg(x))dx \ge \int_\Omega F(D\bv(x,t))dx$. Thus if $\bg$ is a minimizer of $\Phi$, so is $\bv(\cdot, t)$ for each $t\in [0,T]$. Since $F$ is strictly convex, $\Phi$ 
has a single minimizer and thus the claim follows. 
\end{proof}
The energy identity \eqref{EnergyIdentity} also can we obtained by simply multiplying the PDE \eqref{mainPDE} by $\bv_t$ and then integrating by parts. Multiplying by $\bv_{tt}$ formally gives us a non-decreasing quantity 
involving the Legendre transform of $\psi$ 
$$
\psi^*(\xi)=\sup_{w\in \R^m}\left\{w\cdot \xi -\psi(w)\right\}, \quad \xi\in \R^m.
$$ 
Surprisingly, the following assertion seems to have gone unnoticed in the literature of doubly nonlinear equations.  We also will present a rigorous version of this assertion below; see 
inequality \eqref{RigorNewDecrease}. 

\begin{prop}\label{NewDecrease}
Assume $F\in C^2(\Mmn)$ and $\bv$ is a smooth solution of \eqref{mainIVP}. Then 
$$
[0,T]\ni t\mapsto \int_\Omega\psi^*(D\psi(\bv_t(x,t)))dx
$$
is nonincreasing. 
\end{prop}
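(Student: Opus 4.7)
The plan is to differentiate $t\mapsto \int_\Omega \psi^*(D\psi(\bv_t(x,t)))\,dx$ directly under the integral sign and show the derivative is nonpositive, using the PDE \eqref{mainPDE} and the convexity of $F$. The key tool is the Legendre duality identity $D\psi^*(D\psi(w))=w$, or equivalently $\psi^*(D\psi(w)) = D\psi(w)\cdot w - \psi(w)$, which holds for every $w\in\R^m$ because $\psi$ is strictly convex and of class $C^1$.

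First I would apply the chain rule pointwise in $(x,t)$:
$$
\frac{\partial}{\partial t}\psi^*(D\psi(\bv_t)) = D\psi^*(D\psi(\bv_t))\cdot \partial_t D\psi(\bv_t) = \bv_t\cdot \partial_t D\psi(\bv_t).
$$
Next, differentiating the PDE $D\psi(\bv_t)=\Div DF(D\bv)$ in $t$ and commuting derivatives gives $\partial_t D\psi(\bv_t) = \Div\bigl(D^2F(D\bv)D\bv_t\bigr)$. Substituting, integrating over $\Omega$, and integrating by parts — using $\bv_t=\bzero$ on $\partial\Omega$, which follows from the Dirichlet condition $\bv=\bzero$ on $\partial\Omega$ — yields
$$
\frac{d}{dt}\int_\Omega \psi^*(D\psi(\bv_t))\,dx = -\int_\Omega D\bv_t \cdot D^2F(D\bv)D\bv_t\,dx.
$$
Finally, the convexity of $F$ makes $D^2F(M)$ positive semidefinite as a quadratic form on $\Mmn$ at every $M$, so the integrand is nonnegative and the time derivative is $\le 0$, which is exactly the monotonicity claimed.

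For the smooth solutions considered in this proposition there is no serious obstacle — every step above is immediately justified by smoothness and the fact that $F\in C^2$. What is noteworthy is simply the identification of the right monotone functional, which can be motivated formally by testing the PDE against $\bv_{tt}$ and recognizing $\bv_t\cdot \partial_t D\psi(\bv_t)$ as $\partial_t\psi^*(D\psi(\bv_t))$ via the Legendre chain rule. The genuinely hard part — passing to weak solutions to obtain the promised rigorous inequality \eqref{RigorNewDecrease} — lies ahead: then $\bv_{tt}$ and $D\bv_t$ need not have enough integrability to justify the manipulations above, and an approximation procedure (for instance a time-discretization in the spirit of \eqref{scalarIFT}, or a mollification) will be needed to make sense of the identity and then pass to the limit using lower semicontinuity of the dissipation term on the right-hand side.
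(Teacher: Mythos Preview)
Your proof is correct and arrives at the same dissipation identity as the paper, namely $\frac{d}{dt}\int_\Omega\psi^*(D\psi(\bv_t))\,dx = -\int_\Omega D^2F(D\bv)D\bv_t\cdot D\bv_t\,dx \le 0$. The paper's organization is a minor variant: it first rewrites the integral via $\psi^*(D\psi(w))=D\psi(w)\cdot w-\psi(w)$ and the PDE, then differentiates in $t$ (producing $\bv_{tt}$ terms that cancel after an integration by parts), whereas you differentiate the PDE itself in $t$ and use $D\psi^*(D\psi(w))=w$ directly, which avoids $\bv_{tt}$ altogether.
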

\begin{proof}
First note that
\begin{align*}
\int_\Omega\psi^*(D\psi(\bv_t(x,t)))dx & =\int_\Omega D\psi(\bv_t(x,t))\cdot \bv_t(x,t)- \psi(\bv_t(x,t))dx \\
& = - \int_\Omega(DF(D\bv(x,t))\cdot D\bv_t(x,t) +\psi(\bv_t(x,t)))dx.
\end{align*}
As a result
\begin{align*}
\frac{d}{dt}\int_\Omega\psi^*(D\psi(\bv_t(x,t)))dx &= - \int_\Omega \sum^m_{i,k=1}\sum^n_{j,l=1}F_{M^i_jM^k_l}(D\bv(x,t))(\bv_t)^i_{x_j}(x,t)(\bv_t)^k_{x_l}(x,t)dx \\
&\quad - \int_\Omega DF(D\bv(x,t))\cdot D\bv_{tt}(x,t)dx + \int_\Omega D\psi(\bv_t(x,t))\cdot \bv_{tt}(x,t)dx\\
&\le  - \int_\Omega DF(D\bv(x,t))\cdot D\bv_{tt}(x,t)dx + \int_\Omega D\psi(\bv_t(x,t))\cdot \bv_{tt}(x,t)dx\\
&=0. 
\end{align*}
\end{proof}
Now we verify an important compactness property of solutions of the initial value problem \eqref{mainIVP}.  The following claim and its proof will help 
us establish each of the main results of this paper.  

\begin{thm}\label{BlowLem}
Assume $\{\bv^k\}_{k\in \N}$ is a sequence of weak solutions of \eqref{mainIVP} and \\ $\{\bg^k:=\bv^k(\cdot,0)\}_{k\in \N}
\subset W^{1,p}_0(\Omega; \R^m)$ is a bounded sequence.  Then 
there is a subsequence $\{ \bv^{k_j}\}_{j\in \N}$ and $\bv$ satisfying \eqref{NaturalSpace} such that 
\begin{equation}\label{weakConv1}
\bv^{k_j}\rightarrow \bv \; \text{in}\; 
\begin{cases}
C([0,T],L^p(\Omega; \R^m))\\
L^r([0,T]; W^{1,p}_0(\Omega; \R^m)),\;\; 1\le r<\infty
\end{cases}
\end{equation}
and 
\begin{equation}\label{StrongConv}
\bv_t^{k_j}\rightarrow \bv_t \; \text{in}\; L^p(\Omega\times [0,T]; \R^m).
\end{equation}
Moreover, $\bv$ is a weak solution of \eqref{mainIVP} where $\bg$ is a weak limit of $\{\bg^{k_j}\}_{j\in \N}$. 
\end{thm}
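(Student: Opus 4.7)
The plan is to follow the classical compactness strategy for nonlinear parabolic problems: derive uniform a priori bounds from the energy identity, extract weak limits, upgrade $\bv^{k_j}$ to strong convergence via an Arzel\`a--Ascoli/Aubin--Lions argument, and then use Minty's monotonicity trick together with the strict convexity of $F$ and $\psi$ to promote the nonlinear quantities $D\bv^{k_j}$ and $\bv^{k_j}_t$ to strong convergence.

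First, applying Lemma~\ref{EnergyLemma} to each $\bv^k$ and combining the convexity estimate $D\psi(w)\cdot w\ge \psi(w)-\psi(0)$ with the coercivity \eqref{Coercive} and the boundedness of $\{\bg^k\}$ in $W^{1,p}_0$ yields
\[
\sup_k\Bigl(\|\bv^k\|_{L^\infty(0,T;W^{1,p}_0)} + \|\bv^k_t\|_{L^p(\Omega\times(0,T))}\Bigr)<\infty,
\]
while the growth bound \eqref{GrowthCond} controls $DF(D\bv^k)$ and $D\psi(\bv^k_t)$ in $L^{p'}$. After relabelling I extract a subsequence with $\bv^{k_j}\rightharpoonup^*\bv$ in $L^\infty(0,T;W^{1,p}_0)$, $\bv^{k_j}_t\rightharpoonup \bv_t$ in $L^p$, $DF(D\bv^{k_j})\rightharpoonup \chi$ and $D\psi(\bv^{k_j}_t)\rightharpoonup \eta$ in $L^{p'}$, and $\bg^{k_j}\rightharpoonup \bg$ in $W^{1,p}_0$. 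The time-derivative bound gives the uniform H\"older estimate $\|\bv^k(t)-\bv^k(s)\|_{L^p}\le |t-s|^{1-1/p}\|\bv^k_t\|_{L^p(\Omega\times(0,T))}$; Rellich--Kondrachov compactness $W^{1,p}_0\hookrightarrow\hookrightarrow L^p$ plus Arzel\`a--Ascoli then yield strong convergence $\bv^{k_j}\to\bv$ in $C([0,T];L^p(\Omega;\R^m))$, which forces $\bv(\cdot,0)=\bg$.

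To identify $\chi$ and upgrade the gradient convergence to strong, I insert the admissible test $\bw(x)=\bv^{k_j}(x,t)-\bv(x,t)\in W^{1,p}_0(\Omega;\R^m)$ into \eqref{WeakSolnCond} and integrate in $t$, obtaining
\[
\int_0^T\!\!\int_\Omega D\psi(\bv^{k_j}_t)\cdot(\bv^{k_j}-\bv)\,dx\,dt+\int_0^T\!\!\int_\Omega DF(D\bv^{k_j})\cdot(D\bv^{k_j}-D\bv)\,dx\,dt=0.
\]
The first integral vanishes in the limit by weak--strong duality ($D\psi(\bv^{k_j}_t)$ is bounded in $L^{p'}$ while $\bv^{k_j}-\bv\to \bzero$ in $L^p$), forcing the second to vanish as well. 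Adding and subtracting $DF(D\bv)$ and using the weak convergences gives
\[
\lim_{j\to\infty}\int_0^T\!\!\int_\Omega \bigl(DF(D\bv^{k_j})-DF(D\bv)\bigr)\cdot(D\bv^{k_j}-D\bv)\,dx\,dt=0.
\]
The integrand is nonnegative by monotonicity of $DF$, so strict convexity of $F$ forces $D\bv^{k_j}\to D\bv$ almost everywhere along a further subsequence; the uniform $L^p$ bound and Vitali's theorem then upgrade this to strong convergence in $L^r(0,T;L^p)$ for every $r\in[1,\infty)$, and consequently $\chi=DF(D\bv)$.

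For the strong convergence of $\bv^{k_j}_t$ a Minty-type argument is used again, but one cannot simply insert $\bv^{k_j}_t$ into \eqref{WeakSolnCond} since no boundary trace information is available. Instead I exploit the energy identity \eqref{EnergyIdentity2}: the gradient convergence just established gives $\Phi(\bv^{k_j}(T))\to\Phi(\bv(T))$ along a further subsequence, and after extracting a numerical subsequence along which $\Phi(\bg^{k_j})$ converges, the energy identity pins down $\lim_j\int_0^T\!\int D\psi(\bv^{k_j}_t)\cdot \bv^{k_j}_t$. Comparing with the corresponding energy identity for $\bv$ (which applies once $\chi=DF(D\bv)$ is known and $\eta=D\psi(\bv_t)$ is obtained from the Minty inequality plus strict convexity of $\psi$) produces $\int\!\int(D\psi(\bv^{k_j}_t)-D\psi(\bv_t))\cdot(\bv^{k_j}_t-\bv_t)\,dx\,dt\to 0$, hence strong $L^p(\Omega\times(0,T))$ convergence by Vitali. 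Passing to the limit in \eqref{WeakSolnCond} then confirms $\bv$ is a weak solution with initial datum $\bg$. The delicate step is precisely this last one: the initial data converge only weakly, so $\Phi(\bg^{k_j})$ need not converge to $\Phi(\bg)$, and absorbing the potential initial-layer defect into the Minty identity (so that the argument still closes uniformly in time) is the main obstacle.
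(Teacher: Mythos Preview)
Your argument for the uniform bounds, the $C([0,T];L^p)$ convergence, and the strong convergence of $D\bv^{k_j}$ is correct and is a standard Minty variant of what the paper does with the convexity inequality \eqref{ConvexMakesStrong}. The gap is exactly where you flag it, and it is genuine: you do not actually justify $\Phi(\bv^{k_j}(T))\to\Phi(\bv(T))$ (convergence in $L^r(0,T;W^{1,p}_0)$ only gives strong $W^{1,p}_0$ convergence for \emph{a.e.} $t$, not at the specific endpoint $T$), and even if you had it, the initial-data term $\Phi(\bg^{k_j})$ goes the wrong way (weak l.s.c.\ gives $\liminf\Phi(\bg^{k_j})\ge\Phi(\bg)$, not $\le$), so you cannot close the Minty inequality on $[0,T]$.

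The paper's remedy is simple and you are one step from it. Once you know $\chi=DF(D\bv)$, passing to the limit in \eqref{WeakSolnCond} gives $-\eta\in\partial\Phi(\bv(\cdot,t))$, so the chain rule (Lemma~\ref{EnergyLemma}) yields an energy identity for the limit,
\[
\int_{t_0}^{t_1}\!\!\int_\Omega \eta\cdot\bv_t\,dx\,dt+\Phi(\bv(\cdot,t_1))=\Phi(\bv(\cdot,t_0)),
\]
valid for \emph{all} $0\le t_0\le t_1\le T$. Now choose $t_0,t_1$ from the full-measure set of times where $\bv^{k_j}(\cdot,t)\to\bv(\cdot,t)$ strongly in $W^{1,p}_0$; then both boundary terms in the energy identity for $\bv^{k_j}$ on $[t_0,t_1]$ converge. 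Writing $D\psi(w)\cdot w=\psi(w)+\psi^*(D\psi(w))$ and using weak lower semicontinuity of each convex piece separately (with $\bv^{k_j}_t\rightharpoonup\bv_t$ and $D\psi(\bv^{k_j}_t)\rightharpoonup\eta$) gives
\[
\int_{t_0}^{t_1}\!\!\int_\Omega\bigl(\psi^*(\eta)+\psi(\bv_t)\bigr)\,dx\,dt\le \Phi(\bv(\cdot,t_0))-\Phi(\bv(\cdot,t_1))=\int_{t_0}^{t_1}\!\!\int_\Omega \eta\cdot\bv_t\,dx\,dt.
\]
Fenchel--Young forces equality a.e., hence $\eta=D\psi(\bv_t)$; equality in the liminf then gives $\int\psi(\bv^{k_j}_t)\to\int\psi(\bv_t)$ and hence strong $L^p$ convergence of $\bv^{k_j}_t$. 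The point is that by moving the left endpoint from $0$ to a generic $t_0$ you never need $\Phi(\bg^{k_j})\to\Phi(\bg)$ at all.
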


\begin{proof}
1. The assumption that $\bg^k$ is bounded in $W^{1,p}_0(\Omega;\R^m)$ gives
\begin{equation}\label{UnifEnergyBound}
\sup_{k\in\N}\left\{\int^T_0\int_\Omega |\bv^k_t(x,t)|^pdxdt+\max_{0\le t\le T}\int_\Omega |D\bv^k(x,t)|^pdx
\right\}<\infty.
\end{equation}
It follows that $\{\bv^k\}_{k\in \N}\subset C([0,T]; L^p(\Omega; \R^m))$ is equicontinuous and pointwise bounded in \\ 
$W^{1,p}_0(\Omega;\R^m)$.  An abstract version of the Arzel\`{a}-Ascoli theorem, as detailed by J. Simon \cite{Simon}, implies that there is a subsequence $\{\bv^{k_j}\}_{k\in \N}$
converging to some $\bv \in C([0,T]; L^p(\Omega; \R^m))$.  Moreover, $\bv^{k_j}(\cdot, t)$ converges to 
$\bv(\cdot,t)$ weakly in $W^{1,p}_0(\Omega;\R^m)$ for each $t\in [0,T]$ and we may assume $\bv_t^{k_j}\rightharpoonup \bv_t$ in 
$L^p(\Omega\times [0,T]; \R^m).$ By \eqref{GrowthCond}, $D\psi(\bv^k_t)$ is bounded in $L^q(\Omega\times [0,T]; \R^m);$ so we may also assume that $D\psi(\bv^{k_j}_t)\rightharpoonup \Theta$. 

\par 2.  Notice that for each interval $[t_1, t_2]\subset [0,T]$ and $\bw \in W^{1,p}_0(\Omega;\R^m)$
\begin{align}\label{ConvexMakesStrong}
\int^{t_2}_{t_1}\int_\Omega F(D\bw(x))dxdt &\ge \int^{t_2}_{t_1}\int_\Omega F(D\bv^{k_j}(x,t))dxdt   \nonumber \\
&\hspace{1in}+ \int^{t_2}_{t_1}\int_\Omega DF(D\bv^{k_j}(x,t))\cdot \left(D\bw(x)-D\bv^{k_j}(x,t)\right)dxdt \nonumber \\
& = \int^{t_2}_{t_1}\int_\Omega F(D\bv^{k_j}(x,t))dxdt \nonumber \\
&\hspace{1in} - \int^{t_2}_{t_1}\int_\Omega D\psi(\bv^{k_j}_t(x,t))\cdot (\bw(x)-\bv^{k_j}(x,t))dxdt.
\end{align}
Sending $j\rightarrow\infty$ gives 
\begin{equation}\label{ThetaEq}
\Theta=\Div DF(D\bv). \nonumber
\end{equation}
In particular, we can adapt the proof of Lemma \ref{EnergyLemma} to conclude
\begin{equation}\label{ConvEq1}
\int^t_s\int_\Omega \Theta(x,\tau)\cdot\bv_t(x,\tau)dxd\tau + \int_\Omega F(D\bv(x,t))dx = \int_\Omega F(D\bv(x,s))dx 
\end{equation} 
for each $0\le s\le t\le T$.
\par 3. Choosing $\bw=\bv(\cdot, t)$ in \eqref{ConvexMakesStrong} and sending $j\rightarrow \infty$ leads to
$$
\int^{t_2}_{t_1}\int_\Omega F(D\bv(x,t))dxdt\ge\limsup_{j\rightarrow\infty}\int^{t_2}_{t_1}\int_\Omega F(D\bv^{k_j}(x,t))dxdt.
$$
Also note that since $F$ is convex, Fatou's lemma and weak convergence imply 
$$
\int^{t_2}_{t_1}\int_\Omega F(D\bv(x,t))dxdt\le\liminf_{j\rightarrow\infty}\int^{t_2}_{t_1}\int_\Omega F(D\bv^{k_j}(x,t))dxdt. 
$$
In view of the growth estimate \eqref{GrowthCond} and the strict convexity of $F$, we conclude that (up to a subsequence) $D\bv^{k_j}\rightarrow D\bv$ in $L^p(\Omega\times[0,T]; \R^m)$. 
Moreover, the uniform energy bound \eqref{UnifEnergyBound} implies for each $r>p$ there is a constant $C$ independent of $k_j$ for which
$$
\int^T_0|\bv^{k_j}(\cdot,t)-\bv(\cdot,t)|^{r}_{W^{1,p}_{0}(\Omega;\R^m)}dt\le 
C\int^T_0|\bv^{k_j}(\cdot,t)-\bv(\cdot,t)|^p_{W^{1,p}_{0}(\Omega;\R^m)}dt.
$$
Hence, $\bv^{k_j}\rightarrow \bv $ in $ L^r([0,T]; W^{1,p}_0(\Omega; \R^m))$.

\par 4. It follows that there is a subsequence of $\bv^{k_j}(\cdot, t)$ (not relabeled) converging to $\bv(\cdot, t)$ in $W^{1,p}_0(\Omega;\R^m)$ for Lebesgue almost every $t\in [0,T]$.
Let $0\le t_0\le t_1\le T$ be two such times. From Lemma \ref{EnergyLemma}, 
\begin{equation}\label{EnergyK}
\int^{t_1}_{t_0}\int_\Omega(\psi^*(D\psi(\bv^{k_j}_t)) + \psi(\bv^{k_j}_t))dxdt + \int_\Omega F(D\bv^{k_j}(x,t_1))dx= \int_\Omega F(D\bv^{k_j}(x,t_0))dx.\nonumber
\end{equation}
Letting $j\rightarrow\infty$ gives
\begin{equation}\label{ConvEq2}
\int^{t_1}_{t_0}\int_\Omega(\psi^*(\Theta) + \psi(\bv_t))dxds + \int_\Omega F(D\bv(x,t_1))dx\le  \int_\Omega F(D\bv(x,t_0))dx.
\end{equation}
Comparing \eqref{ConvEq1} and \eqref{ConvEq2}, we deduce 
$$
\Theta=D\psi(\bv_t)
$$
by employing the strict convexity of $\psi$.  Equality now holds in \eqref{ConvEq2} and thus
$$
\liminf_{j\rightarrow\infty}\int^{t_1}_{t_0}\int_\Omega \psi(\bv^{k_j}_t(x,t))dxdt=\int^{t_1}_{t_0}\int_\Omega \psi(\bv_t(x,t))dxdt.
$$
Appealing to the convexity of $\psi$ again, we may pass to yet another subsequence (not relabeled) such that $\bv_t^{k_j}\rightarrow \bv_t$ in $L^p(\Omega\times [0,T]; \R^m).$ 
\end{proof}

\begin{rem}
See Corollary \ref{H2Compactness} for a refinement of Theorem \ref{BlowLem} when $p=2$. 
\end{rem}


\section{Implicit time scheme}\label{ITS}
As noted in the introduction, the existence of solutions of the initial value problem \eqref{mainIVP} has been established \cite{Arai, Colli2, Colli, Mielke}.  These results all employ some sort of compactness to produce a solution by passing to an appropriate limit in an associated implicit time scheme.  We sketch how to argue analogously for the initial value problem \eqref{mainIVP} by making use of Theorem \ref{BlowLem}; for this solution $\bv$, we also show the integral $t\mapsto \int_\Omega\psi^*(D\psi(\bv_t(x,t)))dx$ is nonincreasing as detailed in Proposition \ref{NewDecrease}.  Then we focus on scalar equations and show that while uniqueness may fail, there is always a special weak solution when $m=1$. This was inspired by 
some observations we made in our previous joint work \cite{HyndLindgren}.

\par   A natural way to generate solutions of \eqref{mainIVP} is by the {\it implicit time scheme}: $\bv^0=\bg$
\begin{equation}\label{IFT}
\begin{cases}
D\psi\left(\frac{\bv^k - \bv^{k-1}}{\tau}\right)=\Div DF(D\bv^k), \quad & x\in \Omega\\
\hspace{.85in}\bv^k=0,\quad & x\in \partial \Omega
\end{cases},
\end{equation}
for  $ k=1,\dots, N$.  Here $\tau>0$ and \eqref{IFT} holds in the weak sense: 
\begin{equation}\label{WeakIFT}
\int_\Omega D\psi\left(\frac{\bv^k(x) - \bv^{k-1}(x)}{\tau}\right)\cdot  \bw(x)dx + \int_\Omega DF(D\bv^k(x))\cdot D\bw(x)dx=0, 
\end{equation}
for each $\bw\in W^{1,p}_0(\Omega; \R^m)$ and $k=1,\dots, N$.

Observe that once $\bv^1,\dots, \bv^{k-1}$ are determined, $\bv^k$ can be chosen as the unique minimizer of the strictly convex functional 
$$
W^{1,p}_0(\Omega; \R^m)\ni\bu\mapsto \int_\Omega \left\{F(D\bu) +\tau \psi\left(\frac{\bu - \bv^{k-1}}{\tau}\right)\right\}dx.
$$
Sometimes we write $\bv^k_\tau$ to denote the dependence on the small parameter $\tau$.  Choosing $\bw=\bv^k-\bv^{k-1}$ in \eqref{WeakIFT} and using the convexity of $F$ gives
$$
\int_\Omega D\psi\left(\frac{\bv^k(x) - \bv^{k-1}(x)}{\tau}\right)\cdot  (\bv^k(x) - \bv^{k-1}(x))dx + \int_\Omega F(D\bv^k(x))dx\le \int_\Omega F(D\bv^{k-1}(x))dx. 
$$
Summing over $k=1,\dots, j\le N$ yields 
\begin{equation}\label{DiscreteEnergy}
\sum^j_{k=1}\int_\Omega D\psi\left(\frac{\bv^k(x) - \bv^{k-1}(x)}{\tau}\right)\cdot (\bv^k(x) - \bv^{k-1}(x))dx + \int_\Omega F(D\bv^j(x))dx\le \int_\Omega F(D\bg(x))dx
\end{equation}
which is a discrete version of identity \eqref{EnergyIdentity2}.

\par Let us now deduce a discrete version of Proposition \ref{NewDecrease}. Observe 
\begin{align*}
\int_\Omega \psi^*\left(D\psi\left(\frac{\bv^k(x)-\bv^{k-1}(x)}{\tau}\right)\right)dx&=\int_\Omega\left\{D\psi\left(\frac{\bv^k(x)-\bv^{k-1}(x)}{\tau} \right)\cdot \frac{\bv^k(x)-\bv^{k-1}(x)}{\tau} \right. \\
&\quad \left. - \quad \psi\left(\frac{\bv^k(x)-\bv^{k-1}(x)}{\tau}\right) \right\}dx\\
&=\int_\Omega\left\{-DF(D\bv^k(x))\cdot \frac{D\bv^k(x)-D\bv^{k-1}(x)}{\tau} \right. \\
&\quad \quad \left. - \quad \psi\left(\frac{\bv^k(x)-\bv^{k-1}(x)}{\tau}\right) \right\}dx.
\end{align*}
Thus, for $k =2,\dots, N$, 
\begin{align*}
Q:&=\int_\Omega \psi^*\left(D\psi\left(\frac{\bv^k(x)-\bv^{k-1}(x)}{\tau}\right)\right)dx-\int_\Omega \psi^*\left(D\psi\left(\frac{\bv^{k-1}(x)-\bv^{k-2}(x)}{\tau}\right)\right)dx \\
   & = \int_\Omega\left\{ - D\psi\left(\frac{\bv^{k-1}(x)-\bv^{k-2}(x)}{\tau} \right)\cdot \frac{\bv^{k-1}(x)-\bv^{k-2}(x)}{\tau}  \right. \\
   & \left.\hspace{1in}-DF(D\bv^k(x))\cdot \frac{D\bv^k(x)-D\bv^{k-1}(x)}{\tau} \right. \\
  & \quad +\left.\psi\left(\frac{\bv^{k-1}(x)-\bv^{k-2}(x)}{\tau}\right) -\psi\left(\frac{\bv^k(x)-\bv^{k-1}(x)}{\tau}\right) \right\}dx\\
 & \le  \int_\Omega\left\{ - D\psi\left(\frac{\bv^{k-1}(x)-\bv^{k-2}(x)}{\tau} \right)\cdot \frac{\bv^{k-1}(x)-\bv^{k-2}(x)}{\tau}  \right. \\
   & \left.\hspace{1in}-DF(D\bv^{k-1}(x))\cdot \frac{D\bv^k(x)-D\bv^{k-1}(x)}{\tau} \right. \\
  & \quad +\left.\psi\left(\frac{\bv^{k-1}(x)-\bv^{k-2}(x)}{\tau}\right) -\psi\left(\frac{\bv^k(x)-\bv^{k-1}(x)}{\tau}\right) \right\}dx\\
   & = \int_\Omega\left\{ D\psi\left(\frac{\bv^{k-1}(x)-\bv^{k-2}(x)}{\tau} \right)\cdot \left(\frac{\bv^{k}(x)-\bv^{k-1}(x)}{\tau} - \frac{\bv^{k-1}(x)-\bv^{k-2}(x)}{\tau} \right)  \right. \\
  & \quad +\left.\psi\left(\frac{\bv^{k-1}(x)-\bv^{k-2}(x)}{\tau}\right) -\psi\left(\frac{\bv^k(x)-\bv^{k-1}(x)}{\tau}\right) \right\}dx\\
  &\le 0.
\end{align*}
Here we have used convexity of both $\psi$ and $F$.  Therefore, 
\begin{equation}\label{DiscreteNewDecrease}
\int_\Omega \psi^*\left(D\psi\left(\frac{\bv^k(x)-\bv^{k-1}(x)}{\tau}\right)\right)dx\le \int_\Omega \psi^*\left(D\psi\left(\frac{\bv^{k-1}(x)-\bv^{k-2}(x)}{\tau}\right)\right)dx
\end{equation}
for each $k=2,\dots, N$. 
\par Now set $\tau:=T/N$ and $\tau_k:=k\tau$ for $k=0,1,\dots, N$.  Also define  
\begin{equation}\label{StepApprox}
\begin{cases}
\bv_N(x, t)=\bg(x), \hspace{.23in} t=0\\
\hspace{.6in}                 =\bv^{k}(x), \quad \tau_{k-1}< t \le  \tau_k
\end{cases}
\end{equation}
and 
$$
\bu_N(x, t)  = \bv^{k-1}(x) +\frac{t-\tau_{k-1}}{\tau}(\bv^k(x)-\bv^{k-1}(x)), \quad \tau_{k-1}\le t \le \tau_k.
$$
Note that for $t\in [0,T]\setminus\{\tau_0,\tau_1,\dots,\tau_N\}$, \eqref{WeakIFT} can now be rewritten 
$$
\int_\Omega D\psi\left(\partial_t \bu_N(x,t)\right)\cdot  \bw(x)dx + \int_\Omega DF(D\bv_N(x,t))\cdot D\bw(x)dx=0, 
$$
and inequality \eqref{DiscreteEnergy} implies 
$$
\sup_{N\in \N}\left\{\int^T_0\int_\Omega |\partial_t\bu_N(x,t)|^pdxdt+\max_{0\le t\le T}\int_\Omega |D\bv_N(x,t)|^pdx\right\}<\infty.
$$ 
Moreover, \eqref{DiscreteNewDecrease} gives that
$$
[0,T]\ni t\mapsto\int_\Omega \psi^*(D\psi(\partial_t\bu_N(x,t)))dx
$$
is nonincreasing. 
\par We will omit the proof of the following claim as it is similar to the 
proof of Theorem \ref{BlowLem}.  

\begin{prop}\label{ExistProp}
Assume $T>0$ and $\bg \in W^{1,p}_0(\Omega; \R^m)$. There are subsequences $\{\bv_{N_j}\}_{j\in \N}$ and $\{\bu_{N_j}\}_{j\in \N}$ that both converge to a weak solution $\bv$ as described in 
\eqref{weakConv1}; the subsequence $\{\partial_t\bu_{N_j}\}_{j\in \N}$ also converges to $\bv_t$ as described in \eqref{StrongConv}. Moreover, $\bv$ is a weak solution of the initial value problem \eqref{mainIVP} and 
\begin{equation}\label{RigorNewDecrease}
\int_\Omega\psi^*(D\psi(\bv_t(x,t_2)))dx\le \int_\Omega\psi^*(D\psi(\bv_t(x,t_1)))dx
\end{equation}
for Lebesgue almost every $(t_1,t_2)\in [0,T]\times [0,T]$ with $t_1\le t_2$. 
\end{prop}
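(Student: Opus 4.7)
The plan is to follow closely the compactness argument from Theorem~\ref{BlowLem}, applied to the interpolants $\bv_N$ and $\bu_N$ in place of a sequence of genuine weak solutions. The discrete energy bound \eqref{DiscreteEnergy}, combined with the coercivity assumption \eqref{Coercive}, gives
\[
\sup_{N\in\N}\left\{\int_0^T\!\!\int_\Omega |\partial_t \bu_N|^p\,dx\,dt + \max_{0\le t\le T}\int_\Omega |D\bv_N|^p\,dx\right\} < \infty,
\]
and a direct computation on each subinterval $(\tau_{k-1},\tau_k]$ yields the identity $\bu_N-\bv_N=(t-\tau_k)\partial_t\bu_N$, so $\|\bu_N-\bv_N\|_{L^p(\Omega\times[0,T];\R^m)}\to 0$ as $\tau\to 0$. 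Hence the two sequences are forced to share any limit in this topology.

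Next I would apply the Simon-type Arzel\`a--Ascoli theorem used in step~1 of Theorem~\ref{BlowLem} to extract a subsequence with $\bu_{N_j}\to \bv$ in $C([0,T];L^p(\Omega;\R^m))$, $\bu_{N_j}(\cdot,t)\rightharpoonup \bv(\cdot,t)$ in $W^{1,p}_0(\Omega;\R^m)$ for each $t$, $\partial_t\bu_{N_j}\rightharpoonup \bv_t$ in $L^p$, and (via \eqref{GrowthCond}) $D\psi(\partial_t\bu_{N_j})\rightharpoonup \Theta$ in $L^q$. Applying the convexity comparison from step~2 of Theorem~\ref{BlowLem} to $\bv_N$ on any $[t_1,t_2]\subset[0,T]$ with an arbitrary test $\bw$, invoking \eqref{WeakIFT}, and using $\bv_N-\bu_N\to 0$ in $L^p$ to absorb the shift between the spatial and time-difference arguments, one identifies $\Theta=\Div DF(D\bv)$ upon passing to the limit. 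Choosing $\bw=\bv(\cdot,t)$ and exploiting the strict convexity of $F$ upgrades $D\bv_{N_j}\to D\bv$ to strong $L^p$ convergence. Finally, comparing the energy identity derived for $\bv$ through Lemma~\ref{EnergyLemma} with the $j\to\infty$ limit of \eqref{DiscreteEnergy} forces $\Theta=D\psi(\bv_t)$ and upgrades $\partial_t\bu_{N_j}\to \bv_t$ to strong convergence in $L^p(\Omega\times[0,T];\R^m)$, exactly as in step~4 of Theorem~\ref{BlowLem}. The initial condition $\bv(\cdot,0)=\bg$ follows from $\bu_N(\cdot,0)=\bg$ and the $C([0,T];L^p)$ convergence.

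For \eqref{RigorNewDecrease}, the strong $L^p(\Omega\times[0,T])$ convergence $\partial_t\bu_{N_j}\to \bv_t$ permits, after passing to a further subsequence, pointwise-in-time strong convergence $\partial_t\bu_{N_j}(\cdot,t)\to \bv_t(\cdot,t)$ in $L^p(\Omega;\R^m)$ for $t$ in a full-measure set $E\subset[0,T]$. For such $t$, the growth condition \eqref{GrowthCond} together with the identity $\psi^*(D\psi(w))=D\psi(w)\cdot w-\psi(w)$ provides equi-integrability sufficient to conclude
\[
\int_\Omega \psi^*(D\psi(\partial_t\bu_{N_j}(x,t)))\,dx \longrightarrow \int_\Omega \psi^*(D\psi(\bv_t(x,t)))\,dx.
\]
Since \eqref{DiscreteNewDecrease} makes the left-hand side nonincreasing in $t$ for each $j$, the limit inherits monotonicity on $E$, which is precisely \eqref{RigorNewDecrease}.

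The main technical obstacle I anticipate is the identification step in the second paragraph: rigorously passing from the discrete Euler--Lagrange relation \eqref{WeakIFT}, which couples $D\bv_N$ and $D\psi(\partial_t\bu_N)$ evaluated on staggered time arguments, to a clean distributional identity for $\bv$ without losing control of either nonlinear term in the limit. Once this is in hand, the rest of the argument is a bookkeeping adaptation of Theorem~\ref{BlowLem}.
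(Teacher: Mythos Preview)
Your proposal is correct and follows essentially the same approach as the paper, which in fact omits the proof entirely, stating only that it is ``similar to the proof of Theorem~\ref{BlowLem}'' and noting in the subsequent remark that \eqref{RigorNewDecrease} ``follows from inequality \eqref{DiscreteNewDecrease} and the convergence \eqref{StrongConv}.'' Your outline fills in precisely the adaptations one would expect---the shift between $\bv_N$ and $\bu_N$, the identification of $\Theta$ from the discrete weak formulation, and the passage to the limit in the discrete monotonicity via strong $L^p$ convergence of $\partial_t\bu_{N_j}$---and correctly flags the staggered-argument identification as the only genuinely new wrinkle relative to Theorem~\ref{BlowLem}.
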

\begin{rem}
We emphasize that we do not know if {\it every} weak solution satisfies inequality \eqref{RigorNewDecrease}. However, any solution arising via the implicit scheme will satisfy \eqref{RigorNewDecrease}; this follows from inequality \eqref{DiscreteNewDecrease} and the convergence \eqref{StrongConv}.

\end{rem}

\begin{cor}
Let $\bv$ be a weak solution described in Proposition \ref{ExistProp}. Then
$$
\bv_t\in L^\infty_{loc}((0,T); L^p(\Omega;\R^m)).
$$
\end{cor}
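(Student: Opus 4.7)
The plan is to combine the essential monotonicity of
\[
h(t) := \int_\Omega \psi^*(D\psi(\bv_t(x,t)))dx
\]
asserted by \eqref{RigorNewDecrease} with its integrability $h \in L^1(0,T)$, and then to convert an $L^\infty$ bound on $h$ into an $L^\infty$ bound on $\int_\Omega |\bv_t|^p dx$ via a pointwise estimate $\psi^*(D\psi(w)) \ge c|w|^p - C$. Given any compact interval $[a,b] \subset (0,T)$ with $a > 0$, the strategy is to select $t_0 \in (0,a)$ at which $h(t_0) < \infty$, propagate $h(t) \le h(t_0)$ for a.e.\ $t \ge t_0$ via \eqref{RigorNewDecrease}, and then conclude the essential $L^p$-bound on $\bv_t(\cdot,t)$ for $t \in [a,b]$.

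For the integrability of $h$, since $\psi \ge 0$ (WLOG, per the paper's convention) the Fenchel equality $\psi^*(D\psi(w)) + \psi(w) = D\psi(w)\cdot w$ forces $\psi^*(D\psi(\bv_t)) \le D\psi(\bv_t)\cdot \bv_t$; integrating in $(x,t)$ and invoking \eqref{EnergyIdentity2} gives $\int_0^T h(t)dt \le \int_\Omega F(D\bg)dx < \infty$. To extract the required pointwise control from the a.e.\ monotonicity in \eqref{RigorNewDecrease}, I apply Fubini to the null set of bad pairs $(t_1,t_2)$: this produces a full-measure set $E \subset (0,T)$ with the property that for every $t_1 \in E$ with $h(t_1) < \infty$, $h(t) \le h(t_1)$ for a.e.\ $t \ge t_1$. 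Any $t_0 \in E \cap (0,a)$ with $h(t_0) < \infty$ then yields $h(t) \le h(t_0)$ for a.e.\ $t \in [a,b]$.

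The main obstacle is the pointwise lower bound $\psi^*(D\psi(w)) \ge c|w|^p - C$, which I plan to establish through convex duality using both coercivity and growth. Integrating \eqref{GrowthCond} gives $\psi(w) \le C(|w|^p + 1)$, and taking the Legendre transform forces the $p'$-coercivity $\psi^*(\xi) \ge c|\xi|^{p'} - C$ with $p' = p/(p-1)$. Separately, convexity gives $D\psi(w)\cdot w \ge \psi(w) - \psi(0)$, which combined with \eqref{Coercive} produces $|D\psi(w)| \ge (\gamma/2)|w|^{p-1}$ for $|w|$ sufficiently large, hence $|D\psi(w)|^{p'} \ge c|w|^p - C$ globally. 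Substituting yields the desired lower bound, and the corollary follows since
\[
\int_\Omega |\bv_t(x,t)|^p dx \le c^{-1}(h(t) + C|\Omega|) \le c^{-1}(h(t_0) + C|\Omega|)
\]
for a.e.\ $t \in [a,b]$, which is exactly the $L^\infty_{\text{loc}}((0,T); L^p)$ bound.
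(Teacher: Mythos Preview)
Your proof is correct and follows essentially the same approach as the paper's: pick $t_0$ near zero where $h(t_0)<\infty$, propagate the bound via \eqref{RigorNewDecrease}, and convert control of $\psi^*(D\psi(\bv_t))$ into control of $|\bv_t|^p$ via the pointwise estimate $|w|^p\le C(\psi^*(D\psi(w))+1)$. Your treatment is in fact more careful than the paper's in two respects: you justify via Fubini that one can choose $t_0$ both in the ``good'' set for \eqref{RigorNewDecrease} and with $h(t_0)<\infty$, and you spell out the derivation of the pointwise estimate from \eqref{Coercive} and \eqref{GrowthCond}, which the paper dismisses as ``elementary arguments.''
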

\begin{proof}
Let $\delta\in (0,T)$ and choose a time $t_0\in (0,\delta)$ for which $\bv_t(\cdot, t_0)\in L^p(\Omega;\R^m)$.  By the previous proposition, 
\begin{equation}\label{MontoneTee1}
\int_\Omega\psi^*(D\psi(\bv_t(x,t)))dx\le \int_\Omega\psi^*(D\psi(\bv_t(x,t_0)))dx
\end{equation}
for almost every $t\in [t_0,T]$. By elementary arguments, the bounds \eqref{Coercive} and \eqref{GrowthCond} imply there is a universal constant $C$ such that $|w|^p\le C\left(\psi^*(D\psi(w))+1\right)$ for 
each $w\in \R^m$.  It now follows from \eqref{MontoneTee1} that $\int_\Omega |\bv_t(x,t)|^pdx\le C \int_\Omega\psi^*(D\psi(\bv_t(x,t_0)))dx + C|\Omega|$ for almost every $t\in [t_0,T]$. 
\end{proof}

\par We now consider the implicit time scheme when $m=1$.  Recall in this case the system \eqref{mainIVP} reduces to 
\begin{equation}\label{scalarIVP2}
\begin{cases}
\psi'(v_t)=\Div DF(Dv), &\quad \Omega\times(0,T) \\
\hspace{.33in}v =0, &\quad \partial\Omega\times [0,T)  \\ 
\hspace{.33in}v =g, &\quad \Omega\times\{0\}
\end{cases}.
\end{equation}
Here $\psi\in C^1(\R)$ and $F\in C^1(\R^n)$. Moreover, this equation is a nonlinear parabolic equation for a scalar function $v: \Omega\times (0,T)\rightarrow \R$.  It is then natural to expect 
that the theory of viscosity solutions applies as presented in \cite{CIL, FS}.  

\par To this end, we will additionally assume $p\in [2,\infty)$, $g\in C(\overline{\Omega})\cap  W^{1,p}_0(\Omega)$ and 
\begin{equation}\label{FCtwo}
F\in C^2(\R^n).
\end{equation}
The regularity assumption \eqref{FCtwo} allows us to rewrite the PDE in \eqref{scalarIVP2} as
\begin{equation}\label{scalarPDE}
\psi'(v_t)=\Div DF(Dv)=D^2F(Dv)\cdot D^2v.
\end{equation}
This minor observation will be useful to us when considering viscosity solutions. 
\begin{defn}
$v\in USC(\Omega\times (0,T))$ is a {\it viscosity subsolution} of \eqref{scalarPDE} if 
$$
\psi'(\phi_t(x_0,t_0))\le D^2F(D\phi(x_0,t_0))\cdot D^2\phi(x_0,t_0)
$$
whenever $\phi\in C^\infty(\Omega\times (0,T))$ and $v-\phi$ has a local maximum at $(x_0,t_0)\in \Omega\times (0,T)$. Likewise, $v\in LSC(\Omega\times (0,T))$ is a {\it viscosity supersolution}
if 
$$
\psi'(\phi_t(x_0,t_0))\ge D^2F(D\phi(x_0,t_0))\cdot D^2\phi(x_0,t_0)
$$
whenever $\phi\in C^\infty(\Omega\times (0,T))$ and $v-\phi$ has a local minimum at $(x_0,t_0)\in \Omega\times (0,T)$. $v\in C(\Omega\times (0,T))$ is a {\it viscosity solution} if it is both a viscosity 
sub- and supersolution. 
\end{defn}
Viscosity solutions of the implicit scheme \eqref{IFT} when $m=1$ are defined analogously. We will also make the assumption that there are positive numbers $\theta, \Theta$ for which
\begin{equation}\label{RegAssump}
\theta |M|^{p-2}|\xi|^2\le D^2F(M)\xi\cdot \xi\le \Theta |M|^{p-2}|\xi|^2, \quad M,\xi\in \R^n.
\end{equation}
Under \eqref{FCtwo} and \eqref{RegAssump}, any weak solution $w\in W^{1,p}_0(\Omega)$ of the PDE 
$$
-\Div DF(Dw) = f,  \quad x\in \Omega\\
$$
is necessarily {\it continuous} provided $f\in L^\infty(\Omega)$; see \cite{DB} for precise estimates. 

\par The uniqueness of viscosity solutions of \eqref{scalarIVP2} follows from well known methods (see section 8 of \cite{CIL} for instance). In particular, a proof can be constructed that is similar to the comparison for solutions of the heat equation, so we omit the required argument.  We do remark that the main structural condition needed is the strict monotonicity of  
$\psi'$. In order to prove Theorem \ref{ViscSolnResult}, the main result of this section, we first have to verify a few technical lemmas.
 
\begin{lem}\label{IFTSeqBound}
Let $\{v^1,\dots, v^N\}\subset  W^{1,p}_0(\Omega)$ denote the solution sequence of the implicit scheme \eqref{IFT} for $N\in \N$ and $\tau>0$. Then $v^1,\dots, v^N$ are  viscosity solutions and 
$$
\max_{1\le k \le N}\sup_\Omega|v^k|\le \sup_\Omega |g|.
$$ 
\end{lem}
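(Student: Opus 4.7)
The lemma contains two assertions: each iterate $v^k$ is a viscosity solution of the elliptic equation in \eqref{IFT}, and $\sup_\Omega |v^k| \le \sup_\Omega |g|$. I would prove both by induction on $k$, using that $v^k$ is the unique minimizer of the strictly convex functional
$$
J(u) := \int_\Omega F(Du) + \tau \psi\!\left(\tfrac{u - v^{k-1}}{\tau}\right)dx
$$
on $W^{1,p}_0(\Omega)$.

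For the $L^\infty$ bound, assume inductively that $\sup_\Omega |v^{k-1}| \le M := \sup_\Omega |g|$. I would test \eqref{WeakIFT} against $(v^k - M)^+ \in W^{1,p}_0(\Omega)$ (admissible since $v^k = 0 \le M$ on $\partial\Omega$) to obtain
$$
\int_{\{v^k > M\}} \psi'\!\left(\tfrac{v^k - v^{k-1}}{\tau}\right)(v^k - M)\,dx + \int_{\{v^k > M\}} DF(Dv^k) \cdot Dv^k\,dx = 0.
$$
On $\{v^k > M\}$, $(v^k - v^{k-1})/\tau > 0$ by the inductive hypothesis, so strict monotonicity of $\psi'$ combined with the natural normalization $\psi'(0) = 0$ makes the first integrand nonnegative, and convexity of $F$ together with $DF(0) = 0$ gives $DF(Dv^k) \cdot Dv^k \ge 0$. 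Strict convexity of $\psi$ then forces the set to have measure zero, so $v^k \le M$. The bound $v^k \ge -M$ follows symmetrically by testing against $-(v^k + M)^-$.

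For the viscosity solution property, the $L^\infty$ bound just obtained together with \eqref{GrowthCond} renders the right-hand side of
$$
-\Div DF(Dv^k) = -\psi'\!\left(\tfrac{v^k - v^{k-1}}{\tau}\right)
$$
bounded, so the DiBenedetto-type regularity theory for quasilinear elliptic equations under the structure assumption \eqref{RegAssump} yields $v^k \in C^{1,\alpha}_{\mathrm{loc}}(\Omega) \cap C(\overline{\Omega})$. I would then invoke the standard correspondence (in the spirit of Juutinen--Lindqvist--Manfredi) between continuous weak solutions and viscosity solutions: if $\phi \in C^2$, $v^k - \phi$ has a strict local maximum at $x_0$ with $v^k(x_0) = \phi(x_0)$, and the viscosity subsolution inequality fails there, then by continuity of $\phi$, $D\phi$, $D^2\phi$, and $v^{k-1}$ the strict inequality persists in a small ball $B_r(x_0)$. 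Choosing $\eta > 0$ below $\min_{\partial B_{r/2}}(\phi - v^k)$, the competitor $\tilde v$ defined as $\min(v^k, \phi - \eta)$ on $B_{r/2}(x_0)$ and as $v^k$ outside lies in $W^{1,p}_0(\Omega)$ and differs from $v^k$ on a set of positive measure near $x_0$. Testing the weak formulation against $(v^k - \phi + \eta)^+$ and comparing with the persistent strict inequality for $\phi$ (after integration by parts) yields $J(\tilde v) < J(v^k)$, contradicting minimality. The supersolution property is symmetric.

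\textbf{Main obstacle.} The delicate step is the viscosity-solution argument, where the nonlinear coupling through $v^{k-1}$ in the zeroth-order term must be tracked carefully to convert the persistent pointwise strict inequality into a strict decrease of $J$. The $L^\infty$ bound is clean provided one adopts the normalization $\psi'(0) = 0$, $DF(0) = 0$ (automatic in the motivating case $\psi(w) = |w|^p/p$, $F(M) = |M|^p/p$).
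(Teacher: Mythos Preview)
Your proposal is correct and follows essentially the same strategy as the paper, with minor differences in execution.

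For the $L^\infty$ bound, the paper argues by weak comparison: the constant $M=\sup_\Omega|g|$ is a weak supersolution of $\psi'((v^1-g)/\tau)=\Div DF(Dv^1)$ (since $\Div DF(0)=0$ and $\psi'((M-g)/\tau)\ge 0$), and comparison for this strictly monotone equation gives $v^1\le M$; the lower bound and the induction are identical to yours. Your truncation argument with the test function $(v^k-M)^+$ is the standard alternative route to the same conclusion and relies on the same normalizations $\psi'(0)=0$, $DF(0)=0$ that you flagged (the paper uses them implicitly).

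For the viscosity property, the paper also follows the Juutinen--Lindqvist--Manfredi template, but reaches the contradiction via the comparison principle for the operator $-\Div DF(D\cdot)$ rather than via the energy $J$: if the subsolution inequality fails at a strict local maximum $x_0$ of $v^k-\phi$, then on a small ball $B_\delta(x_0)$ one has $-\Div DF(D\phi)<\psi'((v^k-v^{k-1})/\tau)=-\Div DF(Dv^k)$ in the weak sense; setting $c=\max_{\partial B_\delta}(v^k-\phi)<(v^k-\phi)(x_0)$ and comparing $\phi+c$ with $v^k$ on $B_\delta$ yields $v^k\le \phi+c$, contradicting the value at $x_0$. This is slightly cleaner than your proposed route through $J(\tilde v)<J(v^k)$, since it avoids having to convert the pointwise strict inequality into a strict energy drop; but the two arguments are equivalent in content.
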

\begin{proof}
Let us consider the implicit scheme for $k=1$. The PDE 
\begin{equation}\label{kequal1eq}
\psi'\left(\frac{v^1-g}{\tau}\right)=\Div DF(Dv^1)
\end{equation}
admits a comparison principle among weak sub- and supersolutions as $\psi'$ and $DF$ are strictly monotone.  As $\sup_\Omega|g|$ is a supersolution of \eqref{kequal1eq} that is at least as large as $v^1$ on $\partial \Omega$, $v^1\le \sup_\Omega|g|$. Likewise, 
$v^1\ge -\sup_\Omega|g|$. By \eqref{FCtwo} and \eqref{RegAssump}, $v^1\in C(\Omega)$ and $|v^1|\le \sup_\Omega|g|$. By induction, we can make the same conclusion for the weak solutions $\{v^2,\dots, v^N\}$. 

\par Let us now verify $v^1$ is indeed a viscosity solution; here we will follow the approach of \cite{JLM2}.  Assume that $\phi\in C^\infty(\Omega)$ and that $v^1-\phi$ has a strict local maximum at $x_0\in \Omega$.  We assert 
$$
\psi'\left(\frac{v^1(x_0)-g(x_0)}{\tau}\right)\le D^2F(D\phi(x_0))\cdot D^2\phi(x_0)
$$
If not, by continuity there is $\delta>0$ such that 
$$
\begin{cases}
(v^1-\phi)(x)\le (v^1-\phi)(x_0)\\
\psi'\left(\frac{v^1-g}{\tau}\right)>\Div DF(D\phi)
\end{cases}, \quad x\in B_\delta(x_0).
$$
Set 
$$
c:=\max_{\partial B_\delta(x_0)}(v^1-\phi)
$$
and note $c<(v^1-\phi)(x_0).$  Observe
$$
\begin{cases}
-\Div DF(D(\phi+c))\ge - \Div DF(Dv^1), \quad &x\in B_\delta(x_0)\\
\hspace{1in} c+\phi \ge v^1, \quad &  x\in \partial B_\delta(x_0)\
\end{cases}.
$$
Thus, $c+\phi\ge v^1$ in  $B_\delta(x_0)$; in particular, $c\ge (v^1-\phi)(x_0)$ which is a contradiction.  It follows that $v^1$ is viscosity subsolution. The proof that $v^1$ is a supersolution is similar and left to the reader. 
\end{proof}

\begin{cor}\label{LemdiscreteVisc}
Let $N\in \N$. Further assume $\{\phi^0,\phi^1,\dots,\phi^N\}\subset C^\infty(\Omega)$ and $(x_0,k_0)\in\Omega\times\{1,\dots,N\}$ is such that
\begin{equation}\label{discreteVisc}
v^k(x)-\phi^k(x)\le v^{k_0}(x_0)-\phi^{k_0}(x_0) 
\end{equation}
for $x$ in a neighborhood of $x_0$ and $k\in \{k_0-1,k_0\}$. Then 
$$
\psi'\left(\frac{\phi^{k_0}(x_0) - \phi^{k_0-1}(x_0)}{\tau}\right)\le D^2F(D\phi^{k_0}(x_0))\cdot D^2\phi^{k_0}(x_0).
$$
\end{cor}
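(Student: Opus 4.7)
My plan is to combine the viscosity subsolution property of $v^{k_0}$ (established essentially in Lemma \ref{IFTSeqBound}) with a pointwise comparison of time differences coming from evaluating the hypothesis at the previous time step at the single point $x_0$, and to glue them using the strict monotonicity of $\psi'$. No barrier-type comparison on a ball is needed beyond what was already done to prove each $v^k$ is a viscosity solution.

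More concretely, first I would note that by Lemma \ref{IFTSeqBound} (extended to arbitrary $k$ by the induction mentioned in its proof), $v^{k_0}$ is a viscosity subsolution of
\[
\psi'\!\left(\frac{u - v^{k_0-1}}{\tau}\right) = D^2F(Du)\cdot D^2 u
\]
in $\Omega$, with $v^{k_0-1}\in C(\Omega)$ treated as a frozen coefficient. The hypothesis at $k=k_0$ says that $v^{k_0}-\phi^{k_0}$ has a local maximum at $x_0$, so applying the viscosity subsolution definition with test function $\phi^{k_0}$ gives
\[
\psi'\!\left(\frac{v^{k_0}(x_0) - v^{k_0-1}(x_0)}{\tau}\right) \le D^2F(D\phi^{k_0}(x_0))\cdot D^2\phi^{k_0}(x_0).
\]

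Next I would extract the pointwise information from the hypothesis at $k=k_0-1$. Evaluating inequality \eqref{discreteVisc} at the single point $x=x_0$ gives
\[
v^{k_0-1}(x_0) - \phi^{k_0-1}(x_0) \le v^{k_0}(x_0) - \phi^{k_0}(x_0),
\]
which rearranges to
\[
\frac{\phi^{k_0}(x_0) - \phi^{k_0-1}(x_0)}{\tau} \le \frac{v^{k_0}(x_0) - v^{k_0-1}(x_0)}{\tau}.
\]
Since $\psi$ is strictly convex, $\psi'$ is nondecreasing, and applying it to both sides preserves the inequality. Chaining with the bound from the previous paragraph then yields the claim. There is no real obstacle here; the mild subtlety is only to observe that the viscosity subsolution property for $v^{k_0}$ obtained in Lemma \ref{IFTSeqBound} does not require $v^{k_0-1}$ to be smooth, so it applies unchanged even though only $\phi^{k_0}$ plays the role of a smooth test function. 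The hypothesis at the level $k_0-1$ is used only pointwise, not as a smoothness assumption on $v^{k_0-1}$.
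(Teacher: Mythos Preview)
Your proof is correct and follows essentially the same approach as the paper: use the viscosity subsolution property of $v^{k_0}$ with test function $\phi^{k_0}$ from the hypothesis at $k=k_0$, then evaluate the hypothesis at $k=k_0-1$ and $x=x_0$ to compare the time differences, and conclude by the monotonicity of $\psi'$. Your added remark about $v^{k_0-1}$ not needing to be smooth is a helpful clarification but not a departure from the paper's reasoning.
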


\begin{proof}
Evaluating the left hand side \eqref{discreteVisc} at $k=k_0$ gives 
$$
\psi'\left(\frac{v^{k_0}(x_0) - v^{k_0-1}(x_0)}{\tau}\right)\le  D^2F(D\phi^{k_0}(x_0))\cdot D^2\phi^{k_0}(x_0),
$$
as $v^k$ is a viscosity solution of \eqref{IFT} for $m=1$.  Evaluating the left hand side of \eqref{discreteVisc} at $x=x_0$ and $k=k_0-1$ gives
$\phi^{k_0}(x_0) - \phi^{k_0-1}(x_0)\le v^{k_0}(x_0) - v^{k_0-1}(x_0)$. The claim follows from the above inequality and the monotonicity of 
$\psi'$. 
\end{proof} 
Let us now define the respective upper and lower limits 
$$
\overline{v}(x,t):=\limsup_{\substack{N\rightarrow\infty\\ (y,s)\rightarrow(x,t)}}v_N(y,s),
$$
$$
\underline{v}(x,t):=\liminf_{\substack{N\rightarrow\infty\\ (v,s)\rightarrow(x,t)}}u_N(y,s)
$$
of the sequence $(v_N)_{N\in\N}$ specified in \eqref{StepApprox}. The functions $\overline{v}, \underline{v}$ are sometimes termed the ``relaxed" limits 
of the sequence $(v_N)_{N\in\N}$ and were introduced by G. Barles and G. Perthame \cite{BarPer, BarPer2} to study convergence properties of viscosity solutions. 
By Lemma \ref{IFTSeqBound}, the sequence $(v_N)_{N\in\N}$ is locally bounded, independently of $N\in \N$.  Thus, $\overline{v}, \underline{v}$ are well defined and finite at each 
$(x,t)\in \Omega\times (0,T)$. Moreover, one checks $\overline{v}, -\underline{v}$ are upper semicontinuous and $\overline{v}= \underline{v}$ 
if and only if $v_N$ converges locally uniformly. The following lemma is proved in \cite{HyndLindgren} and is the last ingredient we will need to prove Theorem \ref{ViscSolnResult}. 
\begin{lem}\label{ApproxPoints}
Assume $\phi\in C^\infty(\Omega\times(0,T))\cap C(\overline{\Omega}\times[0,T])$.  For $N\in \N$ define 
$$
\phi_N(x,t):=
\begin{cases}
\phi(x,0), \quad (x,t)\in \Omega\times \{0\},\\
\phi(x,\tau_k), \quad (x,t)\in\Omega \times(\tau_{k-1}, \tau_k]\quad k=1,\dots,N
\end{cases}.
$$
Suppose $\overline{v}-\phi$ has a strict local maximum at $(x_0,t_0)\in \Omega\times(0,T)$. Then there is $(x_j,t_j)\rightarrow  (x_0,t_0)$ and $N_j\rightarrow \infty$, as $j\rightarrow \infty$, such that 
$v_{N_j} -\phi_{N_j}$ has local maximum at $(x_j,t_j)$. 
\end{lem}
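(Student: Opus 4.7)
The plan is to carry out the standard relaxed-semilimit argument of Barles and Perthame \cite{BarPer, BarPer2} in the present semidiscrete setting. First I would choose $r>0$ small enough that the closed cylinder $K:=\overline{B_r(x_0)}\times[t_0-r,t_0+r]$ is contained in $\Omega\times(0,T)$ and $(x_0,t_0)$ is the unique point of $K$ at which $\overline{v}-\phi$ attains its maximum. On $K$, the function $v_N-\phi_N$ takes only finitely many functional forms as $t$ varies: on each time slab $(\tau_{k-1},\tau_k]\cap[t_0-r,t_0+r]$ it coincides with the spatially continuous function $v^k-\phi(\cdot,\tau_k)$ (continuity of $v^k$ on $\overline{B_r(x_0)}$ comes from Lemma \ref{IFTSeqBound}). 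Hence $v_N-\phi_N$ attains its supremum on $K$ at some point $(x_N,t_N)$.

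Next I would compare this maximum with an approximating sequence for $\overline{v}(x_0,t_0)$. By the definition of the upper relaxed limit, there exist $N_i\to\infty$ and $(y_i,s_i)\to(x_0,t_0)$ with $v_{N_i}(y_i,s_i)\to\overline{v}(x_0,t_0)$. Uniform continuity of $\phi$ on $\overline{\Omega}\times[0,T]$ combined with $|\tau_k-t|\le T/N$ yields $\phi_N\to\phi$ uniformly on $K$, so the maximality of $(x_{N_i},t_{N_i})$ gives
$$
v_{N_i}(x_{N_i},t_{N_i})-\phi_{N_i}(x_{N_i},t_{N_i})\ \ge\ v_{N_i}(y_i,s_i)-\phi_{N_i}(y_i,s_i)\ \longrightarrow\ \overline{v}(x_0,t_0)-\phi(x_0,t_0).
$$
Extracting a further subsequence along which $(x_{N_i},t_{N_i})\to(x^*,t^*)\in K$ and using both $\limsup_i v_{N_i}(x_{N_i},t_{N_i})\le\overline{v}(x^*,t^*)$ (by the very definition of $\overline{v}$) and $\phi_{N_i}(x_{N_i},t_{N_i})\to\phi(x^*,t^*)$, I obtain
$$
\overline{v}(x^*,t^*)-\phi(x^*,t^*)\ \ge\ \overline{v}(x_0,t_0)-\phi(x_0,t_0).
$$
Strictness of the maximum forces $(x^*,t^*)=(x_0,t_0)$. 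In particular $(x_{N_i},t_{N_i})$ lies in the open interior of $K$ for every sufficiently large $i$, and is therefore a genuine local maximum of $v_{N_i}-\phi_{N_i}$. Relabelling the extracted subsequence as $(x_j,t_j)$ and $N_j$ completes the proof.

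The one delicate point is a mismatch of regularity: $v_N$ is merely right-continuous across its time jumps and $\phi_N$ is piecewise continuous in the same way, so a priori the upper semicontinuous envelope of $v_N-\phi_N$ need not be achieved at a single space-time point of $K$. The remedy is precisely the slicewise reduction carried out in the first step, which turns the sup over $K$ into a finite maximum of continuous-in-$x$ functions and thereby legitimizes the selection of $(x_N,t_N)$. With that in place, everything else is a routine packaging of the Barles--Perthame method.
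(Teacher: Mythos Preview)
Your argument is correct and is precisely the standard Barles--Perthame relaxed-limit argument adapted to the piecewise-constant-in-time structure of $v_N$ and $\phi_N$; the slicewise reduction you flag is the only nontrivial point, and you handle it properly. The paper itself does not prove this lemma but refers the reader to \cite{HyndLindgren}, so there is no ``paper's own proof'' to compare against---your self-contained argument is exactly the sort of proof one would expect to find there.
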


\begin{proof}[Proof of Theorem \ref{ViscSolnResult}] We only show that $\overline{v}$ is a viscosity subsolution of \eqref{scalarPDE}; similar arguments can be used to show  $\underline{v}$ is a supersolution of \eqref{scalarPDE}. 
By the comparison of viscosity solutions, we would then have $\overline{v}\le \underline{v}$. In this case, $\overline{v}=\underline{v}:=v$ is continuous and $v_N$ converges to $v$ locally uniformly. 
The theorem would then follow as each subsequence of $v_N(\cdot,t)$ has a further subsequence converging to a weak solution of \eqref{scalarPDE}  in $L^p(\Omega)$, uniformly in $[0,T]$, by Proposition \ref{ExistProp}.

\par Assume that $\phi\in C^\infty(\Omega\times(0,T))$ and $\overline{v}-\phi$ has a strict local maximum at $(x_0,t_0)\in \Omega\times(0,T)$. By Lemma \ref{ApproxPoints}, there are points 
$(x_j,t_j)$ converging to $(x_0,t_0)$ and $N_j\in \N$ tending to $+\infty$, as $j\rightarrow \infty$, such that $v_{N_j}-\phi_{N_j}$ has a local maximum at $(x_j,t_j)$.  Observe that for each $j\in \N$, 
$t_j\in (\tau_{k_j-1}, \tau_{k_j}]$ for some $k_j\in\{0,1,\dots, N_j\}$. Hence, by the definition of $v_{N_j}$ and $\phi_{N_j}$, 
$$
\Omega\times \{0,1,\dots,N_j\}\ni (x,k)\mapsto v^k(x)- \phi(x,\tau_{k})
$$
has a local maximum at $(x,k)=(x_j, k_j)$.  By Lemma \ref{LemdiscreteVisc}, 
$$
\psi'\left(\frac{ \phi(x_j,\tau_{k_j})-  \phi(x_j,\tau_{k_j-1})}{T/N_j}\right)\le  D^2F(D\phi(x_j,\tau_{k_j}))\cdot D^2\phi(x_j,\tau_{k_j}).
$$
As $\tau_{k_j-1}=\tau_{k_j}-T/N_j$ and $|t_j-\tau_{k_j}|\le T/N_j$ for $j\in \N$, we can send $j\rightarrow \infty$ above, appealing to the smoothness of $\phi$, and arrive at 
$$
\psi'(\phi_t(x_0,t_0))\le D^2F(D\phi(x_0,t_0))\cdot D^2\phi(x_0,t_0).
$$
\end{proof}

\begin{rem}
We anticipate that a version of Theorem \ref{ViscSolnResult} holds for all $p\in (1,\infty)$. In particular, 
we believe the methods described in \cite{JLM2} for the $p$-Laplace equation $-\Div(|Dv|^{p-2}Dv)=0$ in the range $p\in (1,2)$ can be adapted to establish such a generalization. 
\end{rem}


\section{Regularity}\label{RegSect}
We will now consider the interior regularity of weak solutions of the system \eqref{mainPDE}. By weak solutions of \eqref{mainPDE}, and not necessarily the initial value problem \eqref{mainIVP}, we mean
measurable mappings 
$\bv :\Omega\times (0,T)\rightarrow \R^m$ that satisfy 
\begin{equation}\label{NaturalSpace2}
\bv_t\in L^p(\Omega\times(0,T); \R^m), \quad \bv\in L^\infty([0,T]; W^{1,p}(\Omega;\R^m))\nonumber
\end{equation}
and the weak solution condition \eqref{WeakSolnCond}.  We further specialize to the case $p=2$ and assume 
\begin{equation}\label{minPSInow}
\inf_{\R^m}\psi=\psi(0)=0,
\end{equation}
\begin{equation}\label{RegPSIF}
\psi\in C^2(\R^m), F\in C^2(\M^{m\times n}),
\end{equation}
and  
\begin{equation}\label{UnifConv}
\begin{cases}
\alpha |z|^2\le \sum^{m}_{i,j=1}\psi_{w_iw_j}(w)z_iz_j\le A|z|^2, \quad w,z\in \R^m\\
\theta |\xi|^2\le \sum^m_{i,k=1}\sum^n_{j,l=1}F_{M^i_j,M^k_l}(M)\xi^{i}_j\xi^k_l\le \Theta |\xi|^2, \quad M,\xi\in \M^{m\times n}.
\end{cases}
\end{equation}
Here $\alpha, A,\theta,\Theta$ are positive constants.  

\par Observe that when $m=1$, equation \eqref{mainPDE} can be rewritten as 
\begin{equation}\label{misoneEq}
v_t=G(D^2v,Dv)
\end{equation}
where 
$$
G(X,\zeta):=(\psi^*)^{'}(D^2F(\zeta)\cdot X), \quad X=X^t\in \Mnn,\; \zeta\in \R^n.
$$
Under assumptions \eqref{RegPSIF} and \eqref{UnifConv}, the PDE \eqref{misoneEq} is uniformly parabolic. If we assume the third derivatives of $F$ are uniformly bounded,
then the nonlinearity $G$ satisfies 
$$
|G_{\zeta_{i}}(X,\zeta)|\le C|X|, \quad i=1,\dots, n. 
$$
In this case, the results of L. Caffarelli and L. Wang \cite{CaffWang} imply that for any viscosity solution $v$ of \eqref{misoneEq}, $Dv$ and $v_t$ exist and are locally H\"{o}lder continuous. In view of Theorem \ref{ViscSolnResult}, we have the following result.  

\begin{thm}
Assume $m=1$,  \eqref{RegPSIF}, \eqref{UnifConv}, $g\in C(\overline{\Omega})\cap W^{1,2}_0(\Omega)$, and $F\in W^{3,\infty}(\R^n)$. Then the weak solution of \eqref{mainPDE} that is the limit of the sequence $(v_N)_{N\in \N}$ defined in \eqref{CLaan} is continuously differentiable and has locally H\"{o}lder continuous derivatives.
\end{thm}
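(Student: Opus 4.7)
The plan is to apply the Caffarelli--Wang interior regularity result cited in the paragraph above the statement to the viscosity solution produced in Theorem \ref{ViscSolnResult}. By that theorem, the sequence $\{v_N\}_{N\in\N}$ from \eqref{CLaan} converges in $L^p(\Omega)\cap C(\overline{\Omega})$, uniformly for $t\in[0,T]$, to a limit $v$ which is simultaneously the unique viscosity solution and a weak solution of \eqref{scalarIVP}. So it suffices to show that any viscosity solution of the scalar PDE satisfies the hypotheses of \cite{CaffWang}, and thereby acquires locally H\"{o}lder continuous $Dv$ and $v_t$.

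First I would recast the PDE in the explicit form \eqref{misoneEq}. By \eqref{UnifConv}, $\psi':\R\to\R$ is a strictly increasing bi-Lipschitz bijection with inverse $(\psi^*)'$, and \eqref{RegPSIF} permits the expansion $\Div DF(Dv)=D^2F(Dv)\cdot D^2v$ in the sense of test functions. Applying $(\psi^*)'$ to both sides then yields
\[
v_t=G(D^2v,Dv),\qquad G(X,\zeta):=(\psi^*)'\!\left(D^2F(\zeta)\cdot X\right).
\]
Because $\psi'$ (equivalently $(\psi^*)'$) is a continuous monotone bijection, the viscosity sub/supersolution inequalities for the implicit form and for this explicit form are equivalent, so $v$ is a viscosity solution of $v_t=G(D^2v,Dv)$.

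Next I would verify the two structural requirements of \cite{CaffWang}. Uniform parabolicity follows from
\[
\frac{\partial G}{\partial X_{ij}}(X,\zeta)=(\psi^*)''\!\left(D^2F(\zeta)\cdot X\right)F_{\zeta_i\zeta_j}(\zeta),
\]
together with the elementary bounds $1/A\le(\psi^*)''\le 1/\alpha$ coming from $\alpha\le\psi''\le A$, and $\theta I\le D^2F(\zeta)\le\Theta I$ from \eqref{UnifConv}; these give an ellipticity interval $[\theta/A,\,\Theta/\alpha]$ for the coefficient matrix. For the gradient dependence,
\[
\frac{\partial G}{\partial\zeta_i}(X,\zeta)=(\psi^*)''\!\left(D^2F(\zeta)\cdot X\right)\sum_{j,k}F_{\zeta_i\zeta_j\zeta_k}(\zeta)\,X_{jk},
\]
which, by the assumption $F\in W^{3,\infty}(\R^n)$, satisfies the linear growth bound $|G_{\zeta_i}(X,\zeta)|\le C|X|$ with $C=\alpha^{-1}\|D^3F\|_{L^\infty}$, exactly as required.

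With these verifications in place, the Caffarelli--Wang theorem produces the claimed local H\"{o}lder continuity of $Dv$ and $v_t$. The only delicate step is the passage between the implicit and explicit formulations of the viscosity condition; this is standard once one notes that $\psi'$ is a continuous increasing bijection, so composing a test function inequality with $(\psi^*)'$ is reversible and test functions touching from above/below on either side are the same. The remaining ingredients are routine differentiation, where the hypothesis $F\in W^{3,\infty}(\R^n)$ is used precisely to bound $G_{\zeta}$ linearly in $X$ rather than by a power of $|X|$.
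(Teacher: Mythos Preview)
Your proposal is correct and follows essentially the same approach as the paper: the paper's proof is implicit in the paragraph preceding the theorem, where the equation is rewritten as $v_t=G(D^2v,Dv)$, uniform parabolicity and the bound $|G_{\zeta_i}|\le C|X|$ are noted, and Caffarelli--Wang is invoked for viscosity solutions, with Theorem~\ref{ViscSolnResult} supplying that the limit of $(v_N)$ is indeed a viscosity solution. You have simply spelled out the structural verifications in more detail, including the (correct and worthwhile) observation that the viscosity conditions for the implicit and explicit forms are equivalent because $(\psi^*)'$ is a continuous increasing bijection.
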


\par We shall now assume $m>1$ and pursue the partial regularity of weak solutions.  We will argue that the second derivatives of weak solutions are square integrable, and then use the resulting estimate to deduce an improvement of the compactness assertion Theorem \ref{BlowLem} when $p=2$.  This strengthening of the compactness of weak solutions is essential to our partial regularity approach which is based on a ``blow-up" technique.

\subsection{Integral estimates}\label{IntSubsec}
For each weak solution $\bv=(v^1,\dots,v^m)$ of \eqref{mainPDE}, we will denote $D^2\bv$ as the $(\Mnn)^m$ valued mapping
$$
D^2\bv:=(D^2v^1,\dots D^2v^m).
$$ 
As mentioned above, we aim to verify that for each $i\in \{1,\dots,m\}$, $D^2v^i$ is square integrable and then use the resulting estimates to improve upon Theorem \ref{BlowLem} when $p=2$. In this subsection, we will also argue that weak solutions arising as a limit of a subsequence of the implicit time scheme \eqref{IFT} satisfy an additional integral estimate $D\bv_t\in L^2_\text{loc}(\Omega\times (0,T); \Mmn)$.

\par Some notation that will help clarify our arguments are as follows. 
For a given $M=(M^1,\dots,M^m)\in (\Mnn)^m$ and $z\in \R^n$, we will write $Mz\in \Mmn$ for the matrix with $i,j$th components
\begin{equation}\label{MMNnotation}
M^iz\cdot e_j,
\end{equation}
and $Mz\cdot z\in \R^m$ for the vector with $j$th entry 
\begin{equation}\label{MMNnotation2}
M^jz\cdot z.
\end{equation}
Note that in \eqref{MMNnotation} and \eqref{MMNnotation2} the $``\cdot"$ is the usual dot product on $\R^n$.

\begin{prop}
Assume $\psi$ and $F$ satisfy \eqref{UnifConv}, and that $\bv$ is a weak solution of \eqref{mainPDE} on $\Omega\times (0,T)$.  Then $D^2\bv\in L^2_{\text{loc}}(\Omega\times [0,T]; (\Mnn)^m)$. Moreover,
for each open $\Sigma\subset\subset\Omega$ there is a constant $C=C(m,n, \Sigma,A,\theta,\Theta)$ such that
\begin{equation}\label{D2Vbound}
\int^T_0\int_{\Sigma} |D^2\bv(x,t)|^2dxdt\le C\int^T_0\int_\Omega \left(|\bv_t(x,t)|^2+|D\bv(x,t)|^2\right)dxdt.
\end{equation}
\end{prop}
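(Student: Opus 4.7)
The plan is to execute a Nirenberg-type difference-quotient argument, with the only nonstandard twist being the handling of the time-derivative term since one does not know a priori that $D\bv_t \in L^2$. Fix $\Sigma \Subset \Omega$ and a cutoff $\eta \in C_c^\infty(\Omega)$ with $\eta \equiv 1$ on $\Sigma$ and $0 \le \eta \le 1$. For $s \in \{1,\dots,n\}$ and $0 < |h| < \mathrm{dist}(\mathrm{supp}\,\eta, \partial\Omega)$, set $\Delta_h^s \bu(x) := h^{-1}(\bu(x+h e_s) - \bu(x))$ and, for a.e.\ $t \in (0,T)$, test the weak formulation \eqref{WeakSolnCond} against $\bw(x) := -\Delta_h^{-s}\bigl(\eta^2 \Delta_h^s \bv(x,t)\bigr) \in W^{1,2}_0(\Omega;\R^m)$.

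After invoking the discrete integration-by-parts identity $\int f\,\Delta_h^s g\,dx = -\int g\,\Delta_h^{-s} f\,dx$, I would linearize the flux term via
\[ \Delta_h^s\bigl(DF(D\bv)\bigr)(x,t) = B_h(x,t)\,\Delta_h^s D\bv(x,t), \qquad B_h := \int_0^1 D^2F\bigl(D\bv + \sigma h\,\Delta_h^s D\bv\bigr)\,d\sigma, \]
noting that \eqref{UnifConv} passes to $B_h$ to give $\theta|\xi|^2 \le B_h\xi \cdot \xi \le \Theta|\xi|^2$ for every $\xi \in \Mmn$. Expanding $D(\eta^2 \Delta_h^s \bv) = \eta^2 \Delta_h^s D\bv + 2\eta\,\Delta_h^s\bv \otimes D\eta$, ellipticity isolates the good term $\theta\int\eta^2 |\Delta_h^s D\bv|^2\,dx$, and Young's inequality absorbs the cross term with a remainder bounded by $C\int |D\bv|^2\,dx$ using $\|\Delta_h^s\bv\|_{L^2} \le \|D\bv\|_{L^2}$.

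The main obstacle is the time term $\int_\Omega \Delta_h^s(D\psi(\bv_t)) \cdot \eta^2 \Delta_h^s\bv\,dx$. The remedy is to transfer the difference quotient once more onto the test function,
\[ -\int_\Omega \Delta_h^s\bigl(D\psi(\bv_t)\bigr) \cdot \eta^2 \Delta_h^s\bv\,dx = \int_\Omega D\psi(\bv_t) \cdot \Delta_h^{-s}\bigl(\eta^2 \Delta_h^s\bv\bigr)\,dx, \]
and then estimate via Cauchy--Schwarz together with $\|\Delta_h^{-s}\bu\|_{L^2} \le \|D\bu\|_{L^2}$, valid because $\eta^2 \Delta_h^s\bv$ is compactly supported in $\Omega$ for such $h$. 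Since \eqref{minPSInow} forces $D\psi(0) = 0$, the Hessian bound in \eqref{UnifConv} gives $|D\psi(\bv_t)| \le A|\bv_t|$, so a final Young's inequality against $\|D(\eta^2 \Delta_h^s\bv)\|_{L^2}$ absorbs an $\varepsilon$-multiple of $\int \eta^2 |\Delta_h^s D\bv|^2\,dx$ back into the good term.

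Integrating in $t$ and choosing the Young parameters small, the resulting $h$-independent estimate
\[ \int_0^T \!\! \int_\Omega \eta^2 |\Delta_h^s D\bv|^2\,dx\,dt \le C \int_0^T \!\! \int_\Omega \bigl(|\bv_t|^2 + |D\bv|^2\bigr)\,dx\,dt \]
allows passage to the limit $h \to 0$ by weak $L^2$ compactness and Fatou's lemma, producing $\partial_{x_s}D\bv \in L^2(\Sigma \times [0,T]; \Mmn)$ for every $s$. Summing over $s$ then yields $D^2\bv \in L^2_{\mathrm{loc}}(\Omega \times [0,T]; (\Mnn)^m)$ together with the claimed bound \eqref{D2Vbound}.
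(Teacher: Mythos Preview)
Your argument is correct, but the paper's proof is considerably shorter because it recognizes that no genuinely parabolic estimate is needed here. For almost every fixed $t\in(0,T)$, the weak formulation \eqref{WeakSolnCond} says precisely that $\bv(\cdot,t)\in W^{1,2}(\Omega;\R^m)$ is a weak solution of the uniformly elliptic system $\Div DF(D\bv(\cdot,t))=D\psi(\bv_t(\cdot,t))$ with right-hand side in $L^2(\Omega;\R^m)$. The paper then simply invokes the classical interior $W^{2,2}$ estimate for such systems (as in Giaquinta--Martinazzi or Evans), obtaining
\[
\int_{\Sigma}|D^2\bv(x,t)|^2\,dx\le C_0\int_\Omega\bigl(|D\psi(\bv_t(x,t))|^2+|D\bv(x,t)|^2\bigr)\,dx,
\]
and integrates in $t$, using $|D\psi(w)|\le \sqrt{m}A|w|$ from \eqref{minPSInow} and \eqref{UnifConv}. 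Your Nirenberg difference-quotient computation is exactly the proof of that cited elliptic result, carried out inline; in particular, the ``obstacle'' you flag with the time term is not really an obstacle, since $D\psi(\bv_t)$ plays the role of a passive $L^2$ source and the step where you move the difference quotient back onto $\eta^2\Delta_h^s\bv$ is just the standard treatment of an $L^2$ right-hand side in that argument. What you gain is a self-contained proof; what the paper gains is brevity and a clearer separation between the elliptic estimate and the bound $|D\psi(\bv_t)|\lesssim|\bv_t|$.
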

\begin{proof}
Fix a time $t\in (0,T)$ for which $D\psi(\bv_t(\cdot,t))\in L^2(\Omega)$; recall that the set of such times has full Lebesgue measure in $(0,T).$ Since $x\mapsto \bv(x,t)$ satisfies
the uniformly elliptic equation \eqref{mainPDE}, the associated $W^{2,2}_{\text{loc}}(\Omega)$ estimates (Proposition 8.6 in \cite{GiaMar} or Theorem 1, Section 8.3 of \cite{Evans}) imply $D^2\bv(\cdot, t)\in L^2_{\text{loc}}(\Omega; (\Mnn)^m)$ and
$$
\int_{\Sigma} |D^2\bv(x,t)|^2dx\le C_0\int_\Omega\left\{|D\psi(\bv_t(x,t))|^2+|D\bv(x,t)|^2\right\}dx.
$$
Here $C_0=C_0(n, \Sigma,\theta,\Theta)$. The bound \eqref{D2Vbound} now follows from integration in time and employing \eqref{minPSInow} and \eqref{UnifConv}, which imply $|D\psi(w)|=|D\psi(w)-D\psi(0)|\le \sqrt{m}A|w|$.
\end{proof}
Interestingly enough, we can use the above bound to obtain compactness of second derivatives of solutions of \eqref{mainPDE}. We view this an as improvement of Theorem \ref{BlowLem} when $p=2$.  

\begin{cor}\label{H2Compactness}
Assume $\{\bv^k\}_{k\in \N}$ is a sequence of weak solutions of \eqref{mainPDE} on $\Omega\times (0,T)$ converging to another weak solution $\bv$ in the following sense
\begin{equation}\label{StrongConv2}
\begin{cases}
\bv^k\rightarrow \bv \; \text{in}\; L^2([0,T] ; H^1(\Omega;\R^m)) \\\ 
\bv^k_t\rightarrow \bv_t \; \text{in}\; L^2(\Omega\times [0,T];\R^m)
\end{cases}.
\end{equation}
Then there is a subsequence $\{D^2\bv^{k_j}\}_{j\in \N}$ that converges to $D^2\bv$ in $L^2_{\text{loc}}(\Omega\times (0,T);(\Mnn)^m)$. 
\end{cor}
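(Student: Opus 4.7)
The plan is to first obtain the weak convergence $D^2 \bv^{k_j} \rightharpoonup D^2 \bv$ in $L^2_{\text{loc}}$ from the bound \eqref{D2Vbound} of the preceding proposition, and then upgrade to strong convergence by applying the same $W^{2,2}$ machinery to the difference $\bw^k := \bv^k - \bv$. Weak convergence is immediate: \eqref{StrongConv2} implies that $\{|\bv^k_t|^2 + |D\bv^k|^2\}$ is uniformly bounded in $L^1(\Omega \times (0,T))$, so \eqref{D2Vbound} gives uniform $L^2_{\text{loc}}$ control of $D^2 \bv^k$; any weak subsequential limit must agree with $D^2 \bv$ in the distributional sense since $\bv^k \to \bv$ in $L^2([0,T]; H^1)$.

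For the strong convergence step, note that for Lebesgue almost every $t \in (0,T)$ the map $x \mapsto \bw^k(x,t)$ satisfies, in the weak sense on $\Omega$,
$$
\Div\big[DF(D\bv^k(\cdot,t)) - DF(D\bv(\cdot,t))\big] = D\psi(\bv^k_t(\cdot,t)) - D\psi(\bv_t(\cdot,t)) =: \bF^k(\cdot,t),
$$
and the bound $\|\bF^k(\cdot,t)\|_{L^2(\Omega)} \le A\,\|\bw^k_t(\cdot,t)\|_{L^2(\Omega)}$ follows from \eqref{UnifConv}. Fix $\Sigma \subset\subset \Omega$ and $\zeta \in C^\infty_c(\Omega)$ with $\zeta \equiv 1$ on $\Sigma$. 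For each direction $e_\alpha$ and small $h$, I would apply the spatial difference quotient $\delta^\alpha_h$ to the displayed equation and test with $\zeta^2 \delta^\alpha_h \bw^k$, exactly as in the proof of the preceding proposition. The mean value theorem applied to $DF$ and $D\psi$ at scale $h$ produces measurable tensors $A^k_h, B^k_h$ inheriting the bounds $\theta|\xi|^2 \le A^k_h \xi\cdot\xi \le \Theta|\xi|^2$ and $\alpha|z|^2 \le B^k_h z\cdot z \le A|z|^2$ from \eqref{UnifConv}, which is all that the Caccioppoli manipulations (Cauchy--Schwarz plus absorption) need. The resulting bound, uniform in $h$ and $k$, is
$$
\int_0^T\!\!\int_\Sigma |\delta^\alpha_h D\bw^k|^2\, dx\, dt \le C\int_0^T\!\!\int_\Omega \big(|\bw^k_t|^2 + |D\bw^k|^2\big)\, dx\, dt,
$$
with $C = C(m,n,\Sigma,A,\theta,\Theta)$. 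Letting $h \to 0$, summing over $\alpha$, and invoking \eqref{StrongConv2} proves $D^2 \bv^k \to D^2 \bv$ in $L^2_{\text{loc}}(\Omega\times(0,T);(\Mnn)^m)$.

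The main obstacle is how to implement the $W^{2,2}$ estimate on the difference equation. If one first linearizes by writing $DF(D\bv^k) - DF(D\bv) = A^k D\bw^k$ with $A^k(x,t) := \int_0^1 D^2F(D\bv + sD\bw^k)\, ds$, then $A^k$ is only $L^\infty$ in $x$, and classical linear $W^{2,2}$ theory for divergence-form systems with merely measurable coefficients does not apply. The fix is to apply $\delta^\alpha_h$ to the nonlinear quantity $DF(D\bv^k) - DF(D\bv)$ itself rather than to the linearized product, so that a fresh uniformly elliptic mean-value tensor appears at every scale $h$ without any need for coefficient regularity — this is the same mechanism that underlies the proof of the preceding proposition.
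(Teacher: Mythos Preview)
Your difference-quotient argument has a gap at exactly the point you flag as ``the main obstacle.'' Applying $\delta^\alpha_h$ to the nonlinear quantity $DF(D\bv^k)-DF(D\bv)$ does \emph{not} produce a single uniformly elliptic tensor acting on $\delta^\alpha_h D\bw^k$. What the mean value theorem actually gives is
\[
\delta^\alpha_h\bigl[DF(D\bv^k)-DF(D\bv)\bigr]
= A^k_h\,\delta^\alpha_h D\bv^k - A_h\,\delta^\alpha_h D\bv
= A^k_h\,\delta^\alpha_h D\bw^k + (A^k_h-A_h)\,\delta^\alpha_h D\bv,
\]
with two different mean-value tensors $A^k_h$ and $A_h$. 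After testing with $\zeta^2\delta^\alpha_h\bw^k$ and absorbing, the Caccioppoli step leaves an unavoidable commutator term
\[
\int \zeta^2\,|A^k_h-A_h|^2\,|\delta^\alpha_h D\bv|^2\,dx,
\]
which is \emph{not} bounded by $C\int(|\bw^k_t|^2+|D\bw^k|^2)$ with a constant depending only on $m,n,\Sigma,A,\theta,\Theta$. So the displayed estimate you claim, uniform in $h$ and $k$, is false as stated. The commutator can be handled---let $h\to 0$ first to get $\int\zeta^2|D^2F(D\bv^k)-D^2F(D\bv)|^2|D^2\bv|^2$, then pass to a subsequence along which $D\bv^{k_j}\to D\bv$ a.e.\ and use dominated convergence ($|D^2F|\le\Theta$, $|D^2\bv|^2\in L^1_{\mathrm{loc}}$)---but this requires the prior bound $D^2\bv\in L^2_{\mathrm{loc}}$ and an extra subsequence extraction, neither of which you invoke.

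The paper avoids this commutator entirely by working with the differentiated equation for a \emph{single} solution: set $\bw^k:=\bv^k_{x_i}$, so that $\Div(\ba^k D\bw^k)=\partial_{x_i}D\psi(\bv^k_t)$ with $\ba^k:=D^2F(D\bv^k)$. Since $\ba^k\to\ba:=D^2F(D\bv)$ a.e.\ along a subsequence and is uniformly bounded, $\ba^{k_j}\to\ba$ in every $L^p_{\mathrm{loc}}$ ($p<\infty$). One then runs a Minty-type energy argument: test the equation for $\bw^k$ with $\eta(\bw-\bw^k)$, use the uniform convexity $\ba^k\xi\cdot\xi\ge\theta|\xi|^2$, and combine the weak convergence $D\bw^{k_j}\rightharpoonup D\bw$ with the strong convergence of $\ba^{k_j}$ and of $D\psi(\bv^{k_j}_t)$ to squeeze out $\int\eta|D\bw^{k_j}-D\bw|^2\to 0$. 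The point is that only one coefficient tensor $\ba^k$ appears, and its strong convergence does the work that your missing commutator estimate would have to do.
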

\begin{proof}
By the estimate \eqref{D2Vbound}, $\{D^2\bv^k\}_{k\in \N}$ is bounded in $L^2_{\text{loc}}(\Omega\times(0,T);(\Mnn)^m)$ and thus a subsequence $\{D^2\bv^{k_j}\}_{k\in \N}$ converges weakly to $D^2\bv$. We will now 
argue that this convergence occurs in fact strongly.  Fix $i\in \{1,\dots, n\}$ and set 
$$
\bw^{k} : = \bv^{k}_{x^i}\quad\text{and}\quad \bw : = \bv_{x^i}.
$$ 
Differentiating equation \eqref{mainPDE} with respect to $x_i$, we see $\bw^k$ satisfies the PDE
\begin{equation}\label{wkEqn}
\text{div}(\ba^kD\bw^k)=\bF^k, \quad \Omega\times(0,T)
\end{equation}
 in a weak sense where 
$$
\begin{cases}
\ba^k:=D^2F(D\bv^k)\\
\bF^k:=\partial_{x_i}D\psi(\bv^k_t)
\end{cases}.
$$ 
Recall \eqref{UnifConv} implies
\begin{equation}\label{akbounded}
\theta |\xi|^2\le \ba^k(x,t)\xi\cdot \xi \le \Theta |\xi|^2
\end{equation}
for $\xi\in \Mmn$. Moreover, by our assumption \eqref{RegPSIF} and \eqref{StrongConv2}, there is a subsequence $\{\ba^{k_j}\}_{j\in \N}$ such that
$$
\ba^{k_j}\rightarrow \ba:=D^2F(D\bv)
$$
pointwise in $\Omega\times(0,T)$. By the uniform boundedness of $\ba^k$ \eqref{akbounded} and the interpolation of Lebesgue spaces, this convergence also occurs in $L^p_{\text{loc}}(\Omega\times (0,T))$ for each $1\le p<\infty$.  Also observe that $\bF^k\in L^2([0,T]; H^{-1}(\Omega; \R^m))$ converges to $\bF:=\partial_{x_i}D\psi(\bv_t)$ in $L^2([0,T]; H^{-1}(\Omega; \R^m))$. 

\par Now, $D\bw^{k_j}$ is weakly convergent to $D\bw=D\bv_{x_i}$ in $L^2(\Omega \times (0,T); \Mmn)$. And for any nonnegative $\eta\in C^\infty_c(\Omega\times(0,T)$,
$$
\int^T_0\int_\Omega \eta \ba^kD\bw^k\cdot D\bw^kdxdt\ge \int^T_0\int_\Omega\left(\eta \ba^kD\bw\cdot D\bw + 2\ba^kD\bw\cdot(D\bw^k -D\bw)\eta \right)dxdt.
$$
Hence, 
$$
\liminf_{j\rightarrow \infty}\int^T_0\int_\Omega\eta \ba^{k_j}D\bw^{k_j}\cdot D\bw^{k_j}dxdt \ge \int^T_0\int_\Omega \eta \ba D\bw\cdot D\bw dxdt.
$$
This inequality also follows more generally due to results of A. Ioffe \cite{Ioffe1, Ioffe2}. 

\par Employing  the uniform convexity of the function $\Mmn\ni \xi\mapsto \ba^k\xi\cdot \xi$, we obtain through integrating by parts and equation \eqref{wkEqn} that
\begin{align*}
\int^T_0\int_\Omega \eta \ba^kD\bw \cdot D\bw dxdt &\ge \int^T_0\int_\Omega \eta \left\{\ba^kD\bw^k\cdot D\bw^k + 2\ba^kD\bw^k\cdot(D\bw -D\bw^k) \right . \\
&\quad\quad\quad\left. + \theta|D\bw^k-D\bw|^2 \right\} dxdt \\
&= \int^T_0\int_\Omega \left\{\eta \ba^kD\bw^k\cdot D\bw^k -  2D\eta\cdot \ba^kD\bw^k(\bw -\bw^k) \right. \\
&\left.  \;\;+ \theta|D\bw^k-D\bw|^2 \eta\right\} dxdt + 2 \int^T_0\int_\Omega D\psi(\bv^k_t)\cdot \partial_{x_i}(\eta(\bw - \bw^k))dt.
\end{align*}
\par Observe that 
$$
\lim_{j\rightarrow \infty }\int^T_0\int_\Omega D\psi(\bv^{k_j}_t)\cdot \partial_{x_i}(\eta(\bw - \bw^{k_j}))dt=0,
$$
which follows from the strong convergence of $\bv^k_t$ and the weak convergence of the sequence $\{D\bw^{k_j}\}_{j\in\N}$.  Likewise, 
$$
\lim_{j\rightarrow \infty }\int^T_0\int_\Omega D\eta\cdot \ba^{k_j}D\bw^{k_j}(\bw -\bw^{k_j})dxdt=0.
$$
As a result
\begin{align*}
\int^T_0\int_\Omega \eta \ba D\bw \cdot D\bw dxdt & \ge \liminf_{j\rightarrow \infty}\int^T_0\int_\Omega\eta \ba^{k_j}D\bw^{k_j}\cdot D\bw^{k_j}dxdt  \\
& \hspace{1in} +\theta \liminf_{j\rightarrow \infty}\int^T_0\int_\Omega\eta |D\bw^{k_j}-D\bw|^2 dxdt \\
&\ge \int^T_0\int_\Omega \eta \ba D\bw \cdot D\bw dxdt \\
& \hspace{1in} +\theta \liminf_{j\rightarrow \infty}\int^T_0\int_\Omega\eta |D\bw^{k_j}-D\bw|^2 dxdt.
\end{align*}
Hence, $D\bv^{k_j}_{x^i}\rightarrow D\bv_{x_i}\in L^2_{\text{loc}}(\Omega\times(0,T); \Mmn)$ for each $i\in \{1,\dots,n\}$.  
\end{proof}

Note that under our uniform convexity assumption \eqref{UnifConv} on $F$, the heuristic computation given in the proof of Proposition \ref{NewDecrease} yields 
$$
\frac{d}{dt}\int_\Omega\psi^*(D\psi(\bv_t(x,t)))dx \le -\theta \int_\Omega |D\bv _t(x,t)|^2dx
$$ 
for solutions of the initial value problem \eqref{mainIVP}. 
After integrating in time, this formally implies that we should expect $D\bv_t\in L^2_\text{loc}(\Omega\times (0,T); \Mmn)$; recall condition \eqref{DVTbound} is
the main hypothesis in Theorem \ref{PartialRegThm}. We will now verify that this integrability holds
for each solution arising from the implicit time scheme. 

\begin{prop}\label{DVTProp}
Let $\bv$ be a weak solution as described in Proposition \ref{ExistProp}.  Then $\bv$ satisfies \eqref{DVTbound}. Moreover, there is a constant $C=C(m,n, \theta,\Theta,\alpha,A)$ such that 
\begin{equation}\label{DVTLocalMan}
\int^T_0\int_{\Omega}\eta^2|D\bv_t|^2dxdt \le C \int^T_0\int_{\Omega}(|\eta||\eta_t| +|D\eta|^2)|\bv_t|^2dxdt
\end{equation}
for each $\eta\in C^\I_c(\Omega\times(0,T))$. 
\end{prop}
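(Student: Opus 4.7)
The plan is to rigorize the formal calculation preceding the proposition at the discrete level of the implicit scheme \eqref{IFT}, then pass to the limit using the convergence $\partial_t \bu_{N_j} \to \bv_t$ in $L^2(\Omega \times [0, T]; \R^m)$ furnished by Proposition \ref{ExistProp}. Set $\tau = T/N$, $\tau_k = k\tau$, $w_t^k := (\bv^k - \bv^{k-1})/\tau$, and, for a fixed cutoff $\eta \in C_c^\infty(\Omega \times (0, T))$, write $\eta_k(x) := \eta(x, \tau_k)$. For each $k = 2, \dots, N$ I would subtract the weak form \eqref{WeakIFT} at step $k - 1$ from that at step $k$, and test the difference with $\bw = \eta_k^2 w_t^k \in H_0^1(\Omega; \R^m)$.

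Two ingredients then control the resulting expression. For the $\psi$-term, the Legendre identity $D\psi^* \circ D\psi = \mathrm{id}$ together with convexity of $\psi^*$ gives
$$
[D\psi(w_t^k) - D\psi(w_t^{k-1})] \cdot w_t^k \ge \psi^*(D\psi(w_t^k)) - \psi^*(D\psi(w_t^{k-1})),
$$
the discrete analogue of the formal time derivative in the proof of Proposition \ref{NewDecrease}. For the $F$-term, write $DF(D\bv^k) - DF(D\bv^{k-1}) = \tau a^k Dw_t^k$ with $a^k := \int_0^1 D^2F(D\bv^{k-1} + s\tau Dw_t^k)\, ds$ satisfying $\theta I \le a^k \le \Theta I$ by \eqref{UnifConv}. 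Expanding $D(\eta_k^2 w_t^k) = \eta_k^2 Dw_t^k + 2\eta_k D\eta_k \otimes w_t^k$ and applying Young's inequality to the cross term yields the one-step bound
$$
\tfrac{\tau\theta}{2} \int_\Omega \eta_k^2 |Dw_t^k|^2 dx + \int_\Omega \eta_k^2 [\psi^*(D\psi(w_t^k)) - \psi^*(D\psi(w_t^{k-1}))] dx \le C\tau \int_\Omega |D\eta_k|^2 |w_t^k|^2 dx.
$$
Summing over $k$, applying Abel summation to the telescoping $\psi^*$-difference (the boundary contributions vanish since $\eta$ has compact temporal support in $(0, T)$ and $N$ is large), estimating $|\eta_{k+1}^2 - \eta_k^2|/\tau$ by $2|\eta\eta_t| + o(1)$ via smoothness of $\eta$, and using $0 \le \psi^*(D\psi(w)) \le C|w|^2$ (from \eqref{UnifConv}, \eqref{minPSInow}, and $D\psi(0) = 0$) produces a uniform-in-$N$ bound of Riemann-sum type:
$$
\sum_{k=2}^N \tau \int_\Omega \eta_k^2 |Dw_t^k|^2 dx \le C \int_0^T \!\!\int_\Omega (|\eta||\eta_t| + |D\eta|^2)|\partial_t \bu_N|^2 dx\, dt + o(1).
$$

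Since $\partial_t \bu_N = w_t^k$ on $(\tau_{k-1}, \tau_k]$, the LHS is essentially $\iint \tilde\eta_N^2 |D\partial_t \bu_N|^2$ for step cutoffs $\tilde\eta_N \to \eta$. Along the subsequence $\{N_j\}$ of Proposition \ref{ExistProp}, the strong convergence $\partial_t \bu_{N_j} \to \bv_t$ in $L^2$ drives the RHS to $C\iint (|\eta||\eta_t|+|D\eta|^2)|\bv_t|^2$, while the LHS yields a uniform $L^2$-bound for $\eta D\partial_t \bu_{N_j}$. Extracting a further weakly convergent subsequence and identifying its limit as $\eta D\bv_t$ through distributional consistency with the strong $L^2$ convergence, weak lower-semicontinuity delivers \eqref{DVTLocalMan}, which in particular forces $D\bv_t \in L^2_{\text{loc}}(\Omega \times (0, T); \Mmn)$. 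The main technical obstacle is this final identification step, i.e.\ verifying that the spatial gradient of the piecewise-constant-in-time velocity $\partial_t \bu_{N_j}$ converges weakly in $L^2_{\text{loc}}$ to $D\bv_t$. Once that is secured, the rest is a faithful discrete transcription of the formal energy calculation.
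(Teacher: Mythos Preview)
Your proposal is correct and follows essentially the same strategy as the paper: rigorize the formal $\psi^*$-monotonicity at the discrete level with a spatial-temporal cutoff, sum, and pass to the limit using the convergence $\partial_t\bu_{N_j}\to\bv_t$ from Proposition~\ref{ExistProp} together with weak lower semicontinuity. The only organizational difference is that the paper first proves $D\bv_t\in L^2_{\text{loc}}$ via the \emph{non}-localized version of your one-step inequality (summing \eqref{DiscreteNewDecrease} sharpened by uniform convexity of $F$ from a well-chosen starting time $t_0$), identifies the weak limit of $\partial_t D\bu_{N_j}$ as $D\bv_t$ by pairing against $D\Psi_t$ for $\Psi\in C^\infty_c$, and only \emph{then} runs the cut-off computation to obtain \eqref{DVTLocalMan}; you collapse these into a single localized argument. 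Your version is slightly more economical, while the paper's separation makes the identification step --- precisely the ``main technical obstacle'' you flag --- cleaner, since it is carried out without the cutoff $\tilde\eta_N$ in the way. A further minor cosmetic difference: the paper tests the $k$-th equation alone with $(\eta^k)^2(\bv^k-\bv^{k-1})$ and then invokes the manipulation behind \eqref{DiscreteNewDecrease} (using convexity of $F$ directly rather than your integral representation $a^k=\int_0^1 D^2F$), arriving at an inequality already in telescoping form; your subtraction-then-test followed by Abel summation is an equivalent bookkeeping.
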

\begin{proof}
1. We will use the same notation as in section \ref{ITS}. By Proposition \ref{ExistProp}, $\partial_t\bu_{N_j}$ converges to $\bv_t$ in $L^2((0,T);L^2(\Omega; \R^m)).$ Without any loss of generality, 
we assume that this convergence also occurs pointwise (since this is true for a subsequence of $\partial_t\bu_{N_j}$).  Let $\delta\in (0,T)$ and choose $t_0\in (0,\delta)$ for which $
\partial_t\bu_{N_j}(\cdot, t_0)\rightarrow \bv(\cdot,t_0)$ in $L^2(\Omega;\R^m)$ and $t_0/T\notin\Q$.  In particular, $t_0\neq \frac{k}{N}T$ for any $N\in \N$ and $k\in\{1,\dots, N\}$.  With this choice
of $t_0$, there is $k_j\in \{1,\dots, N_j\}$ such that  $t_0\in (\tau_{k_j-1}, \tau_{k_j})$ and
\begin{equation}\label{ConvatTO}
\partial_t\bu_{N_j}(\cdot, t_0)=\frac{\bv^{k_j}-\bv^{k_j-1}}{(T/N_j)}\rightarrow  \bv_t(\cdot,t_0)
\end{equation}
in $L^2(\Omega;\R^m)$ as $j\rightarrow \infty$. 

\par We may now employ the uniform convexity of $F$ to improve inequality \eqref{DiscreteNewDecrease} and arrive at
\begin{equation}\label{DecreaseBound2}
\theta \int_\Omega \frac{|D\bv^k-D\bv^{k-1}|^2}{\tau}dx +
\int_\Omega \psi^*\left(D\psi\left(\frac{\bv^k-\bv^{k-1}}{\tau}\right)\right)dx\le \int_\Omega \psi^*\left(D\psi\left(\frac{\bv^{k-1}-\bv^{k-2}}{\tau}\right)\right)dx
\end{equation}
for $k=2,\dots, N$.  Note that $\psi^*\ge 0$, which follows from \eqref{minPSInow}. Letting $\tau=T/N_j$, using \eqref{ConvatTO} and summing the above inequality from $k=k_j+1$ to $N_j$ gives 
\begin{align}\label{DecreaseBound3}
\theta \int^T_{\tau_{k_j+1}}\int_\Omega|\partial_tD\bu_{N_j}|^2dxdt&=\theta \sum^{N_j}_{k=k_j+1}\int_\Omega \frac{|D\bv^k-D\bv^{k-1}|^2}{(T/N_j)}dx \nonumber \\
&\le\int_\Omega\psi^*\left(D\psi\left(\partial_t\bu_{N_j}(x, t_0)\right)\right)dx-\int_\Omega \psi^*\left(D\psi\left(\frac{\bv^{N_j}-\bv^{N_j-1}}{(T/N_j)}\right)\right)dx \nonumber \\
&\le\int_\Omega\psi^*\left(D\psi\left(\partial_t\bu_{N_j}(x, t_0)\right)\right)dx.
\end{align}
Also observe that as $|t_0-\tau_{k_j}|< T/N_j$,  $\tau_{k_j}<\delta$ for all $j\in \N$ sufficiently large. Combining this observation with \eqref{ConvatTO} and \eqref{DecreaseBound3} provides the bound 
$$
\sup_{j\in \N}\int^T_{\delta}\int_\Omega|\partial_tD\bu_{N_j}|^2dxdt <\infty. 
$$
\par 2. It follows that $\{\partial_tD\bu_{N_j}\}_{j\in \N}$ is bounded in $L^2_\text{loc}(\Omega\times (0,T); \Mmn)$. Without loss of generality, let us suppose 
$$
\partial_tD\bu_{N_j}\rightharpoonup W 
$$ 
in $L^2_\text{loc}(\Omega\times (0,T); \Mmn)$. It is not hard to see that in fact $W=D\bv_t$. Indeed, for $\Psi\in C^\infty_c(\Omega\times (0,T); \R^m)$
\begin{align*}
\int^T_0\int_\Omega W\Psi dxdt & = \lim_{j\rightarrow\infty}\int^T_0\int_\Omega\left(\partial_tD\bu_{N_j}\right)\Psi dxdt \\
&=  \lim_{j\rightarrow\infty}\int^T_0\int_\Omega \bu_{N_j} \left(D\Psi_t\right) dxdt\\
&=  \int^T_0\int_\Omega \bv \left(D\Psi_t\right) dxdt.
\end{align*}
As a result, $D\bv_t\in L^2_\text{loc}(\Omega\times (0,T); \Mmn)$ as claimed. 

\par 3. For $\eta\in C^\infty_c(\Omega\times(0,T))$, we set $\eta^k:=\eta(\cdot, \tau_k)\in C^\infty_c(\Omega)$.  
Choosing $\bw=(\eta^k)^2(\bv^k-\bv^{k-1})$ in \eqref{WeakIFT} and manipulating as we did to derive inequality \eqref{DecreaseBound2}, we find 
\begin{align}\label{LastCSapp}
\theta \int_\Omega(\eta^k)^2 \frac{|D\bv^k-D\bv^{k-1}|^2}{\tau}dx & \le \int_\Omega (\eta^{k-1})^2 \psi^*\left(D\psi\left(\frac{\bv^{k-1}-\bv^{k-2}}{\tau}\right)\right)dx  \nonumber \\
& \; - \int_\Omega (\eta^k)^2\psi^*\left(D\psi\left(\frac{\bv^k-\bv^{k-1}}{\tau}\right)\right)dx \nonumber\\
&\; + \int_\Omega 2\eta^k\left(DF(D\bv^{k-1})-DF(D\bv^{k})\right)\cdot \frac{\bv^k-\bv^{k-1}}{\tau}\otimes D\eta^k\nonumber\\
&\; +\int_\Omega\left((\eta^k)^2-(\eta^{k-1})^2\right)\psi^*\left(D\psi\left(\frac{\bv^{k-1}-\bv^{k-2}}{\tau}\right)\right)dx
\end{align}
for $k=2,\dots, N$.
\par Also note that \eqref{UnifConv} implies $|F_{M^i_j,M^k_l}(M)|\le \Theta$ for each $M\in\Mmn$, $i,k=1,\dots,m$ and $j,l=1,\dots, n$. It follows that
$$
|DF(D\bv^k)-DF(D\bv^{k-1})|\le \sqrt{mn}\Theta|D\bv^k-D\bv^{k-1}|
$$
for $k=2,\dots, N$.  Using this inequality, we can estimate the integrand of the second to last integral on the right hand side of \eqref{LastCSapp} as  
\begin{align}\label{CSest}
\left|2\eta^k\left(DF(D\bv^{k-1})-DF(D\bv^{k})\right)\cdot \frac{\bv^k-\bv^{k-1}}{\tau}\otimes D\eta^k \right|
&\le \frac{\theta}{2}(\eta^k)^2 \frac{|D\bv^k-D\bv^{k-1}|^2}{\tau} \nonumber \\
&\hspace{.3in} + \frac{2mn\Theta^2}{\theta} |D\eta^k|^2 \frac{|\bv^{k-1}-\bv^{k-2}|^2}{\tau} 
\end{align}
and combine this upper bound with the integrand on the left hand side of inequality \eqref{LastCSapp}. 

\par 4. Recall the simple estimate $0\le \psi^*(D\psi(w))=D\psi(w)\cdot w -\psi(w)\le \frac{A}{2}|w|^2$ which follows from \eqref{minPSInow} and \eqref{UnifConv}. Using this bound 
to estimate the last integrand on the right hand side of \eqref{LastCSapp}, employing \eqref{CSest} and summing over $k=2,\dots, N$ gives
\begin{align}\label{DiscreteLocalEnergy}
\sum^{N}_{k=2}\int_\Omega(\eta^k)^2 \frac{|D\bv^k-D\bv^{k-1}|^2}{\tau}dx &\le C\sum^{N}_{k=2}\int_\Omega|D\eta^k|^2\frac{|\bv^k-\bv^{k-1}|^2}{\tau}dx \quad + \nonumber \\
&\quad C\sum^{N}_{k=2}\int_\Omega\left((|\eta^k|+|\eta^{k-1}|)\frac{|\eta^k-\eta^{k-1}|}{\tau}\right)\frac{|\bv^{k-1}-\bv^{k-2}|^2}{\tau}.
\end{align}
Here $C$ only depends on $m,n, \theta, \Theta, a$ and $A$.  

\par Define 
$$
\eta_N(x,t):=
\begin{cases}
0, \quad &(x,t)\in \Omega\times \{0\},\\
\eta^k(x), \quad &(x,t)\in\Omega \times(\tau_{k-1}, \tau_k]\quad k=1,\dots,N.
\end{cases}
$$
As $\eta_N$ converges to $\eta$ uniformly on $\Omega\times(0,T)$ and $\partial_tD\bu_N$ converges to $D\bv_t$ weakly in $L^2_\text{loc}(\Omega\times (0,T); \Mmn)$,  
\begin{align*}
\liminf_{j\rightarrow\infty}\sum^{N_j}_{k=2}\int_\Omega(\eta^{k})^2 \frac{|D\bv^{k}-D\bv^{{k}-1}|^2}{(T/N_j)}dx &=\liminf_{j\rightarrow\infty}\int^T_0\int_{\Omega}(\eta_{N_j})^2|\partial_tD\bu_{N_j}|^2dxdt \\
&\ge \int^T_0\int_{\Omega}\eta^2|D\bv_t|^2dxdt.
\end{align*}
\par Likewise, we can argue
$$
\lim_{j\rightarrow\infty}\sum^{N_j}_{k=2}\int_\Omega|D\eta^k|^2\frac{|\bv^k-\bv^{k-1}|^2}{(T/N_j)}dx=\int^T_0\int_{\Omega}|D\eta|^2|\bv_t|^2dxdt
$$
and
$$
\lim _{j\rightarrow\infty}\sum^{N_j}_{k=2}\int_\Omega\left((|\eta^k|+|\eta^{k-1}|)\frac{|\eta^k-\eta^{k-1}|}{(T/N_j)}\right)\frac{|\bv^{k-1}-\bv^{k-2}|^2}{(T/N_j)}dx=\int^T_0\int_{\Omega}2|\eta||\eta_t||\bv_t|^2dxdt.
$$
Consequently, we may send $N=N_j\rightarrow \infty$ in \eqref{DiscreteLocalEnergy} and conclude \eqref{DVTLocalMan}. 
\end{proof} 
\begin{rem}
Inequality \eqref{DVTLocalMan} holds by direct computation for any smooth solution of \eqref{mainPDE}. However, we are only able to rigorously justify this estimate
for solutions arising from the implicit time scheme. 
\end{rem}

\subsection{Partial regularity}
We now pursue partial regularity for weak solutions of the system \eqref{mainPDE} that satisfy the integrability condition \eqref{DVTbound} when $F(M)=\frac{1}{2}|M|^2$. This corresponds to the PDE
\begin{equation}\label{PDEex2}
D\psi(\bv_t)=\Delta \bv, \quad \Omega\times(0,T).
\end{equation}
In particular, our arguments will at least apply to any weak solutions obtained via the implicit time scheme as described in Proposition \ref{ExistProp}. 
So we assume for the remainder of this section that $\bv$ is such a weak solution.   We also remark that while various methods used to obtain partial regularity for parabolic systems do not directly apply, 
we did learn a lot from consulting works such as \cite{DMS, DM, Giaq, GiaqGiu,GIaStr,GiuMir, MaxT}.  In particular, our work was heavily influenced by the references \cite{EvansWeak, Giaq, GiaMar}. 

\par  In what follows, we denote a parabolic cylinder of radius $r>0$ centered at $(x,t)$ as
$$
Q_r(x,t):=B_r(x)\times(t-r^2/2, t+r^2/2)
$$
and the average of a mapping $\bw$ over $Q_r=Q_r(x,t)$ as
$$
\bw_{Q_r}=\ffint\;\bw:=\frac{1}{|Q_r|}\iint_{Q_r}\bw(y,s)dyds.
$$
A quantity that will be of great utility to us is 
\begin{align}\label{localEE}
E(x,t,r)&:=\ffint|\bv_t - (\bv_t)_{Q_r}|^2 dyds+ \ffint\;\; \left|\frac{D\bv - (D\bv)_{Q_r}- (D^2\bv)_{Q_r}(y-x) }{r}\right|^2dyds \nonumber \\
&\hspace{1.5in}+ \ffint|D^2\bv - (D^2\bv)_{Q_r}|^2 dyds
\end{align}
which is defined for $Q_r(x,t)\subset \Omega\times(0,T)$  and $r>0$.  We remark that the notation for the
product $(D^2\bv)_{Q_r}(y-x)$ in \eqref{localEE} is specified in \eqref{MMNnotation}.  Our first task is to verify an important
decay property of $E$. 

\begin{lem}\label{BlowUpLemma}
Assume $\bv$ is a solution of \eqref{PDEex2}. For each $L>0$, there are $\epsilon, \vartheta, \rho\in (0,1/2)$ for which 
\begin{equation}\label{BlowLemHypoth}
\begin{cases}
Q_r(x,t)\subset \Omega\times(0,T), \quad r<\rho\\
|(\bv_t)_{Q_r}|, |(D^2\bv)_{Q_r}|\le L\\
E(x,t,r)\le \epsilon^2
\end{cases} \nonumber
\end{equation}
implies 
$$
E(x,t,\vartheta r)\le \frac{1}{2}E(x,t,r).
$$
\end{lem}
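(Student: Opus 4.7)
The plan is a blow-up/compactness argument, contradicting the conclusion by extracting a limiting profile that solves a linear constant-coefficient parabolic system and then using linear regularity to defeat the lower bound on the excess at scale $\vartheta r_k$. Assume for contradiction that the conclusion fails for some $L > 0$; I will pin down $\vartheta = \vartheta(L, \alpha, A, m, n) \in (0, 1/2)$ at the very end. The negation then supplies sequences $\epsilon_k, \rho_k \to 0$ and points $(x_k, t_k, r_k)$ with $Q_{r_k}(x_k, t_k) \subset \Omega\times(0,T)$, $r_k < \rho_k$, and averages $a_k := (\bv_t)_{Q_{r_k}}$, $B_k := (D\bv)_{Q_{r_k}}$, $C_k := (D^2\bv)_{Q_{r_k}}$ satisfying $|a_k|, |C_k|\le L$, such that $\lambda_k^2 := E(x_k, t_k, r_k) \le \epsilon_k^2 \to 0$ yet $E(x_k, t_k, \vartheta r_k) > \lambda_k^2/2$. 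The cases with $\lambda_k = 0$ are trivial. I rescale by
\[
\bw^k(y,s) := \frac{\bv(x_k + r_k y,\; t_k + r_k^2 s) - P_k(y, s)}{r_k^2 \lambda_k}, \qquad (y,s) \in Q_1,
\]
where $P_k(y, s) := \alpha_k + r_k B_k y + \tfrac{1}{2} r_k^2 C_k y \cdot y + r_k^2 a_k s$ subtracts off the natural second-order Taylor polynomial (with $\alpha_k$ chosen so that $(\bw^k)_{Q_1} = 0$). A direct calculation confirms that the $Q_1$-averages of $\bw^k_s$, $D\bw^k$, $D^2\bw^k$ vanish, that the normalization $E(x_k, t_k, r_k) = \lambda_k^2$ becomes $\ffint_{Q_1}(|\bw^k_s|^2 + |D\bw^k|^2 + |D^2\bw^k|^2)\,dy\,ds = 1$, and that the bad inequality translates exactly into $\hat E_{\bw^k}(\vartheta) > 1/2$, where $\hat E_{\bw^k}$ is the natural $\vartheta$-scaled excess for $\bw^k$.

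Substituting the ansatz into $D\psi(\bv_t) = \Delta\bv$ produces the rescaled equation
\[
\frac{D\psi(a_k + \lambda_k \bw^k_s) - D\psi(a_k)}{\lambda_k} = \Delta_y \bw^k + \frac{\mathrm{tr}(C_k) - D\psi(a_k)}{\lambda_k}.
\]
Averaging $D\psi(\bv_t) = \mathrm{tr}(D^2\bv)$ over $Q_{r_k}$ and Taylor-expanding $D\psi$ at $a_k$ gives $\mathrm{tr}(C_k) - D\psi(a_k) = O(\lambda_k^2)$, so the inhomogeneity is $O(\lambda_k) \to 0$. The left-hand side is the gradient of a uniformly convex function whose Hessian bounds are still $\alpha, A$, so each $\bw^k$ is an interior weak solution of a doubly-nonlinear system of the form \eqref{mainPDE} with $F(M) = |M|^2/2$. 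Passing to subsequences, $a_k \to a_\infty$, $C_k \to C_\infty$ with $|a_\infty|, |C_\infty| \le L$. Applying the argument of Theorem \ref{BlowLem} in localized form on $Q_1$ (the proof depends only on the convexity of $\psi$ and the weak-form identity, not on the IVP structure) yields strong $L^2(Q_1;\R^m)$ convergence of $\bw^k_s$ to a limit $\bw_s$, and then Corollary \ref{H2Compactness} upgrades the weak convergence of $D^2\bw^k$ to strong $L^2_{\mathrm{loc}}(Q_1;(\Mnn)^m)$. Taking limits in the rescaled PDE, $\bw$ satisfies the linear, constant-coefficient, uniformly parabolic system
\[
D^2\psi(a_\infty)\, \bw_s = \Delta \bw \qquad \text{in } Q_1.
\]

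Standard interior $L^2$-regularity for this limit system makes $\bw$ smooth on $Q_{1/2}$ with Schauder-type control of every derivative by $\|\bw\|_{L^2(Q_1)}$; the vanishing averages plus the unit $H^1 \cap H^2$-type normalization bound the latter by a constant $C_0 = C_0(L, \alpha, A, m, n)$ through Poincar\'e. A second-order parabolic Taylor expansion of $\bw_s$, $D\bw$, and $D^2\bw$ around $(0, 0)$ (treating $|s|^{1/2}$ on the same footing as $|y|$) then yields the decay
\[
\hat E_\bw(\vartheta) \le C_1 \vartheta^2, \qquad \vartheta \in (0, 1/4),
\]
with $C_1 = C_1(L, \alpha, A, m, n)$. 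The strong convergences established above force $\hat E_{\bw^k}(\vartheta) \to \hat E_\bw(\vartheta)$, so the lower bound $\hat E_{\bw^k}(\vartheta) > 1/2$ carries to $\hat E_\bw(\vartheta) \ge 1/2$. Choosing $\vartheta$ small enough that $C_1 \vartheta^2 < 1/2$ delivers the contradiction, and corresponding admissible $\epsilon, \rho$ are then read off by tracking the dependencies in the compactness step.

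The main obstacle is the strong compactness: the nonlinear term $\lambda_k^{-1}[D\psi(a_k + \lambda_k \bw^k_s) - D\psi(a_k)]$ cannot be linearized under mere weak $L^2$ convergence of $\bw^k_s$, and the excess $\hat E_{\bw^k}(\vartheta)$ is a quadratic functional of $\bw^k_s, D\bw^k, D^2\bw^k$ that likewise requires strong convergence to pass to the limit. The crucial input is that the proof of Theorem \ref{BlowLem}, which trades weak convergence for strong via the convexity of $\psi$ and the energy identity, admits an interior localization on $Q_1$ (testing against cutoffs of $\bw^k_s$ and using a local energy inequality in the style of Proposition \ref{DVTProp}) so that it applies to the rescaled, possibly non-IVP sequence $\bw^k$; once that point is checked, Corollary \ref{H2Compactness} gives the second-derivative compactness essentially for free, and the remainder is routine linear Schauder theory plus Taylor expansion.
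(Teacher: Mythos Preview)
Your blow-up strategy is exactly the paper's: the same rescaling, the same strong-compactness upgrade (local energy identities to get $\bw^k_s$ strongly, then Corollary \ref{H2Compactness} for $D^2\bw^k$), and the same contradiction via interior smoothness of the constant-coefficient limit. The paper also carries out explicitly, with cutoffs $\eta\in C^\infty_c(Q_1)$, the localized energy computations that you correctly flag as the ``main obstacle'' and defer.

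One point does not go through as written. Your claim $\mathrm{tr}(C_k)-D\psi(a_k)=O(\lambda_k^2)$ needs a quadratic Taylor remainder for $D\psi$, which is not available under the standing hypothesis $\psi\in C^2$ with $\alpha I\le D^2\psi\le A I$; from Lipschitz continuity of $D\psi$ you only get
\[
|\mathrm{tr}(C_k)-D\psi(a_k)|=\left|\ffint_{Q_{r_k}}\bigl(D\psi(\bv_t)-D\psi(a_k)\bigr)\right|\le A\left(\ffint_{Q_{r_k}}|\bv_t-a_k|^2\right)^{1/2}\le A\lambda_k,
\]
so the inhomogeneity $\beta_k:=\lambda_k^{-1}(\mathrm{tr}(C_k)-D\psi(a_k))$ is merely \emph{bounded}, not $o(1)$. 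The paper does not try to make $\beta_k$ vanish: it proves $|\beta_k|\le C$ by testing the weak form against $\beta_k\eta$ for a fixed $\eta\in C^\infty_c(Q_1)$, passes to a subsequential limit $\beta\in\R^m$, and keeps it in the limit system $D^2\psi(a)\bw_s=\Delta\bw+\beta$. This is still linear, constant-coefficient, uniformly parabolic, so the interior regularity and the $\hat E_\bw(\vartheta)\le C\vartheta^2$ decay go through unchanged. With that correction (and the corresponding adjustment that the rescaled equation is not literally of the homogeneous form \eqref{mainPDE} but carries a bounded forcing), your argument coincides with the paper's.
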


\begin{proof}
1. We will argue in order to obtain a contradiction. If the assertion fails to hold, there is a number $L_0>0$ and sequences $(x_k,t_k)\in\Omega\times(0,T)$, $\epsilon_k\rightarrow 0$, $\vartheta_k\equiv\vartheta$ (selected below), and
$r_k\rightarrow 0$ as $k\rightarrow +\infty$  such that
\begin{equation}\label{ContraUno}
\begin{cases}
Q_{r_k}(x_k,t_k)\subset \Omega\times(0,T)\\
|(\bv_t)_{Q_{r_k}}| + |(D^2\bv)_{Q_{r_k}}|\le L_0\\
E(x_k,t_k,r_k)=\epsilon_k^2
\end{cases}
\end{equation}
while
\begin{equation}\label{ContradictionMaybe}
E(x_k,t_k,\vartheta r_k)> \frac{1}{2}\epsilon_k^2.
\end{equation}

\par Define the sequence of mappings
\begin{equation}\label{BlowupSeq}
\bv^k(y,s):=\frac{\bv(x_k+r_ky,t_k + r_k^2s) - (\bv)_{Q_{r_k}} - (\bv_t)_{Q_{r_k}}r^2_ks -(D\bv)_{Q_{r_k}}r_ky -(D^2\bv)_{Q_{r_k}}r_k^2y\cdot y}
{\epsilon_k r^2_k},\nonumber
\end{equation}
where the notation for the product $(D^2\bv)_{Q_{r_k}}y\cdot y$ is specified in \eqref{MMNnotation2}.
As $E(x_k,t_k,r_k)=\epsilon_k^2$, where $E$ is defined above in \eqref{localEE}, $\bv^k$ satisfies 
\begin{equation}\label{vkkbounds}
\fffint|\bv^k_t|^2dyds + \fffint|D\bv^k|^2dyds +\fffint|D^2\bv^k|^2dyds  =1
\end{equation}
for each $k\in \N$.  Moreover, $\bv^k$ is a weak solution of the PDE
\begin{equation}\label{BlowEqn}
D\psi(a_k +\epsilon _k\bv^k_s)=\text{tr}M_k +\epsilon_k \Delta\bv^k.
\end{equation}
Here $a_k:= (\bv_t)_{Q_{r_k}}\in \R^n$, $M_k:= (D^2\bv)_{Q_{r_k}}\in (\Mnn)^m$ and $\text{tr}M_k:=\sum^m_{i=1}\text{tr}\left[(D^2v^i)_{Q_{r_k}}\right]e_i\in \R^m$. By \eqref{ContraUno}, these sequences are bounded independently of $k\in \N$.  Without any loss of generality, we assume $a_k\rightarrow a\in \R^n$ and $M_k\rightarrow M\in (\Mnn)^m$. 

\par 2. Setting 
$$
\psi_k(w):=\frac{\psi(a_k+\epsilon_k w)-\psi(a_k)-D\psi(a_k)\cdot \epsilon_k w}{\epsilon_k^2}
$$
and 
$$
\beta_k:=\frac{\text{tr}M_k-D\psi(a_k)}{\epsilon_k}\in \R^m
$$
allows us to rewrite the PDE \eqref{BlowEqn} as 
\begin{equation}\label{BlowEqnRewrit}
D\psi_k(\bv^k_s)=\Delta\bv^k+\beta_k. 
\end{equation}
Observe $\psi_k$ is uniformly convex and satisfies the same bounds as $\psi$ in \eqref{UnifConv}. Moreover, 
\begin{equation}\label{ConvPsiPsiStark}
\begin{cases}
\psi_k(w)\rightarrow \frac{1}{2}D^2\psi(a)w\cdot w\\
D\psi_k(w)\rightarrow D^2\psi(a)w
\end{cases}, \quad w\in \R^m
\end{equation}
as $k\rightarrow \infty$.  

\par From \eqref{vkkbounds}, we also have the following convergence: there is a $\bv\in L^2\left([-\frac{1}{2},\frac{1}{2}]; H^1(B_1;\R^m)\right)$ with $\bv_s\in L^2(Q_1;\R^m)$ and $D^2\bv\in L^2(Q_1;(\Mnn)^m)$ such that there is a subsequence $\{\bv^{k_j}\}_{j\in \N}$ satisfying
\begin{equation}\label{AlotOfWeak}
\begin{cases}
\bv^{k_j}\rightarrow \bv\quad \text{in}\quad C([-\frac{1}{2},\frac{1}{2}]; L^2(B_1;\R^m))\\
\bv^{k_j}(\cdot, s)\rightharpoonup \bv(\cdot, s)\quad \text{in}\quad H^1(B_1;\R^m)), \quad |s|\le 1/2\\
\bv^{k_j}_s\rightharpoonup \bv_s\quad \text{in}\quad L^2(Q_1;\R^m)\\
D\psi_k(\bv^{k_j}_s)\rightharpoonup D^2\psi(a)\bv_s\quad \text{in}\quad L^2(Q_1;\R^m)\\
D^2\bv^{k_j}\rightharpoonup D^2\bv\quad \text{in}\quad L^2(Q_1;(\Mnn)^m)\\
\end{cases}.
\end{equation}
Moreover,
\begin{equation}\label{WeakL22bounds}
\fffint|\bv_t|^2dyds + \fffint|D\bv|^2dyds+ \fffint|D^2\bv|^2dyds \le 1.
\end{equation}
\par The weak formulation of \eqref{BlowEqnRewrit} is 
$$
\int_{B_1}\beta_k\cdot \phi(y)dy=\int_{B_1}D\psi_k(\bv^{k}_s(y,s))\cdot \phi(y)dy+\int_{B_1}D\bv^{k}(y,s)\cdot D\phi(y)dy
$$
for each $\phi\in H^1_0(B_1)$ and almost every $|s|<1/2$. It is not hard to see that $\beta_k$ is necessarily bounded.
For instance, we can choose $\phi=\beta_k\eta(\cdot,s)$, for $\eta\in C^\infty_c(Q_1)$ with $\int_{Q_1}\eta=1$ and integrate
the above identity over $s\in [-\frac{1}{2},\frac{1}{2}]$ to obtain 
$$
|\beta_k|^2\le  C \max\{|\eta|_{L^\infty},|D\eta|_{L^\infty}\} |\beta_k|\left\{\iint_{Q_1}\left(|\bv^{k}_s|+|D\bv^{k}|\right)dyds\right\}.
$$
In view of the weak convergence assertions \eqref{AlotOfWeak}, we may assume without loss of generality that $\beta_{k_j}\rightarrow \beta$ in $\R^m$. Passing to the 
limit $k=k_j\rightarrow \infty$ in \eqref{BlowEqnRewrit} gives that $\bv$ solves the linear parabolic equation 
\begin{equation}\label{BlowEqnLimit}
D^2\psi(a)\bv_s=\Delta\bv+\beta. 
\end{equation}
In particular, $\bv\in C^\infty(Q_1; \R^m)$. 

\par 3. Let us now study further the compactness properties of the sequence $\{\bv^{k_j}\}_{j\in \N}$. Again let  $\eta\in C^\infty_c(Q_1)$ be nonnegative. Integrating by 
parts and employing \eqref{BlowEqnRewrit} gives
\begin{align*}
\int_{Q_1}\eta|D\bv|^2dyds &=\int_{Q_1}\left\{\eta|D\bv^{k_j}|^2 +2\eta D\bv^{k_j}\cdot(D\bv-D\bv^{k_j})+\eta|D\bv-D\bv^{k_j}|^2 \right\}dyds\\
&=\int_{Q_1}\left\{\eta|D\bv^{k_j}|^2 -2D\eta\cdot D\bv^{k_j}(\bv-\bv^{k_j})-2\eta (D\psi(\bv^{k_j}_s)-\beta_k)\cdot(\bv-\bv^{k_j}) \right. \\
& \hspace{2in}\left. + \eta|D\bv-D\bv^{k_j}|^2 \right\}dyds.
\end{align*}
Using that $\bv^{k_j}\rightarrow \bv$ in $L^2(Q_1;\R^m)$ and the weak convergence of $D\bv^{k_j}$ in $L^2(Q_1; \Mmn)$, we send $j\rightarrow \infty$ above to get
\begin{align*}
\int_{Q_1}\eta|D\bv|^2dyds &\ge \liminf_{k\rightarrow\infty}\int_{Q_1}\eta|D\bv^{k_j}|^2 + \liminf_{k\rightarrow \infty}\int_{Q_1}\eta|D\bv-D\bv^{k_j}|^2dyds \nonumber \\
&\ge \int_{Q_1}\eta|D\bv|^2dyds+ \liminf_{k\rightarrow \infty}\int_{Q_1}\eta|D\bv-D\bv^{k_j}|^2dyds.
\end{align*}
It follows, that $D\bv^{k_j}\rightarrow D\bv$ in $L^2_\text{loc}(Q_1;\Mmn)$. Without loss of generality, we may assume $D\bv^{k_j}(\cdot,s)\rightarrow D\bv(\cdot,s)$ in $L^2(B_1; \Mmn)$
for almost every $|s|<1/2$ (since this convergence occurs for a subsequence). 

\par 4. Differentiating the integral $\int_{B_1}\frac{1}{2}\eta(y,s)|D\bv^{k_j}(y,s)|^2dy$ with respect to $s$, integrating by parts and making use of \eqref{BlowEqnRewrit} gives the energy identity:
\begin{align*}
\int^s_{-1/2}\int_{B_1}\eta(\psi_k(\bv^{k_j}_s)+\psi^*_k(D\psi_k(\bv^{k_j}_s)))dyds - \int^s_{-1/2}\int_{B_1}\eta \bv^{k_j}_s\cdot \beta_kdyds \\
 + \int_{B_1}\frac{1}{2}\eta(y,s)|D\bv^{k_j}(y,s)|^2dy= \int^s_{-1/2}\int_{B_1}\left\{ \frac{\eta_s}{2} |D\bv^{k_j}|^2 -D\eta\cdot D\bv^{k_j}\bv^{k_j}_s \right\}dyds
\end{align*}
for $s\in [-1/2,1/2]$.  Here we used the equality $D\psi_k(w)\cdot w=\psi_k(w)+\psi_k^*(D\psi_k(w))$. Assuming $s$ is a time where $D\bv^{k_j}(\cdot,s)\rightarrow D\bv(\cdot,s)$ in $L^2(B_1; \Mmn)$, we can send $j\rightarrow \infty$ above to arrive at
\begin{align}\label{IfEqualThenCompact}
\int^s_{-1/2}\int_{B_1}\eta\left(\frac{1}{2}D^2\psi(a)\bv_s\cdot \bv_s+\frac{1}{2}D^2\psi(a)\bv_s\cdot \bv_s\right) dyds - \int^s_{-1/2}\int_{B_1}\eta \bv_s\cdot \beta dyds \nonumber \\
 + \int_{B_1}\frac{1}{2}\eta(y,s)|D\bv(y,s)|^2dy\le \int^s_{-1/2}\int_{B_1}\left\{ \frac{\eta_s}{2} |D\bv|^2 -D\eta\cdot D\bv\bv_s \right\}dyds.
\end{align}
Note in the inequality above, we employed the convergence \eqref{ConvPsiPsiStark} and \eqref{AlotOfWeak} to conclude
$$
\begin{cases}
\liminf_{j\rightarrow \infty}\iint_{Q_1}\psi_k(\bv^{k_j}_s)\eta dyds  \ge \iint_{Q_1}\frac{1}{2}D^2\psi(a)\bv_s\cdot \bv_s \eta dyds & \\
\liminf_{j\rightarrow \infty}\iint_{Q_1}\psi^*_k(D\psi_k(\bv^{k_j}_s))\eta dyds \ge \iint_{Q_1}\frac{1}{2}D^2\psi(a)\bv_s\cdot \bv_s \eta dyds &
\end{cases}.
$$

\par Performing a similar computation with  $\int_{B_1}\frac{1}{2}\eta(y,s)|D\bv(y,s)|^2dy$ and using that $\bv$ satisfies \eqref{BlowEqnRewrit} gives 
\begin{align*}
\int^s_{-1/2}\int_{B_1}\eta D^2\psi(a)\bv_s\cdot \bv_s dyds - \int^s_{-1/2}\int_{B_1}\eta \bv_s\cdot \beta dyds \\
 + \int_{B_1}\frac{1}{2}\eta(y,s)|D\bv(y,s)|^2dy= \int^s_{-1/2}\int_{B_1}\left\{ \frac{\eta_s}{2} |D\bv|^2 -D\eta\cdot D\bv\bv_s \right\}dyds.
\end{align*}
Upon comparing this equality with \eqref{IfEqualThenCompact}, and using the uniform convexity of the sequence $\{\psi_k\}_{k\in \N}$, we find
$$
\begin{cases}
\liminf_{j\rightarrow \infty}\iint_{Q_1}\psi_k(\bv^{k_j}_s)\eta dyds =\iint_{Q_1}\frac{1}{2}D^2\psi(a)\bv_s\cdot \bv_s \eta dyds\\
\liminf_{j\rightarrow \infty}\iint_{Q_1}\psi^*_k(D\psi_k(\bv^{k_j}_s))\eta dyds=\iint_{Q_1}\frac{1}{2}D^2\psi(a)\bv_s\cdot \bv_s \eta dyds
\end{cases}.
$$
In particular, after passing to another subsequence, we have $\bv^{k_j}_s\rightarrow \bv_s$ in $L^2_{\text{loc}}(Q_1;\R^m)$. Furthermore, we may now apply Proposition \ref{H2Compactness} 
to deduce 
\begin{equation}\label{H2CompactnessUse}
D^2\bv^{k_j}\rightarrow D^2\bv \quad \text{in}\quad L^2_{\text{loc}}(Q_1;(\Mmn)^m).
\end{equation}
Of course \eqref{H2CompactnessUse} is only guaranteed to hold for a subsequence that we will not relabel. 

\par 5. Multiplying equation \eqref{BlowEqnLimit} by $\eta \bv$ and integrating by parts gives 
the identity 
\begin{align*}
\int_{B_1}\eta(y,s)\frac{1}{2}D^2\psi(a)\bv(y,s)\cdot \bv(y,s)dy +\int^s_{-1/2}\int_{B_1}\eta |D\bv|^2 dyd\tau =\\
+\int^s_{-1/2}\int_{B_1}\left( \eta_s\frac{1}{2}D^2\psi(a)\bv\cdot\bv +\Delta \eta 
\frac{1}{2}|\bv|^2 +\eta\bv\cdot\beta \right)dyd\tau.
\end{align*}
In particular, every space-time derivative of $\bv$ will satisfy \eqref{BlowEqnLimit} and the above identity without terms involving $\beta$.  As a result, we can bound each higher 
space-time derivative of $\bv$ in terms of lower derivatives, which ultimately can be bounded by a universal constant in view of the bounds \eqref{WeakL22bounds}. 
Consequently, there is a constant $C$ depending only $\alpha, A$ in \eqref{UnifConv} such that 
\begin{align*}
\fthetaint|\bv_s-(\bv_s)_{Q_{\vartheta}}|^2dyds + \fthetaint\;\;\left|\frac{D\bv-(D\bv)_{Q_\vartheta}-(D^2\bv)_{Q_\vartheta}y}{\vartheta}\right|^2dyds \\
\hspace{1in}+\fthetaint|D^2\bv-(D^2\bv)_{Q_{\vartheta}}|^2dyds\le C\vartheta^2.
\end{align*}
We choose $\vartheta_k\equiv\vartheta$ so small that $C\vartheta^2\le 1/4$. 

\par By the compactness established above (in particular \eqref{H2CompactnessUse}), we have for all sufficiently large $j$ 
\begin{align*}
\fthetaint|\bv^{k_j}_s-(\bv^{k_j}_s)_{Q_{\vartheta}}|^2dyds + \fthetaint\;\;\left|\frac{D\bv^{k_j}-(D\bv^{k_j})_{Q_\vartheta}-(D^2\bv^{k_j})_{Q_\vartheta}y}{\vartheta}\right|^2dyds\\
\hspace{1in}+\fthetaint|D^2\bv^{k_j}-(D^2\bv^{k_j})_{Q_{\vartheta}}|^2dyds \le \frac{3}{8}.
\end{align*}
However, it is readily verified that inequality \eqref{ContradictionMaybe} implies that the left hand side of the inequality above is larger than 1/2 for all $k\in \N$. 
Therefore, we have the sought after contradiction.   
\end{proof}
\begin{rem} Generalizing the above argument to equation \eqref{mainPDE} involves studying large $k$ limits of the system 
$$
D\psi(a_k+\epsilon_k\bv^k_s)=\frac{1}{r_k}\Div_y DF(\xi_k+r_k M_k y +\epsilon_k r_k D\bv^k).
$$ 
Here $\xi_k:=(D\bv)_{Q_{r_k}}$.    
\end{rem}
We now seek to iterate Lemma \ref{BlowUpLemma}. First let us recall a basic fact about the decay of averages of $\bv_t$ and 
$D^2\bv$.  Observe for $\tau\in (0,1]$ and $Q_r=Q_r(x,t)\subset \Omega\times(0,T)$, 
\begin{align}\label{IterationVT}
|(\bv_t)_{Q_{\tau r}}- (\bv_t)_{Q_r}|&\le \left(\intQrtau \; |\bv_t - (\bv_t)_{Q_r}|^2\right)^{1/2}\nonumber\\
&\le \frac{1}{\tau^{n/2+1}}\left(\ffint \; |\bv_t - (\bv_t)_{Q_r}|^2\right)^{1/2} \nonumber \\
&\le \frac{1}{\tau^{n/2+1}}E(x,t,r)^{1/2}.
\end{align}
Likewise, 
\begin{equation}\label{IterationD2V}
\left|(D^2\bv)_{Q_{\tau r}}- (D^2\bv)_{Q_r}\right|\le \frac{1}{\tau^{n/2+1}}E(x,t,r)^{1/2}.
\end{equation}
\begin{cor}
Assume $\bv$ is a solution of \eqref{PDEex2}.  Let $L>0$ and select $\epsilon,\vartheta,\rho$ as in Lemma \ref{BlowUpLemma}. If
\begin{equation}\label{IterAssump}
\begin{cases}
Q_r(x,t)\subset \Omega\times(0,T), \quad r<\rho\\
|(\bv_t)_{Q_r}|, |(D^2\bv)_{Q_r}|< \frac{1}{2}L\\
E(x,t,r)< \epsilon_1^2
\end{cases},
\end{equation}
where $\epsilon_1:=\min\left\{\epsilon, \sqrt{\frac{\vartheta^{n/2+1}}{2}L}\right\}$, then 
\begin{equation}\label{InterationK}
\begin{cases}
|(\bv_t)_{Q_{\vartheta^kr} }|, |(D^2\bv)_{Q_{\vartheta^k r}}|< L\\
E(x,t,\vartheta^{k}r)\le \frac{1}{2^{k}}E(x,t,r)
\end{cases}
\end{equation}
for each $k\in \N$.
\end{cor}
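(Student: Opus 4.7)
The plan is to argue by induction on $k$. The statement consists of two assertions at each scale $\vartheta^k r$: a decay of the excess $E(x,t,\vartheta^k r)$ and a uniform bound on the averages $(\bv_t)_{Q_{\vartheta^k r}}$ and $(D^2\bv)_{Q_{\vartheta^k r}}$. The decay is essentially an immediate consequence of Lemma \ref{BlowUpLemma} once its hypotheses are checked; the bound on the averages is the delicate part and requires a telescoping argument.

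For the base case $k=1$, the three conditions in \eqref{IterAssump} imply the three conditions of Lemma \ref{BlowUpLemma} (since $\epsilon_1 \le \epsilon$ and $L/2 \le L$), so $E(x,t,\vartheta r) \le \tfrac{1}{2}E(x,t,r)$. Then inequalities \eqref{IterationVT} and \eqref{IterationD2V} applied with $\tau=\vartheta$ give
\[
|(\bv_t)_{Q_{\vartheta r}}| \le |(\bv_t)_{Q_r}| + \vartheta^{-(n/2+1)} E(x,t,r)^{1/2} < \tfrac{L}{2} + \vartheta^{-(n/2+1)} \epsilon_1,
\]
and similarly for $(D^2\bv)_{Q_{\vartheta r}}$. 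The choice of $\epsilon_1$ is designed so that the second term stays below $L/2$, giving the desired bound at the first step.

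For the inductive step, assume \eqref{InterationK} holds up through level $k$. Since $E(x,t,\vartheta^k r) \le 2^{-k}\epsilon_1^2 \le \epsilon^2$, the averages are bounded by $L$, and $\vartheta^k r < r < \rho$, the hypotheses of Lemma \ref{BlowUpLemma} hold at the radius $\vartheta^k r$. The lemma therefore yields $E(x,t,\vartheta^{k+1} r) \le \tfrac{1}{2}E(x,t,\vartheta^k r) \le 2^{-(k+1)} E(x,t,r)$, which is the first piece of \eqref{InterationK} at step $k+1$. For the average bounds, I telescope:
\[
|(\bv_t)_{Q_{\vartheta^{k+1}r}}| \le |(\bv_t)_{Q_r}| + \sum_{j=0}^{k} \bigl|(\bv_t)_{Q_{\vartheta^{j+1}r}} - (\bv_t)_{Q_{\vartheta^j r}}\bigr| \le \tfrac{L}{2} + \vartheta^{-(n/2+1)}\sum_{j=0}^{\infty} 2^{-j/2} \epsilon_1,
\]
where I have used \eqref{IterationVT} at scale $\vartheta^j r$ with $\tau=\vartheta$ together with the already established decay $E(x,t,\vartheta^j r) \le 2^{-j}\epsilon_1^2$. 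The geometric series converges to an absolute constant times $\epsilon_1$, and by the definition of $\epsilon_1$ this contribution is bounded by $L/2$, so the full quantity is $<L$. The same argument with \eqref{IterationD2V} handles $(D^2\bv)_{Q_{\vartheta^{k+1}r}}$.

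The main obstacle is the stability of the smallness hypotheses under iteration: every time we shrink the scale, the averages of $\bv_t$ and $D^2\bv$ shift, and we must be sure that after infinitely many applications of Lemma \ref{BlowUpLemma} they never exceed $L$. This is exactly why $\epsilon_1$ is taken smaller than $\epsilon$ by a factor depending on $\vartheta$ and $L$, so that the total telescoping drift is dominated by a convergent geometric series whose sum stays below $L/2$.
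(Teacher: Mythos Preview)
Your approach is the same as the paper's: induction on $k$, with Lemma \ref{BlowUpLemma} supplying the decay of $E$ at each step and a telescoping sum via \eqref{IterationVT}--\eqref{IterationD2V} controlling the drift of the averages.

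There is one quantitative slip worth noting. Your final claim is that $\vartheta^{-(n/2+1)}\sum_{j\ge 0}2^{-j/2}\,\epsilon_1\le L/2$ ``by the definition of $\epsilon_1$,'' but with the stated $\epsilon_1=\min\bigl\{\epsilon,\sqrt{\tfrac12\vartheta^{n/2+1}L}\bigr\}$ this does not follow: that choice bounds $\epsilon_1^2/\vartheta^{n/2+1}$ by $L/2$, not $(2+\sqrt2)\,\epsilon_1/\vartheta^{n/2+1}$. The paper's write-up has the mirror-image discrepancy---its telescoping line records the increments as $\vartheta^{-(n/2+1)}E(x,t,\vartheta^k r)$ rather than $\vartheta^{-(n/2+1)}E(x,t,\vartheta^k r)^{1/2}$, and the stated $\epsilon_1$ is tailored to that (erroneous) version. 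In either case the fix is cosmetic: redefine $\epsilon_1$ so that $(2+\sqrt2)\,\vartheta^{-(n/2+1)}\epsilon_1\le L/2$, and the induction goes through exactly as you describe.
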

\begin{proof}
We will argue by induction. The case $k=1$ was verified in the previous lemma and inequalities \eqref{IterationVT} and \eqref{IterationD2V}. Let us now assume that \eqref{InterationK} holds for each $k=1,2,\dots, j\ge 1$. By \eqref{IterationVT}, 
\begin{align*}
|(\bv_t)_{Q_{\vartheta^{j+1}r} }| & \le \sum^{j-1}_{k=0}|(\bv_t)_{Q_{\vartheta^{k+1}r} }-(\bv_t)_{Q_{\vartheta^{k}r} }|+|(\bv_t)_{Q_{ r} }|
\\
& \le \sum^{j+1}_{k=1}\frac{1}{\vartheta^{n/2+1}}E(x,t,\vartheta^k r)+|(\bv_t)_{Q_{ r} }|\\
& < \sum^{j+1}_{k=1}\frac{1}{\vartheta^{n/2+1}}\frac{1}{2^k}E(x,t,r)+\frac{1}{2}L\\
&\le \frac{\epsilon_1^2}{\vartheta^{n/2+1}}+\frac{1}{2}L\\
&\le L.
\end{align*}
Likewise, we employ \eqref{IterationD2V} to conclude $|(D^2\bv)_{Q_{\vartheta^{j+1}r} }|<L$. As 
$$
E(x,t,\vartheta^{j}r)\le \frac{1}{2^{j}}E(x,t,r)\le \frac{1}{2^j}\epsilon_1^2<\epsilon^2,
$$
the previous lemma implies 
$$
E(x,t,\vartheta^{j+1}r)=E(x,t,\vartheta(\vartheta^{j}r))\le \frac{1}{2}E(x,t,\vartheta^{j}r)\le\frac{1}{2^{j+1}}E(x,t,r).
$$
This verifies the claim. 
\end{proof}
\begin{cor}
Assume $\bv$ is a solution of \eqref{PDEex2}. Let $L>0$ and suppose there are $(x,t)\in \Omega\times(0,T)$ and $r>0$ as in \eqref{IterAssump} of the previous corollary.  Then there exist constants $C\ge 0$, 
$\rho_1\in (0,\rho)$, $\alpha\in (0,1)$ depending on $L$ and a neighborhood $O\subset\Omega\times(0,T)$ of $(x,t)$ such that 
\begin{equation}\label{DecayofE}
E(y,s,R)\le CR^\alpha, \quad R\in (0,\rho_1),\quad (y,s)\in O. 
\end{equation}
\end{cor}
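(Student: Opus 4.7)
\medskip

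\noindent\textbf{Proof proposal.} The plan is to first observe that the hypotheses of the previous corollary are open in the center point, then iterate to get geometric decay along the dyadic sequence $\vartheta^k r$, and finally fill in the intermediate radii by a standard Campanato-type comparison.

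\emph{Step 1 (openness of the hypotheses).} Because $\bv_t, D\bv, D^2\bv$ all lie in $L^2_{\text{loc}}(\Omega\times(0,T))$, the maps
$$
(y,s)\mapsto (\bv_t)_{Q_r(y,s)},\qquad (y,s)\mapsto (D^2\bv)_{Q_r(y,s)},\qquad (y,s)\mapsto E(y,s,r)
$$
are continuous on any compact subset of the set of centers for which $Q_r(y,s)\subset\Omega\times(0,T)$. The hypotheses \eqref{IterAssump} hold strictly at $(x,t)$, so there is a neighborhood $O\subset\Omega\times(0,T)$ of $(x,t)$ on which \eqref{IterAssump} continues to hold with the same radius $r$ and the same $L$. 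I would also shrink $O$ so that $Q_r(y,s)\subset \Omega\times(0,T)$ for every $(y,s)\in O$.

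\emph{Step 2 (geometric decay at dyadic radii).} Applying the previous corollary pointwise for each $(y,s)\in O$ yields
$$
E(y,s,\vartheta^k r)\le 2^{-k}E(y,s,r)\le 2^{-k}\epsilon_1^2\qquad\text{for all } k\in\N,
$$
together with the uniform bounds $|(\bv_t)_{Q_{\vartheta^k r}(y,s)}|,\,|(D^2\bv)_{Q_{\vartheta^k r}(y,s)}|<L$.

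\emph{Step 3 (passing to continuous radii).} Given $R\in(0,r)$, choose $k\in\N_0$ with $\vartheta^{k+1}r< R\le \vartheta^k r$. The first and third terms of $E(y,s,R)$ can be controlled by the corresponding terms of $E(y,s,\vartheta^k r)$ using that the mean minimizes the $L^2$ distance to a constant, together with the trivial volume comparison $|Q_{\vartheta^k r}|/|Q_R|\le \vartheta^{-(n+2)}$. The middle term requires a little more care: writing the Taylor-type polynomial
$$
P_\varrho(y):=(D\bv)_{Q_\varrho(y_0,s_0)}+(D^2\bv)_{Q_\varrho(y_0,s_0)}(y-y_0),
$$
I would split
$$
\ffintXT\!\!\!\Big|\tfrac{D\bv-P_R}{R}\Big|^2 \le 2\vartheta^{-(n+4)}E(y,s,\vartheta^k r) + 2 R^{-2}\!\sup_{Q_R}|P_R-P_{\vartheta^k r}|^2,
$$
and estimate $|(D\bv)_{Q_R}-(D\bv)_{Q_{\vartheta^k r}}|$ and $|(D^2\bv)_{Q_R}-(D^2\bv)_{Q_{\vartheta^k r}}|$ by $C E(y,s,\vartheta^k r)^{1/2}$ via a telescoping argument modeled on \eqref{IterationVT}--\eqref{IterationD2V}. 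Combining the three terms gives $E(y,s,R)\le C_0\, E(y,s,\vartheta^k r)$ for a constant $C_0=C_0(\vartheta,n)$.

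\emph{Step 4 (the H\"older exponent).} Choosing $\alpha:=\log 2/\log(1/\vartheta)\in(0,1)$, the inequality $\vartheta^{k+1}r<R$ gives $2^{-k}\le 2(R/r)^\alpha$, so that
$$
E(y,s,R)\le C_0\cdot 2^{-k}\epsilon_1^2\le C R^\alpha
$$
for all $(y,s)\in O$ and $R\in(0,\rho_1)$ with $\rho_1:=r$ (and $C$ depending on $L$, $\epsilon_1$, $\vartheta$, $n$, $r$). This is the claim.

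\emph{Main obstacle.} The only nontrivial calculation is the comparison in Step 3 for the middle term of $E$, because unlike the first and third terms it is not simply the $L^2$ distance of $D\bv$ to a constant on $Q_R$; it is the distance to a specific affine function whose coefficients change with the radius. All the rest of the argument is structural and follows the standard Campanato iteration once the strict openness of \eqref{IterAssump} is in place.
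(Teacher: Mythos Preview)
Your proposal is correct and follows essentially the same route as the paper: openness of the hypotheses in the center point, geometric decay at the dyadic radii $\vartheta^k r$ via the previous corollary, comparison at intermediate radii, and conversion of $2^{-k}$ to a power of $R$ with exponent $\alpha=\log 2/\log(1/\vartheta)$. The only cosmetic differences are that the paper places the openness argument last rather than first, and that in Step~3 the paper writes out the comparison as two explicit inequality chains (one for the first and third terms of $E$, one for the middle term) rather than describing it conceptually as you do; in particular, for the middle term the paper applies the first chain to $f-(Df)_{Q_R}\cdot(y-x)$ and then controls $|(Df)_{Q_R}-(Df)_{Q_{\vartheta^k r}}|$ via the third term of $E$, which is exactly the content of your polynomial-difference estimate.
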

\begin{proof}
1. We first establish a few useful assertions. Let $R<r$ and choose $k\in \N$ so that $\vartheta^{k+1}r<R\le \vartheta^{k}r$. Observe that for any $f\in L^2_\text{loc}(\Omega\times(0,T))$ 
\begin{align}\label{IntegralDEC}
\left(\ffRint|f-f_{Q_R}|^2\right)^{1/2}&\le \left(\ffRint|f-f_{Q_{\vartheta^{k}r}}|^2\right)^{1/2}+|f_{Q_{\vartheta^{k}r}}-f_{Q_R}| \nonumber \\
&\le \frac{1}{\vartheta^{n/2+1}} \left(\intQvarkr|f-f_{Q_{\vartheta^{k}r}}|^2\right)^{1/2}+|f_{Q_{\vartheta^{k}r}}-f_{Q_R}|\nonumber \\
&\le \frac{1}{\vartheta^{n/2+1}} \left(\intQvarkr|f-f_{Q_{\vartheta^{k}r}}|^2\right)^{1/2}+ \left(\ffRint|f-f_{Q_{\vartheta^{k}r}}|^2\right)^{1/2}\nonumber \\
&\le \frac{2}{\vartheta^{n/2+1}} \left(\intQvarkr|f-f_{Q_{\vartheta^{k}r}}|^2\right)^{1/2}.
\end{align}
All parabolic cylinders above and below are assumed to be centered at $(x,t)$. 

\par 2. Now assume additionally that $Df\in L^2_\text{loc}(\Omega\times(0,T);\R^n)$. Note that $f -  (Df)_{Q_R}\cdot (y-x)$ has the same average of $f$ over {\it any} cylinder included in 
$\Omega\times(0,T)$ that is also centered at $(x,t)$. The above computation applied to $f -  (Df)_{Q_R}\cdot (y-x)$ 
implies
\begin{align}\label{IntegralDEC2}
I&:=\left(\ffRint\;\;\left|\frac{f-f_{Q_R}-(Df)_{Q_R}\cdot (y-x)}{R}\right|^2\right)^{1/2} \nonumber \\
&= \left(\ffRint\;\;\left|\frac{\left[f-(Df)_{Q_R}\cdot (y-x)\right]-f_{Q_R}}{R}\right|^2\right)^{1/2} \nonumber \\
&\le \frac{2}{\vartheta^{n/2+1}} \left(\intQvarkr\;\;\;\;\left|\frac{\left[f-(Df)_{Q_R}\cdot (y-x)\right]-f_{Q_{\vartheta^kr}}}{R}\right|^2\right)^{1/2} \nonumber \\
&= \frac{2}{\vartheta^{n/2+1}} \left(\intQvarkr\;\;\;\;\left|\frac{f-f_{Q_{\vartheta^kr}}-(Df)_{Q_R}\cdot (y-x) }{R}\right|^2\right)^{1/2}\nonumber \\
&\le \frac{2}{\vartheta^{n/2+2}} \left(\intQvarkr\;\;\;\;\left|\frac{f-f_{Q_{\vartheta^kr}}-(Df)_{Q_R}\cdot (y-x) }{\vartheta^kr}\right|^2\right)^{1/2} \nonumber \\
&\le \frac{2}{\vartheta^{n/2+2}} \left(\intQvarkr\;\;\;\;\left|\frac{f-f_{Q_{\vartheta^kr}}-(Df)_{Q_{\vartheta^kr}}\cdot (y-x) }{\vartheta^kr}\right|^2\right)^{1/2} \nonumber\\
&\quad \quad + \frac{2}{\vartheta^{n/2+2}}|(Df)_{Q_{\vartheta^kr}}-(Df)_{Q_R}| \nonumber \\
& \le \frac{2}{\vartheta^{n/2+2}} \left(\intQvarkr\;\;\;\;\left|\frac{f-f_{Q_{\vartheta^kr}}-(Df)_{Q_{\vartheta^kr}}\cdot (y-x) }{\vartheta^kr}\right|^2\right)^{1/2}\nonumber \\
& \quad \quad + \frac{2}{\vartheta^{n/2+2}}\frac{1}{\vartheta^{n/2+1}}\left(\intQvarkr|Df-(Df)_{Q_{\vartheta^{k}r}}|^2\right)^{1/2}\nonumber\\
& \le \frac{2}{\vartheta^{n+3}}\left\{\left(\intQvarkr\;\;\;\;\left|\frac{f-f_{Q_{\vartheta^kr}}-(Df)_{Q_{\vartheta^kr}}\cdot (y-x) }{\vartheta^kr}\right|^2\right)^{1/2}\nonumber \right.\\
& \quad \quad\quad  \left. + \left(\intQvarkr|Df-(Df)_{Q_{\vartheta^{k}r}}|^2\right)^{1/2}\right\}.
\end{align}
\par 3. Letting $f=v^i_t, D^2v^i$ in \eqref{IntegralDEC} and summing over $i=1,\dots, m$ and letting $f=v^i_{x_j}$ in  \eqref{IntegralDEC2} and summing over $i=1,\dots,m, j=1,\dots,n$ gives
\begin{align*}
E(x,t,R)&\le \frac{c_0}{\vartheta^{2(n+3)}}E(x,t,\vartheta^k r)
\end{align*}
for some universal constant $c_0$. Applying the previous corollary
\begin{align*}
E(x,t,R)&\le \frac{c_0}{\vartheta^{n+3}} \frac{1}{2^k}E(x,t,r)\\
& \le \frac{c_0\epsilon_1^2}{\vartheta^{2(n+3)}} \frac{1}{2^k}\\
&\le \frac{2c_0\epsilon_1^2}{\vartheta^{2(n+3)}} e^{-(k+1) \log 2}\\
&\le \frac{2c_0\epsilon_1^2}{\vartheta^{2(n+3)}} \left(\frac{R}{r}\right)^\frac{\log 1/2}{\log\vartheta}.
\end{align*}
As $\Omega\times(0,T)\ni (y,s)\mapsto (\bv_t)_{Q_r(y,s)}$, $(D^2\bv)_{Q_r(y,s)}$, and $E(y,s,r)$ are continuous, there is a $\rho_1>0$ and a neighborhood $O$ of $(x,t)$ for which \eqref{IterAssump}
holds for each $(y,s)\in O$ and $r<\rho_1$. We can then repeat the same computation above to conclude \eqref{DecayofE}. 
\end{proof}

\begin{proof}(of Theorem \ref{PartialRegThm})
Let $\Sigma$ denote the set of points $(x,t)$ for which the following limits hold
$$
\begin{cases}
\lim_{r\rightarrow 0^+}(\bv_t)_{Q_r(x,t)}=\bv_t(x,t)\\
\lim_{r\rightarrow 0^+}(D^2\bv)_{Q_r(x,t)}=D^2\bv(x,t)\\
\lim_{r\rightarrow 0^+}E(x,t,r)=0
\end{cases}.
$$
The first two limits each occur on a set of full measure by Lebesgue's differentiation theorem. As for the third limit, recall Poincar\'{e}'s
inequality on a cylinder $Q_r\subset \Omega\times(0,T)$:
$$
\iint_{Q_r}|\bw - (\bw)_{Q_r}|^2dyds \le C\left\{r^4\iint_{Q_r}|\bw_t|^2dyds + r^2\iint_{Q_r}|D\bw|^2dyds\right\}
$$
for $\bw\in H^1(\Omega\times(0,T);\R^m)$. Choosing $\bw=\bv_{x_i}$ and dividing by $r^2$ above gives
$$
E(x,t,r)\le C\left\{ 
\ffint|\bv_t - (\bv_t)_{Q_r}|^2 dyds+ r^2\ffint\;\; \left|D\bv_t\right|^2dyds + \ffint\;\; \left|D^2\bv-(D^2\bv)_{Q_r}\right|^2dyds\right\}.
$$
By the integrability assumption \eqref{DVTbound} and Lebesgue's differentiation theorem, \newline $\lim_{r\rightarrow 0^+}E(x,t,r)=0$ on a set of full Lebesgue measure.

\par It now follows that for each $(x,t)\in \Sigma$ there is an $L>0$ such that \eqref{IterAssump} holds. 
By  \eqref{DecayofE} and a parabolic version of Campanato's criterion \cite{Camp,Prato}, there is a neighborhood  $O$ of $(x,t)$ where $\bv_t, D^2\bv\in C^\alpha(O)$. Hence, the set $S$ of points $(x,t)$
for which there is some neighborhood of $(x,t)$ where $\bv_t, D^2\bv$ are H\"{o}lder continuous has full Lebesgue measure. By definition, this set $S$ is open. 
\end{proof}


\section{Large time asymptotics}\label{LongSEct}
We now consider the large time behavior of solutions starting with the initial and boundary value problem 
\begin{align}\label{mainIBVP}
\begin{cases}
D\psi(\bv_t)=\Div DF(D\bv), &\quad \Omega\times(0,\infty) \\
\hspace{.42in}\bv = \bh, &\quad \partial\Omega\times [0,\infty)  \\ 
\hspace{.42in}\bv =\bg, &\quad \Omega\times\{0\}
\end{cases}
\end{align} 
with a time independent boundary mapping $\bh:\partial\Omega \rightarrow\R^m$. 
Weak solutions of \eqref{mainIBVP} are defined as in Definition \ref{WeakDefn} once we recall that boundary values of $W^{1,p}(\Omega;\R^m)$ mappings are defined using the trace operator. It is also readily checked that the energy identity \eqref{EnergyIdentity2} holds for solutions of \eqref{mainIBVP}. The intuition is that if $\bv(x,t)=\bh(x)$ for $(x,t)\in \partial\Omega\times [0,\infty)$, 
we may differentiate this boundary condition in time to conclude $\bv_t|_{\partial \Omega}=\bzero$ which allows us to repeat the same proof of \eqref{EnergyIdentity2}. Below, we briefly adapt the ideas from section \ref{ITS} to show \eqref{mainIBVP} admits a weak solution.  We will assume $p\in (1,\infty)$ for the remainder of this section. 

\begin{cor}
Suppose $\bg \in W^{1,p}(\Omega;\R^m)$ and $\bh\in L^p(\partial \Omega;\R^m)$. Then there exists a weak solution of \eqref{mainIBVP}. 
\end{cor}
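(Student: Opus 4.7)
The plan is to adapt the implicit time scheme argument underlying Proposition~\ref{ExistProp} to accommodate inhomogeneous Dirichlet data. First I would fix an extension $\bh^* \in W^{1,p}(\Omega;\R^m)$ of $\bh$ to $\Omega$; the problem is only meaningful when such an extension exists, since attaining boundary values $\bh$ in the trace sense requires $\bh$ to lie in the trace space of $W^{1,p}(\Omega;\R^m)$. With $\bv^0 := \bg$, for each $k \ge 1$ I would take $\bv^k_\tau$ to be the unique minimizer over the affine class $\bh^* + W^{1,p}_0(\Omega;\R^m)$ of
\[
\bu \;\longmapsto\; \int_\Omega \left\{ F(D\bu) + \tau\, \psi\!\left(\frac{\bu - \bv^{k-1}_\tau}{\tau}\right) \right\} dx,
\]
whose existence and uniqueness follow from the direct method together with the coercivity \eqref{Coercive} and the strict convexity of $F$ and $\psi$. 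The Euler--Lagrange equation reproduces \eqref{WeakIFT} for test functions $\bw \in W^{1,p}_0(\Omega;\R^m)$.

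Because consecutive iterates share the same boundary trace $\bh$, the difference $\bv^k_\tau - \bv^{k-1}_\tau$ lies in $W^{1,p}_0(\Omega;\R^m)$ and is an admissible test function. Inserting it and invoking convexity of $F$ reproduces the discrete energy inequality \eqref{DiscreteEnergy}. Forming the piecewise constant interpolants $\bv_N$ and piecewise affine interpolants $\bu_N$ as in \eqref{StepApprox} with $\tau = T/N$ for an arbitrary $T > 0$, this inequality yields uniform-in-$N$ bounds on $\partial_t \bu_N$ in $L^p(\Omega \times (0,T);\R^m)$ and on $D\bv_N$ in $L^\infty([0,T]; L^p(\Omega; \Mmn))$.

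The passage to the limit then follows the proof of Theorem~\ref{BlowLem} nearly verbatim. Working with admissible test functions $\bw \in \bh^* + W^{1,p}_0(\Omega;\R^m)$ in the convexity inequality \eqref{ConvexMakesStrong}, and noting that $\bw - \bv_N(\cdot,t) \in W^{1,p}_0(\Omega;\R^m)$ so that the integration by parts underlying \eqref{ConvexMakesStrong} is justified, one identifies the weak limit of $DF(D\bv_N)$ with $DF(D\bv)$ for a suitable limit $\bv$. The energy comparison in step~4 of Theorem~\ref{BlowLem} then upgrades weak $L^p$ convergence of $\partial_t \bu_N$ to strong $L^p$ convergence via strict convexity of $\psi$. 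The limit satisfies $\bv(\cdot,t) - \bh^* \in W^{1,p}_0(\Omega;\R^m)$ for a.e.\ $t$, hence attains the trace $\bh$ on $\partial\Omega$; time continuity in $L^p$ (coming from $\bv_t \in L^p$) delivers $\bv(\cdot,0) = \bg$; and a diagonal extraction in $T \to \infty$ produces the solution on $\Omega \times (0,\infty)$. The main obstacle is purely bookkeeping---verifying that every convexity, monotonicity, and weak-limit step in the proof of Theorem~\ref{BlowLem} is consistent with working in the affine class $\bh^* + W^{1,p}_0$ rather than $W^{1,p}_0$ itself, which it is because all admissibility differences remain in $W^{1,p}_0$.
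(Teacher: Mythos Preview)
Your approach is correct, but the paper takes a different and somewhat more economical route. Rather than rerunning the entire implicit-time-scheme argument on an affine class and invoking a diagonal extraction in $T$, the paper simply solves the inhomogeneous problem on successive unit time intervals: with $\bv^0=\bg$, it produces a weak solution $\bv^{k+1}$ on $\Omega\times(k,k+1)$ with boundary datum $\bh$ and initial datum $\bv^k(\cdot,k)$, invoking a ``minor modification'' of the scheme \eqref{IFT} (essentially exactly the affine-class modification you describe), and then defines the global solution by concatenation, $\bv(\cdot,t):=\bv^k(\cdot,t)$ for $t\in[k-1,k]$. This avoids any diagonal argument in $T$ and keeps the proof to a few lines by leaning on Proposition~\ref{ExistProp} rather than reproving it.

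Your version has the merit of spelling out precisely what that ``minor modification'' is---minimizing over $\bh^*+W^{1,p}_0(\Omega;\R^m)$, noting that consecutive differences remain in $W^{1,p}_0$ so \eqref{DiscreteEnergy} goes through unchanged---and of explaining why the compactness machinery of Theorem~\ref{BlowLem} carries over to the affine setting. The paper's concatenation trick, on the other hand, sidesteps any need to track the affine class through the limit passage and gives the $[0,\infty)$ solution directly without a diagonalization.
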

\begin{proof}
Set $\bv^0=\bg$. Once $\bv^1, \dots, \bv^{k}$ have been determined, we find $\bv^{k+1}$ satisfying
$$
\bv^{k+1}_t\in L^p(\Omega\times[k,k+1]; \R^m), \quad \bv^{k+1}\in L^\infty([k,k+1]; W^{1,p}_0(\Omega;\R^m))
$$
 as a weak solution of the initial and boundary value problem
$$
\begin{cases}
D\psi(\bv_t)=\Div DF(D\bv), &\quad \Omega\times(k,k+1) \\
\hspace{.42in}\bv = \bh, &\quad \partial\Omega\times [k,k+1)  \\ 
\hspace{.42in}\bv =\bv^{k}(\cdot, k), &\quad \Omega\times\{k\}
\end{cases}.
$$
The solution $\bv^{k+1}$ can be constructed using a minor modification of the implicit time scheme \eqref{IFT}.  It is straightforward to verify  
$$
\bv(\cdot,t):=\bv^k(\cdot,t), \quad t\in [k-1,k], \quad k\in \N
$$ 
is a weak solution of \eqref{mainIBVP}. We leave the details to the reader.
\end{proof}

\begin{prop}\label{LargeTimeMan}
Assume $\bg \in W^{1,p}(\Omega;\R^m)$ and $\bh\in L^p(\partial \Omega;\R^m)$, and $\bv$ is a weak solution of \eqref{mainIBVP}. Further suppose condition \eqref{minPSI} holds; that is, $\min_{w\in\R^m}\psi(w)=\psi(0)$. Then the limit
$$
\bw(x):=\lim_{t\rightarrow \infty}\bv(x,t)
$$
exists in $W^{1,p}(\Omega;\R^m)$. Moreover, $\bw$ is a minimizer of the functional $W^{1,p}(\Omega;\R^m)\ni\bu \mapsto \int_\Omega F(D{\bf u})dx$ subject to the boundary 
condition $\bu|_{\partial \Omega}=\bh$.  In particular, 
$$
\begin{cases}
-\text{\normalfont{div}}DF(D\bw) = \bzero,&  \quad x\in \Omega\\
\hspace{.9in} \bw=\bh,& \quad x\in \partial \Omega
\end{cases}. 
$$
\end{prop}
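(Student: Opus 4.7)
The plan is to combine the monotonicity of the energy $t \mapsto \int_\Omega F(D\bv(x,t))\,dx$ with a compactness argument in the spirit of Theorem~\ref{BlowLem} applied to the shifts $\bv_k(x,s) := \bv(x, t_k + s)$, $s \in [0,1]$, for an arbitrary sequence $t_k \to \infty$. Although the boundary data are nonzero, the identity \eqref{EnergyIdentity2} holds in this setting as noted in the paragraph introducing \eqref{mainIBVP}, and since $\psi(0) = \min \psi$ we have $D\psi(w) \cdot w \ge \psi(w) - \psi(0) \ge 0$. Hence $t \mapsto \int_\Omega F(D\bv)\,dx$ is nonincreasing (adapting Corollary~\ref{EntropyLem}) and
$$\int_0^\infty \int_\Omega D\psi(\bv_t)\cdot \bv_t \,dxdt \le \int_\Omega F(D\bg)\,dx < \infty.$$
The coercivity \eqref{Coercive} together with the trace theorem then yields a uniform bound for $\{\bv(\cdot, t)\}_{t \ge 0}$ in $W^{1,p}(\Omega; \R^m)$.

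Next I would extract subsequential limits of the shifts. Theorem~\ref{BlowLem} applies almost verbatim after subtracting a fixed $W^{1,p}$ extension of $\bh$; equivalently, its proof only uses the uniform $W^{1,p}$ bound on the initial data and interior integration by parts, so it carries over to time-independent nonzero boundary data. Consequently a subsequence $\bv_{k_j}$ converges to a weak solution $\tilde\bv$ of \eqref{mainPDE} on $\Omega \times [0,1]$ in the senses of \eqref{weakConv1} and \eqref{StrongConv}, with trace $\bh$ on $\partial\Omega$. The tail bound $\int_{t_{k_j}}^{t_{k_j}+1}\int_\Omega D\psi(\bv_t)\cdot \bv_t\,dxdt \to 0$ combined with the strong $L^p$ convergence $\partial_s \bv_{k_j} \to \tilde\bv_s$ and the growth condition \eqref{GrowthCond} forces
$$\int_0^1\int_\Omega D\psi(\tilde\bv_s)\cdot \tilde\bv_s\,dxds = 0,$$
and since the integrand is nonnegative and vanishes only when $\tilde\bv_s = \bzero$ (by strict convexity of $\psi$ together with $\psi(0) = \min \psi$), the limit $\tilde\bv$ is time-independent.

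Writing $\tilde\bv(x,s) = \tilde\bw(x)$, the weak formulation \eqref{WeakSolnCond} reduces to $\int_\Omega DF(D\tilde\bw)\cdot D\bw\,dx = 0$ for every $\bw \in W^{1,p}_0(\Omega;\R^m)$, i.e., $-\Div DF(D\tilde\bw) = \bzero$ with $\tilde\bw|_{\partial\Omega} = \bh$. This is the Euler--Lagrange equation for the strictly convex functional $\bu \mapsto \int_\Omega F(D\bu)\,dx$ on the affine class $\{\bu \in W^{1,p}(\Omega;\R^m) : \bu|_{\partial\Omega}=\bh\}$, and strict convexity ensures that the minimizer $\bw$ is unique, so $\tilde\bw = \bw$. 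Since every sequence $t_k \to \infty$ admits a subsequence along which $\bv(\cdot, t_k) \to \bw$ in $L^p$ and weakly in $W^{1,p}$, the full family $\bv(\cdot, t)$ converges to $\bw$ in these topologies as $t \to \infty$.

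The step I expect to require the most care is upgrading weak to strong convergence in $W^{1,p}$. For this I would use that $\int_\Omega F(D\bv(\cdot, t))\,dx$ decreases to a limit $L \ge \int_\Omega F(D\bw)\,dx$; the strong convergence $\bv_{k_j} \to \bw$ in $L^r([0,1]; W^{1,p}(\Omega;\R^m))$ produces times $\tau_j \to \infty$ with $\bv(\cdot, \tau_j) \to \bw$ strongly in $W^{1,p}$, whence $\int_\Omega F(D\bv(\cdot, \tau_j))\,dx \to \int_\Omega F(D\bw)\,dx$. Since this integral also tends monotonically to $L$ along the full family, we obtain $L = \int_\Omega F(D\bw)\,dx$. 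Combining the full-time convergence of the energy with the weak $W^{1,p}$ convergence and the strict convexity and $p$-growth of $F$ yields, by a standard Radon--Riesz/uniform convexity argument, strong convergence $\bv(\cdot, t) \to \bw$ in $W^{1,p}(\Omega;\R^m)$. The stated PDE for $\bw$ is the Euler--Lagrange equation already obtained above.
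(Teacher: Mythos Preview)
Your proof is correct and follows essentially the same route as the paper: shift in time, apply the compactness of Theorem~\ref{BlowLem} to the shifted sequence, identify the (time-independent) limit with the unique minimizer, and upgrade weak to strong $W^{1,p}$ convergence via convergence of the energy together with the strict convexity of $F$. The only notable difference is that the paper invokes merely the Arzel\`a--Ascoli part of Theorem~\ref{BlowLem} and, rather than passing through the Euler--Lagrange equation, compares directly with the minimizer $\bw_0$ via the convexity inequality $F(D\bw_0)\ge F(D\bv^k)+DF(D\bv^k)\cdot(D\bw_0-D\bv^k)$ tested against the weak formulation, which yields $\int_\Omega F(D\bw_0)\,dx = L$ and hence the strong convergence in a single stroke.
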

\begin{proof}
Let $s_k$ denote a sequence of positive numbers increasing to $+\infty$ and set $\bv^k(x,t):=\bv(x,t+s_k)$.  By the proof of Lemma \ref{EntropyLem}, $t\mapsto \int_\Omega F(D{\bf v}(x,t))dx$ is nonincreasing. Thus, the mappings in the sequence $\{\bv^k\}_{k\in \N}$ are each solutions of \eqref{mainIBVP} with initial conditions $\{\bv^k(x,0)=\bv(x,s_k)\}_{k\in \N}$ which form a bounded sequence in $W^{1,p}(\Omega;\R^m)$.  Also note for each $t\ge 0$
\begin{align}\label{Energykk}
\int^t_0\int_\Omega D\psi(\bv^k_t(x,s))\cdot \bv^k_t(x,s)dxds  + \int_{\Omega}F(D\bv^k(x,t))dx & =\int_{\Omega}F(D\bv^k(x,0))dx \\
&\le  \int_{\Omega}F(D\bg(x))dx. \nonumber
\end{align}
In particular, by the first part of the proof of Theorem \ref{BlowLem}, there is a subsequence $\bv^{k_j}$ converging to some $\bw$ in $C([0,T]; L^p(\Omega; \R^m))$ for 
each $T>0$ and $\bv^{k_j}(\cdot, t)\rightharpoonup \bw(\cdot, t)$ in $W^{1,p}(\Omega;\R^m)$ for each $t \ge 0$.  

\par Corollary \ref{EntropyLem} and \eqref{Coercive} also imply the limit 
$$
L:=\lim_{\tau\rightarrow \infty}\int_\Omega F(D{\bf v}(x,\tau))dx=\lim_{k\rightarrow \infty}\int_\Omega F(D\bv^k(x,t))dx
$$
exists for each $t\ge 0$.  Passing to the limit as $k\rightarrow \infty$ in \eqref{Energykk} gives 
$$
\limsup_{k\rightarrow \infty}\int^t_0\int_\Omega(\psi(\bv^k_t(x,s)) -\psi(0))dxds+L\le \limsup_{k\rightarrow \infty} \int^t_0\int_\Omega D\psi(\bv^k_t(x,s))\cdot \bv^k_t(x,s)dxds+L=L.
$$
As $\psi(\bv^k_t) -\psi(0)$ is nonnegative, and $\psi$ is strictly convex, it follows that $\bv^k_t\rightarrow \bzero$ in $L^p(\Omega\times [0,T]; \R^m)$ and 
$D\psi(\bv^k_t)\rightarrow \bzero$ in $L^q(\Omega\times [0,T]; \R^m)$ for each $T>0$.

\par Let $\bw_0\in W^{1,p}(\Omega;\R^m)$ be the unique minimizer of $\int_\Omega F(D\bu(x))dx$ subject to the boundary condition $\bu|_{\partial \Omega}=\bh$ . For $t\ge 0$,  
\begin{align*}
\int^t_0\int_\Omega F(D\bw_0(x))dxds &\ge \int^t_0 \int_\Omega F(D\bv^k(x,s))dxds \\
& \quad\quad -  \int^t_0\int_\Omega D\psi(\bv^k_t(x,s))\cdot (\bw_0(x)-\bv^k(x,s))dxds.
\end{align*}
Letting $k=k_j$ and sending $j\rightarrow \infty$ gives, 
$$
\int^t_0\int_\Omega F(D\bw_0(x))dxds\ge Lt\ge \int^t_0\int_\Omega F(D\bw(x,s))dxdt\ge \int^t_0\int_\Omega F(D\bw_0(x))dxds
$$
for $t\ge 0$. Hence, 
$$
\int_\Omega F(D\bw_0(x))dx= L= \int_\Omega F(D\bw(x,t))dx
$$
for each $t\ge 0$ and in particular at $t=0$.  Thus $\bw(\cdot,0)=\bw_0$; and by the strict convexity of $F$, $\bv(\cdot, s_{k_j})\rightarrow \bw_0$ in $W^{1,p}(\Omega;\R^m)$. Since the sequence $\{s_k\}_{k\in \N}$ was arbitrary, it must be that 
$\bv(\cdot, t)\rightarrow \bw_0$ as $t\rightarrow \infty$ in $W^{1,p}(\Omega;\R^m)$.
\end{proof}
Let us now discuss a refinement of Proposition \ref{LargeTimeMan} when $F(M)=\frac{1}{p}|M|^p$, 
$\psi(w)=\frac{1}{p}|w|^p$ and $\bh=\bzero$. The associated initial value problem of interest is \eqref{PflowIVP}
\begin{align*}
\begin{cases}
|\bv_t|^{p-2}\bv_t=\Div(|D\bv|^{p-2}D\bv), &\quad \Omega\times(0,\infty) \\
\hspace{.51in}\bv = \bzero, &\quad \partial\Omega\times [0,\infty)  \\ 
\hspace{.51in}\bv =\bg, &\quad \Omega\times\{0\}
\end{cases}.
\end{align*} 
The previous theorem implies $\lim_{t\rightarrow \infty}\bv(x,t)=\bzero$ in $W^{1,p}_0(\Omega;\R^m)$ for any weak solution $\bv$.  However, 
we will see that more information on the large time behavior of weak solutions is available after exploiting the homogeneity of 
the equation $|\bv_t|^{p-2}\bv_t=\Div(|D\bv|^{p-2}D\bv)$ and the boundary condition $\bv|_{\partial\Omega} = \bzero$.

\par Recall the minimization problem \eqref{Lambdap} involving the optimal $p$-Rayleigh quotient
$$
\Lambda_p:=\inf\left\{\frac{\int_\Omega|D\bu(x)|^pdx}{\int_\Omega|\bu(x)|^pdx}:\bu \in W^{1,p}_0(\Omega;\R^m)\setminus\{\bzero\}\right\}.
$$
Direct methods of calculus of variations imply there is a minimizer $\bu\neq \bzero \in W^{1,p}_0(\Omega;\R^m)$ for $\Lambda_p$, which is also known as a {\it $p$-ground state}. It is also readily verified that $\bu$ is a minimizer if and only if it is a weak solution of the Euler-Lagrange system
\begin{equation}\label{EL}
\begin{cases}
-\Div(|D\bu|^{p-2}D\bu)=\Lambda_p |\bu|^{p-2}\bu, \quad  &x\in \Omega\\
\hspace{1.18in} \bu = \bzero, \quad &x\in \partial \Omega
\end{cases}.
\end{equation}
Thus $\Lambda_p$ has a natural interpretation as a nonlinear eigenvalue.

\par Observe that for any admissible $\bu$ for minimization problem associated with $\Lambda_p$ and orthogonal $m\times m$ matrix $O$, $O\bu$ is admissible and has the same $p$-Rayleigh quotient as $\bu$. Consequently, we cannot expect minimizers to be unique except modulo multiplication by a nonzero scalar and matrix multiplication by an orthogonal matrix.  When $m=1$,  the uniqueness modulo multiplication by scalars was first established using a maximum principle argument by Sakaguchi \cite{Saka}.  Belloni and Kawohl issued a much simpler proof using convexity \cite{BellKaw};  it would be interesting to deduce whether or not their ideas generalize to this vectorial $(m>1)$ setting.  Before studying solutions of \eqref{PflowIVP}, we make a few observations about $p$-ground states and their $p$-Rayleigh quotients. 
\begin{prop}
Let $\lambda_p$ denote the optimal p-Rayleigh quotient when $m=1$. Then 
\begin{equation}\label{LamEst}
m^{-\left|\frac{p}{2}-1\right|}\lambda_p\le \Lambda_p\le \lambda_p.
\end{equation}
In particular, $\Lambda_2=\lambda_2$. 
\end{prop}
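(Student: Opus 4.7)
The plan is to prove the two inequalities separately. For the upper bound $\Lambda_p \le \lambda_p$, I would embed a scalar minimizer into the vectorial problem by the natural map $u \mapsto u e_1 =: \bu$ where $e_1 \in \R^m$ is a unit vector. This $\bu$ is admissible in the class for $\Lambda_p$, and its gradient $D\bu$ has $Du$ as one row and zeros elsewhere, so $|D\bu| = |Du|$ and $|\bu| = |u|$ pointwise. The $p$-Rayleigh quotient of $\bu$ therefore equals that of $u$, and taking the infimum over scalar minimizers yields $\Lambda_p \le \lambda_p$ directly.

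For the lower bound, the key tool is the elementary power mean inequality: for nonnegative reals $x_1,\dots,x_m$ and exponent $q>0$, one has $(\sum_i x_i)^q \ge m^{q-1}\sum_i x_i^q$ when $q \ge 1$ and $(\sum_i x_i)^q \le \sum_i x_i^q$ when $q \le 1$ (with the reversed companion inequalities also available). Applied componentwise with $q=p/2$ and $x_i = |u^i|^2$ (and $x_i = |Du^i|^2$), any $\bu = (u^1,\dots,u^m) \in W^{1,p}_0(\Omega;\R^m)$ satisfies
\begin{equation*}
\int_\Omega |D\bu|^p\,dx \ge m^{\min(p/2-1,0)} \sum_{i=1}^m \int_\Omega |Du^i|^p\,dx,
\end{equation*}
\begin{equation*}
\int_\Omega |\bu|^p\,dx \le m^{\max(p/2-1,0)} \sum_{i=1}^m \int_\Omega |u^i|^p\,dx.
\end{equation*}
Using that each nonzero component $u^i \in W^{1,p}_0(\Omega)$ satisfies $\int_\Omega |Du^i|^p\,dx \ge \lambda_p \int_\Omega |u^i|^p\,dx$, summing in $i$, and dividing the two displayed inequalities gives $\int |D\bu|^p / \int |\bu|^p \ge m^{-|p/2-1|} \lambda_p$, whence the desired bound after taking the infimum.

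Combining the two halves at $p=2$ makes $|p/2-1|=0$, so the prefactor $m^{-|p/2-1|}$ equals $1$ and the two bounds squeeze to $\Lambda_2 = \lambda_2$. I do not anticipate any real obstacle; the only issue to be careful about is which direction of the power mean inequality is needed for each of the two factors, which is why the argument naturally splits by the sign of $p/2 - 1$ and yields the symmetric exponent $|p/2-1|$ in the end.
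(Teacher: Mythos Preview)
Your argument is correct and follows essentially the same route as the paper: the upper bound via the embedding $u\mapsto u e_1$ is exactly the paper's $\bu(x)=u(x)z$, and the lower bound via componentwise comparison of $\ell^2$ and $\ell^p$ norms on $\R^m$ is precisely what the paper does (it writes out only the case $p\ge 2$ and leaves $p\le 2$ to the reader, whereas you handle both at once). One small slip to fix: the power-mean inequality you quote for $q\ge 1$ is stated with the wrong sign (it should be $(\sum_i x_i)^q \le m^{q-1}\sum_i x_i^q$ for $q\ge 1$, with the reverse for $q\le 1$), though the two displayed estimates you actually apply are correct.
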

\begin{proof}
Choose $u\in W^{1,p}_0(\Omega)\setminus\{0\}$ satisfying 
$$
\lambda_p=\frac{\int_\Omega |Du(x)|^pdx}{\int_\Omega |u(x)|^pdx}
$$
and let $z\in \R^m$ be nonzero vector. Note $\bu(x):=u(x) z\in W^{1,p}_0(\Omega;\R^m)$, $\bu\neq \bzero$ and thus  
$$
\Lambda_p\le \frac{\int_\Omega|D\bu(x)|^pdx}{\int_\Omega|\bu(x)|^pdx}=\frac{|z|^p\int_\Omega |Du(x)|^pdx}{|z|^p\int_\Omega |u(x)|^pdx}=\lambda_p.
$$
\par Now assume $p\ge 2$ and $\bu=(u^1, \dots,u^m)\in W^{1,p}_0(\Omega;\R^m)$. The elementary inequality 
$$
\left(\sum^m_{i=1}|z^i|^p\right)^{1/p}\le \left(\sum^m_{i=1}|z^i|^2\right)^{1/2} \le m^{\frac{1}{2}-\frac{1}{p}} \left(\sum^m_{i=1}|z^i|^p\right)^{1/p}, \quad (z^1,\dots, z^m)\in \R^m
$$
gives
\begin{align*}
\int_\Omega|D\bu(x)|^pdx & =\int_\Omega\left(\sum^{m}_{i=1}|Du^i(x)|^2\right)^{p/2}dx \\
& \ge\int_\Omega \sum^{m}_{i=1}|Du^i(x)|^p dx\\
& \ge \lambda_p \int_\Omega\sum^{m}_{i=1}|u^i(x)|^p dx\\
& \ge \lambda_p m^{-\left(\frac{p}{2}-1\right)} \int_\Omega|\bu(x)|^p dx.
\end{align*}
We are then able to conclude the lower bound in \eqref{LamEst} for $p\ge 2$. A proof for $1\le p\le 2$ can be made similarly. 
 \end{proof}
 \begin{cor}
 Assume $p=2$ and $m>1$. Any $2$-ground state is necessarily of the form 
 \begin{equation}\label{SpecForm}
 \bu(x)=u(x)z, \quad x\in \Omega
 \end{equation}
 where $z\in \R^m$ is a nonzero vector and $u$ is a $2$-ground state for $m=1$.  
 \end{cor}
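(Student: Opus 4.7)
The plan is to exploit the fact that when $p=2$ the Euler-Lagrange system \eqref{EL} decouples completely. Writing $\bu = (u^1, \dots, u^m)$ and specializing \eqref{EL} to $p=2$, one sees that each component satisfies
\begin{equation*}
-\Delta u^i = \Lambda_2 u^i \quad \text{in } \Omega, \qquad u^i = 0 \quad \text{on } \partial\Omega, \qquad i=1,\dots,m.
\end{equation*}
Thus every component of a $2$-ground state is a Dirichlet eigenfunction of $-\Delta$ on $\Omega$ with eigenvalue $\Lambda_2$ (or is identically zero).

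Next I would invoke the previous proposition to identify $\Lambda_2 = \lambda_2$, where $\lambda_2$ is the smallest Dirichlet eigenvalue of $-\Delta$ on $\Omega$. This is the crucial step because the smallest Dirichlet eigenvalue of the Laplacian on a bounded domain is \emph{simple}: its eigenspace is one-dimensional. This is a classical consequence of the strong maximum principle together with the fact that a first eigenfunction can be taken to be of one sign in $\Omega$. Fix, then, a normalized scalar $2$-ground state $u \in W^{1,2}_0(\Omega)$.

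With simplicity in hand, each component $u^i$ lies in the one-dimensional eigenspace spanned by $u$, so there exist constants $c_1,\dots,c_m \in \R$ with $u^i = c_i u$. Setting $z := (c_1,\dots,c_m) \in \R^m$ yields $\bu(x) = u(x)\,z$. Since $\bu \not\equiv \bzero$, the vector $z$ is nonzero, and the representation \eqref{SpecForm} follows.

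There is essentially no serious obstacle here; the only substantive input beyond the preceding proposition is the simplicity of the first Dirichlet eigenvalue of the Laplacian, which is standard. One could also note as a consistency check that any $\bu$ of the form $u(x)z$ with $u$ a scalar $2$-ground state trivially achieves the Rayleigh quotient $\lambda_2 = \Lambda_2$, matching the upper bound already established in the proof of the preceding proposition.
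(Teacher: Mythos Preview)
Your proposal is correct and follows essentially the same approach as the paper: decouple the Euler--Lagrange system at $p=2$, invoke $\Lambda_2=\lambda_2$ from the preceding proposition, and conclude via the simplicity of the first Dirichlet eigenvalue of $-\Delta$. The paper's proof is terser---it merely records the decoupled system and says ``the claim follows as $\Lambda_2=\lambda_2$''---but the simplicity argument you spell out is exactly what is being tacitly used.
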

 \begin{proof}
 Observe that when $p=2$, the system \eqref{EL} decouples: for each $ i=1,\dots, m$
  $$
  \begin{cases}
  -\Delta u^i=\Lambda_2 u^i, \quad & x\in \Omega\\
  \hspace{.25in} u^i=0, \quad & x\in \partial \Omega
  \end{cases} 
  $$
The claim follows  as $\Lambda_2=\lambda_2. $
  \end{proof}
  \begin{rem}
  When $p\neq 2$ and $u$ is $p$-ground state  for $m=1$, $\bu$ defined in \eqref{SpecForm} satisfies \eqref{EL} with $\lambda_p$ replacing $\Lambda_p$. 
  \end{rem}

\par Observe that for weak solutions of \eqref{PflowIVP}, the energy identity takes the simple form 
$$
\int^t_0\int_\Omega |\bv_t(x,s)|^pdxds + \int_\Omega\frac{1}{p}|D\bv(x,t)|^pdx=\int_\Omega\frac{1}{p}|D\bg(x)|^pdx
$$
$t\ge 0$. We can also derive a new estimate for solutions by taking the inner product of both sides of the system
\begin{equation}\label{PFlow}
|\bv_t|^{p-2}\bv_t=\Div(|D\bv|^{p-2}D\bv) \nonumber 
\end{equation} 
with $\bv$ and integrating by parts. To this end, it will be useful for us to recall the constant 
$$
\Upsilon_p=\Lambda_p^{\frac{1}{p-1}}.
$$
defined in \eqref{UpsAndDowns}.

\begin{lem}
Let $\bv$ be a weak solution with $\bg\in W^{1,p}_0(\Omega;\R^m)$. Then for almost every $t>0$ 
\begin{equation}\label{Dvvtbound}
\Upsilon_p \int_\Omega |D\bv(x,t)|^pdx \le \int_\Omega |\bv_t(x,t)|^pdx
\end{equation}
and 
\begin{equation}\label{ExpDecay}
\int_{\Omega}|D\bv(x,t)|^pdx\le e^{-(p\Upsilon_p)t}\int_{\Omega}|D\bg(x)|^pdx
\end{equation}
for each $t>0$.
\end{lem}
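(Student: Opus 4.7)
The plan is to first derive \eqref{Dvvtbound} by testing the weak formulation against $\bv(\cdot,t)$ itself, and then deduce \eqref{ExpDecay} from \eqref{Dvvtbound} together with the energy identity \eqref{EnergyIdentity2} via a Gronwall-type argument.

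For the first step, I would fix a time $t\in (0,\infty)$ at which the weak solution identity \eqref{WeakSolnCond} holds; the definition of weak solution guarantees this is true for almost every $t$, and at such times $\bv(\cdot,t)\in W^{1,p}_0(\Omega;\R^m)$ is itself an admissible test function. Specializing \eqref{WeakSolnCond} to $\psi(w)=\tfrac{1}{p}|w|^p$ and $F(M)=\tfrac{1}{p}|M|^p$ (so that $D\psi(w)=|w|^{p-2}w$ and $DF(M)=|M|^{p-2}M$), and choosing $\bw=\bv(\cdot,t)$, I would obtain
\[
\int_\Omega |D\bv(x,t)|^p\,dx \;=\; -\int_\Omega |\bv_t(x,t)|^{p-2}\bv_t(x,t)\cdot\bv(x,t)\,dx.
\]
Applying the Cauchy-Schwarz inequality on $\R^m$ pointwise and then H\"older's inequality in $x$ gives
\[
\int_\Omega |D\bv(x,t)|^p\,dx \;\le\; \Bigl(\int_\Omega |\bv_t(x,t)|^p\,dx\Bigr)^{(p-1)/p} \Bigl(\int_\Omega |\bv(x,t)|^p\,dx\Bigr)^{1/p}.
\]
Now, since $\bv(\cdot,t)\in W^{1,p}_0(\Omega;\R^m)$, the definition \eqref{Lambdap} of $\Lambda_p$ yields $\int_\Omega |\bv(x,t)|^p\,dx \le \Lambda_p^{-1}\int_\Omega |D\bv(x,t)|^p\,dx$. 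Substituting this, dividing both sides by $(\int_\Omega |D\bv|^p\,dx)^{1/p}$ (the estimate is trivial if this quantity vanishes), and raising the resulting inequality to the power $p/(p-1)$ produces exactly \eqref{Dvvtbound} with $\Upsilon_p=\Lambda_p^{1/(p-1)}$.

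For the second step, I would invoke the energy identity \eqref{EnergyIdentity2}, which in this homogeneous setting asserts that
\[
\frac{d}{dt}\int_\Omega \tfrac{1}{p}|D\bv(x,t)|^p\,dx \;=\; -\int_\Omega |\bv_t(x,t)|^p\,dx
\]
in the almost-everywhere and absolutely-continuous sense. Combining this with \eqref{Dvvtbound}, the function $E(t):=\int_\Omega |D\bv(x,t)|^p\,dx$ is absolutely continuous on $[0,T]$ for each $T$ and satisfies $E'(t)\le -p\Upsilon_p E(t)$ for a.e.\ $t$. Gronwall's inequality then gives $E(t)\le e^{-p\Upsilon_p t}E(0)$, which is \eqref{ExpDecay}.

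The only delicate point is the justification that $\bv(\cdot,t)$ is a legitimate test function in \eqref{WeakSolnCond} at almost every time; this is automatic from \eqref{NaturalSpace} and the boundary condition in Definition \ref{WeakDefn}, so no additional argument is needed. The remaining estimates are elementary applications of H\"older, the Rayleigh quotient bound, and Gronwall.
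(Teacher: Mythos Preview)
Your proposal is correct and follows essentially the same approach as the paper: test with $\bv(\cdot,t)$, apply H\"older and the Rayleigh-quotient bound to obtain \eqref{Dvvtbound}, then combine with the energy identity \eqref{EnergyIdentity} and Gr\"onwall to conclude \eqref{ExpDecay}.
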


\begin{proof} Using $\bv(\cdot, t)$ as a test function in \eqref{WeakSolnCond},
\begin{align}\label{HolderDerBound}
\int_{\Omega}|D\bv(x,t)|^pdx&=\int_{\Omega}|D\bv(x,t)|^{p-2}D\bv(x,t)\cdot D\bv(x,t)dx \nonumber \\
&=-\int_{\Omega}|\bv_t(x,t)|^{p-2}\bv_t(x,t)\cdot \bv(x,t) dx \nonumber \\
&\le \left(\int_{\Omega}|\bv_t(x,t)|^pdx\right)^{1-1/p}\left(\int_{\Omega}|\bv(x,t)|^pdx\right)^{1/p}  \\
&\le \Lambda_p^{-1/p}\left(\int_{\Omega}|\bv_t(x,t)|^pdx\right)^{1-1/p}\left(\int_{\Omega}|D\bv(x,t)|^pdx\right)^{1/p}.\nonumber
\end{align}
This proves \eqref{Dvvtbound}.  Employing \eqref{EnergyIdentity} and \eqref{Dvvtbound} gives
\begin{equation}\label{DiffIneqmu}
\frac{d}{dt}\int_{\Omega}|D\bv(x,t)|^pdx = -p\int_\Omega|\bv_t(x,t)|^pdx\le - p \Upsilon_p \int_{\Omega}|D\bv(x,t)|^pdx.
\end{equation}
Inequality \eqref{ExpDecay} now follows from Gr\"{o}nwall's inequality. 
\end{proof}

\begin{cor}\label{GradUNon} For any weak solution $\bv$ of \eqref{mainIVP},
$$
t\mapsto e^{(\Upsilon_p p)t}\int_{\Omega}|D\bv(x,t)|^pdx
$$
is nonincreasing and 
\begin{equation}\label{ScaledEnergyInequality}
\int^t_0\int_\Omega e^{(\Upsilon_p p) s}(|\bv_t(x,s)|^p - \Upsilon_p |D\bv(x,s)|^p)dxds + e^{(\Upsilon_p p) t}\int_\Omega \frac{|D\bv(x,t)|^p}{p}dx = \int_\Omega \frac{|D\bg(x)|^p}{p}dx
\end{equation}
for $t\ge 0.$ 
\end{cor}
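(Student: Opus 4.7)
The plan is to apply the fundamental theorem of calculus to the absolutely continuous function
$$
\Phi(t):=e^{(\Upsilon_p p)t}\int_\Omega \frac{1}{p}|D\bv(x,t)|^p dx,
$$
using the energy identity supplied by Lemma \ref{EnergyLemma} together with the a priori inequality \eqref{Dvvtbound} just established.

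First, Lemma \ref{EnergyLemma} applied with $F(M)=\tfrac{1}{p}|M|^p$ and $\psi(w)=\tfrac{1}{p}|w|^p$ shows that the map $t\mapsto \int_\Omega \tfrac{1}{p}|D\bv(x,t)|^p dx$ is absolutely continuous on $[0,T]$ and that
$$
\frac{d}{dt}\int_\Omega \frac{1}{p}|D\bv(x,t)|^p dx = -\int_\Omega D\psi(\bv_t)\cdot \bv_t\,dx = -\int_\Omega |\bv_t(x,t)|^p dx
$$
for almost every $t$. Since $e^{(\Upsilon_p p)t}$ is smooth, $\Phi$ is also absolutely continuous, and the product rule together with the identity above gives
$$
\Phi'(t)=e^{(\Upsilon_p p)t}\Bigl(\Upsilon_p\int_\Omega |D\bv(x,t)|^p dx-\int_\Omega |\bv_t(x,t)|^p dx\Bigr)
$$
for almost every $t\ge 0$. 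By inequality \eqref{Dvvtbound}, the quantity in parentheses is nonpositive, so $\Phi'\le 0$ a.e.; combined with absolute continuity this yields that $e^{(\Upsilon_p p)t}\int_\Omega |D\bv(x,t)|^p dx=p\Phi(t)$ is nonincreasing, proving the first claim.

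For the second claim, integrate the expression for $\Phi'(t)$ from $0$ to $t$. Since $\Phi$ is absolutely continuous, this yields
$$
e^{(\Upsilon_p p)t}\int_\Omega \frac{|D\bv(x,t)|^p}{p}dx - \int_\Omega \frac{|D\bg(x)|^p}{p}dx = -\int_0^t e^{(\Upsilon_p p)s}\Bigl(\int_\Omega |\bv_t(x,s)|^p - \Upsilon_p|D\bv(x,s)|^p\,dx\Bigr)ds,
$$
which is exactly \eqref{ScaledEnergyInequality} after rearrangement.

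There is essentially no obstacle beyond the routine verification that the product rule applies in the Sobolev sense, which is immediate because one factor is smooth and the other is absolutely continuous. Both claims are straightforward consequences of the preceding lemma and the energy identity \eqref{EnergyIdentity}.
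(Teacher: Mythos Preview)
Your proof is correct and follows essentially the same approach as the paper: the paper's one-line argument invokes the differential inequality \eqref{DiffIneqmu} (which is exactly your product-rule computation combined with \eqref{Dvvtbound}) for the monotonicity, and then the fundamental theorem of calculus for \eqref{ScaledEnergyInequality}. You have simply spelled out the details that the paper leaves implicit.
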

\begin{proof}
The first assertion follows directly from \eqref{DiffIneqmu}, and the second follows from the fundamental theorem of calculus for the Lebesgue integral. 
\end{proof}
\begin{prop}\label{RayleighGoDown} Assume that $\bv$ is a weak solution of \eqref{PflowIVP} such that $\bv(\cdot, t)\neq \bzero$ for each 
$t\ge 0$. Then the $p$-Rayleigh quotient 
$$
[0,\infty)\ni t\mapsto \frac{\int_{\Omega}|D\bv(x,t)|^pdx }{\int_{\Omega}|\bv(x,t)|^pdx }
$$
is nonincreasing.
\end{prop}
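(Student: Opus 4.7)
The plan is to view the $p$-Rayleigh quotient as $R(t):=N(t)/D(t)$ with
\[
N(t):=\int_\Omega|D\bv(x,t)|^p\,dx,\qquad D(t):=\int_\Omega|\bv(x,t)|^p\,dx,
\]
and show that $R'(t)\le 0$ for almost every $t$; since $R$ will be absolutely continuous, this gives the claim. Note $D(t)>0$ by hypothesis, and $N(t)>0$ by Poincar\'e since $\bv(\cdot,t)\in W^{1,p}_0(\Omega;\R^m)$ is nonzero.

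First I would establish absolute continuity and pointwise derivative formulas for $N$ and $D$. For $N$, since $F(M)=\tfrac{1}{p}|M|^p$, Lemma \ref{EnergyLemma} applies and yields
\[
N'(t)=-p\int_\Omega|\bv_t(x,t)|^p\,dx \quad \text{for a.e.\ } t.
\]
For $D$, the regularity $\bv\in L^\infty_{\mathrm{loc}}([0,\infty);L^p)$ together with $\bv_t\in L^p_{\mathrm{loc}}(\Omega\times(0,\infty);\R^m)$ lets one apply the chain rule pointwise in $x$ and then Fubini to conclude that $D$ is absolutely continuous with
\[
D'(t)=p\int_\Omega|\bv(x,t)|^{p-2}\bv(x,t)\cdot \bv_t(x,t)\,dx \quad \text{for a.e.\ } t.
\]
Next I would use $\bv(\cdot,t)$ as a test function in the weak formulation \eqref{WeakSolnCond}, which (as already recorded in the first lines of \eqref{HolderDerBound}) gives the pairing identity
\[
N(t)=-\int_\Omega|\bv_t(x,t)|^{p-2}\bv_t(x,t)\cdot \bv(x,t)\,dx.
\]

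The core of the argument is then a double application of H\"older's inequality with conjugate exponents $p/(p-1)$ and $p$. Setting $A(t):=\int_\Omega|\bv_t|^p\,dx$, the pairing identity combined with H\"older yields
\[
N(t)\le A(t)^{(p-1)/p}D(t)^{1/p},\qquad \frac{|D'(t)|}{p}\le D(t)^{(p-1)/p}A(t)^{1/p}.
\]
Multiplying these two bounds gives $N(t)\,|D'(t)|/p\le A(t)D(t)=-N'(t)D(t)/p$, that is,
\[
N(t)\,|D'(t)|\le -N'(t)\,D(t).
\]
Since $N(t)>0$, this implies $N(t)D'(t)\ge -N(t)|D'(t)|\ge N'(t)D(t)$, hence $N'(t)D(t)-N(t)D'(t)\le 0$ and $R'(t)\le 0$ almost everywhere.

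The only real subtlety is justifying the absolute continuity and pointwise differentiation formula for $D$, since $\bv$ is only a weak solution; this is standard given $\bv_t\in L^p$, but it is where care is needed. Everything else is an elementary consequence of the PDE (through the pairing identity) and H\"older's inequality, and the proof is morally the same one that produces inequality \eqref{HolderDerBound} but applied to both ``slots'' of the time differentiation.
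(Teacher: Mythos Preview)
Your proof is correct and follows essentially the same approach as the paper's: both differentiate the quotient, invoke the pairing identity obtained by testing the equation with $\bv(\cdot,t)$, and then apply H\"older's inequality twice (once to bound $N(t)$ via $\int|\bv_t|^{p-1}|\bv|$ and once to bound $|D'(t)|$ via $\int|\bv|^{p-1}|\bv_t|$) before multiplying the two estimates. The paper writes out the quotient-rule computation explicitly rather than packaging it as $N|D'|\le -N'D$, and it likewise flags the differentiation of $t\mapsto\int_\Omega|\bv|^p$ as the only point requiring care, citing a smoothing argument.
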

\begin{proof}
Making use of \eqref{NaturalSpace}, 
$$
\frac{d}{dt}\int_\Omega\frac{1}{p}|\bv(x,t)|^pdx=\int_\Omega|\bv(x,t)|^{p-2}\bv(x,t)\cdot \bv_t(x,t)dx
$$
holds for Lebesgue almost every time $t>0$; this can be proved by a smoothing argument like the one given in Theorem 3 of section 5.9 of \cite{Evans}. We calculate for almost every $t>0$
\begin{align}\label{RayleighComp}
\frac{d}{dt}\frac{\int_{\Omega}|D\bv|^pdx }{\int_{\Omega}|\bv|^pdx } & = 
-p\frac{\int_{\Omega}|\bv_t|^pdx }{\int_{\Omega}|\bv|^pdx } - p \frac{\int_{\Omega}|D\bv|^pdx }{\left(\int_{\Omega}|\bv|^pdx\right)^2} \int_{\Omega}|\bv|^{p-2} \bv\cdot \bv_tdx \nonumber \\
&=\frac{p}{\left(\int_{\Omega}|\bv|^pdx\right)^2}\left\{\int_{\Omega}|D\bv|^pdx\int_{\Omega}|\bv|^{p-2} \bv\cdot(-\bv_t)dx -\int_{\Omega}|\bv|^pdx \int_{\Omega}|\bv_t|^pdx \right\}.
\end{align}
By H\"{o}lder's inequality 
$$
\int_{\Omega}|\bv|^{p-2} \bv\cdot(-\bv_t)dx\le \left(\int_{\Omega}|\bv|^{p}dx\right)^{1-1/p} \left(\int_{\Omega}|\bv_t|^{p}dx\right)^{1/p},
$$
and combining with \eqref{HolderDerBound} gives
$$
\int_{\Omega}|D\bv|^pdx\int_{\Omega}|\bv|^{p-2} \bv\cdot(-\bv_t)dx \le \int_{\Omega}|\bv|^pdx \int_{\Omega}|\bv_t|^pdx.
$$ 
From \eqref{RayleighComp}, we conclude
$$
\frac{d}{dt} \frac{\int_{\Omega}|D\bv|^pdx }{\int_{\Omega}|\bv|^pdx }\le 0.
$$
\end{proof}

Proposition \ref{RayleighGoDown} exhibits a connection between the doubly nonlinear evolution \eqref{PflowIVP} and the minimization problem \eqref{Lambdap}.  We also remark that if 
$\bg$ is a $p$-ground state, then 
\begin{equation}\label{SepVarSoln}
\bv(x,t)=e^{-\Upsilon_p t}\bg(x)
\end{equation}
is a weak solution of \eqref{PflowIVP}.  In fact, we can use the nonincreasing property of the $p$-Rayleigh quotient of weak solutions to conclude the converse. 
\begin{prop}
Assume that $\bg$ is a $p$-ground state. Then $\bv$ defined by \eqref{SepVarSoln} is the unique weak solution of \eqref{PflowIVP}. 
\end{prop}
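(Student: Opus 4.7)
The plan is to establish existence by direct substitution into \eqref{PflowIVP} and then extract uniqueness from the monotonicity of the $p$-Rayleigh quotient along the flow, which will force a separation-of-variables structure.

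For existence, since $\bg$ is a $p$-ground state it satisfies the Euler--Lagrange system \eqref{EL}. Setting $\bv(x,t):=e^{-\Upsilon_p t}\bg(x)$, a direct computation yields $|\bv_t|^{p-2}\bv_t=-\Upsilon_p^{p-1}e^{-(p-1)\Upsilon_p t}|\bg|^{p-2}\bg$ and $\Div(|D\bv|^{p-2}D\bv)=e^{-(p-1)\Upsilon_p t}\Div(|D\bg|^{p-2}D\bg)=-\Lambda_p e^{-(p-1)\Upsilon_p t}|\bg|^{p-2}\bg,$ which agree thanks to \eqref{UpsAndDowns}. The boundary and initial conditions, together with the regularity \eqref{NaturalSpace}, are immediate.

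For uniqueness, let $\bv$ be any weak solution of \eqref{PflowIVP} and define
$T^*:=\sup\{t>0:\bv(\cdot,s)\neq\bzero\text{ for every }s\in[0,t]\}.$
Since $\bg\neq\bzero$ and $\bv\in C([0,\infty);L^p(\Omega;\R^m))$, we have $T^*>0$. On $[0,T^*)$ Proposition \ref{RayleighGoDown} applies, so the Rayleigh quotient $R(t):=\int_\Omega|D\bv|^p\,dx/\int_\Omega|\bv|^p\,dx$ is nonincreasing. Because $\bg$ is a ground state, $R(0)=\Lambda_p$, and by the very definition of $\Lambda_p$ we have $R(t)\ge\Lambda_p$ for each $t$. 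Hence $R(t)\equiv\Lambda_p$ on $[0,T^*)$, so $\bv(\cdot,t)$ is itself a $p$-ground state there and satisfies the weak form of \eqref{EL}. Subtracting this identity from the weak formulation \eqref{WeakSolnCond} of the PDE (tested against the same $\bw\in W^{1,p}_0(\Omega;\R^m)$) gives, for almost every $t\in[0,T^*)$,
$$\int_\Omega\bigl(|\bv_t|^{p-2}\bv_t+\Lambda_p|\bv|^{p-2}\bv\bigr)\cdot\bw\,dx=0,$$
so that $|\bv_t|^{p-2}\bv_t=-\Lambda_p|\bv|^{p-2}\bv$ a.e.\ on $\Omega\times(0,T^*)$. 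Taking norms yields $|\bv_t|=\Upsilon_p|\bv|$, and then matching directions (trivially on the zero set of $\bv$) gives $\bv_t=-\Upsilon_p\bv$ a.e. Viewed in $t$ for a.e.\ fixed $x$, this is a scalar linear ODE with $\bv(x,0)=\bg(x)$, so $\bv(x,t)=e^{-\Upsilon_p t}\bg(x)$ on $\Omega\times[0,T^*)$. By $L^p$-continuity this extends to $t=T^*$ with $\bv(\cdot,T^*)=e^{-\Upsilon_p T^*}\bg\neq\bzero$, contradicting the maximality of $T^*$ unless $T^*=\infty$.

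The main obstacle is the passage from the two weak identities to the a.e.\ pointwise identity $\bv_t=-\Upsilon_p\bv$. One must check that both test-function formulations admit the same space of admissible $\bw$ (they do, namely $W^{1,p}_0(\Omega;\R^m)$), then extract the a.e.\ pointwise vector identity from the resulting variational equation, and finally argue carefully on $\{\bv=0\}$ in order to pass from the implicit relation $|\bv_t|^{p-2}\bv_t=-\Lambda_p|\bv|^{p-2}\bv$ to the explicit ODE. A secondary subtlety is verifying that the nonincreasing function $R(t)$ is bounded below by $\Lambda_p$ at \emph{every} $t\in[0,T^*)$, not merely almost every $t$, which is immediate since $\bv(\cdot,t)\in W^{1,p}_0(\Omega;\R^m)\setminus\{\bzero\}$ for each such $t$.
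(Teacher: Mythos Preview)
Your argument is correct and follows essentially the same route as the paper: use monotonicity of the Rayleigh quotient together with $R(0)=\Lambda_p$ to force $\bv(\cdot,t)$ to be a ground state on the maximal interval before vanishing, combine \eqref{EL} with the PDE to obtain the linear ODE $\bv_t=-\Upsilon_p\bv$, integrate, and rule out finite-time vanishing by contradiction. The only cosmetic difference is that the paper integrates the ODE as an identity in $C([0,T];L^p(\Omega;\R^m))$ (using that $t\mapsto\bv(\cdot,t)$ is absolutely continuous with values in $L^p$) rather than ``for a.e.\ fixed $x$''; your version is fine once one notes that $\bv_t\in L^p(\Omega\times(0,T))$ and Fubini give $\bv(x,\cdot)\in W^{1,p}(0,T)\subset AC[0,T]$ for a.e.\ $x$.
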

\begin{proof}
Suppose $\bv$ is a weak solution of \eqref{PflowIVP}. If $\bv(\cdot, t)\neq \bzero$ for each $t>0$, the monotonicity of the $p$-Rayleigh quotient gives 
$$
\frac{\int_{\Omega}|D\bv(x,t)|^pdx }{\int_{\Omega}|\bv(x,t)|^pdx }\le \frac{\int_{\Omega}|D\bg(x)|^pdx }{\int_{\Omega}|\bg(x)|^pdx }=\Lambda_p.
$$
Therefore, $\bv$ itself is a $p$ ground state and satisfies \eqref{EL}. Since $\bv$ also satisfies \eqref{PflowIVP}, 
\begin{equation}\label{vODE}
\bv_t = -\Upsilon_p \bv
\end{equation}
for almost every $t\ge 0$. As $t\mapsto\bv(\cdot,t)\in L^p(\Omega;\R^m)$ is locally absolutely continuous, \eqref{SepVarSoln} holds for each $t\ge 0$ by integration. 

\par Now suppose that $t=T$ is the first time that $\bv(\cdot, T)=\bzero$.  Observe \eqref{vODE} holds for almost every 
$t\in (0,T)$, and integrating this equation from $t=0$ to $t=T$ gives that \eqref{SepVarSoln} holds for $t=T$. However, this contradicts 
the definition of $T$. 
\end{proof}

\begin{proof} (of Theorem \ref{HomogenousConvergence})
Let $\{s_k\}_{k\in \N}$ be an increasing sequence of positive numbers tending to $\infty$, $T>0$ and set $\bv^k(x,t):=e^{\Upsilon_p s_k}\bv(x,t+s_k)$. Notice
 that $\bv^k$ is a weak solution of \eqref{PflowIVP} and $\{\bv^k(\cdot, 0)\}_{k\in \N}$ is bounded in $W^{1,p}_0(\Omega;\R^m)$.  By Theorem \ref{BlowLem}, there is 
 a subsequence $\bv^{k_j}$ tending to a weak solution $\bw$ of \eqref{PflowIVP}  in $C([0,T],L^p(\Omega; \R^m))$ and in $L^p([0,T]; W^{1,p}_0(\Omega;\R^m))$. 
Without loss of generality, we assume $\bw(\cdot, t)=\lim_{j\rightarrow\infty}\bv^{k_j}(\cdot,t)$ in $W^{1,p}_0(\Omega;\R^m)$ for almost every $t\in [0,T]$ since this convergence takes place for a subsequence. 

\par Define $\bu(x,t)=e^{\Upsilon_p t}\bw(x,t)$ and note
\begin{align*}
S:&=\lim_{\tau\rightarrow \infty}e^{p\Upsilon_p \tau}\int_\Omega |D\bv(x,\tau)|^pdx \nonumber \\
& =\lim_{j\rightarrow \infty}e^{p\Upsilon_p (t+s_k)}\int_\Omega |D\bv(x,t+s_k)|^pdx \nonumber \\
& =\lim_{j\rightarrow \infty}e^{p\Upsilon_pt}\int_\Omega |D\bv^k(x,t)|^pdx \nonumber \\
&=\int_\Omega |D\bu(x,t)|^pdx
\end{align*}
for almost every $t\in [0,T]$. However, as $\bw$ is a weak solution of \eqref{PflowIVP}, $t\mapsto \int_\Omega |D\bw(x,t)|^pdx$ is absolutely continuous and
thus $S=\int_\Omega |D\bu(x,t)|^pdx$ holds for each $t\in [0,T]$.   Moreover, as in the proof of \eqref{ScaledEnergyInequality}, 
\begin{align}\label{LambdaLimit}
0&=\frac{d}{dt}\int_\Omega |D\bu(x,t)|^pdx \nonumber \\
&=-\int_\Omega |\bu_t(x,t)-\Upsilon_p \bu(x,t)|^{p-2}(\bu_t(x,t)-\Upsilon_p \bu(x,t))\cdot \bu_t(x,t)dx \\
&=- \int_\Omega |\bu_t(x,t)-\Upsilon_p \bu(x,t)|^{p}dx +\Upsilon_p \int_\Omega |D\bu(x,t)|^pdx\nonumber
\end{align}
for almost every $t \ge 0$. It now follows from the convexity of $z\mapsto \frac{1}{p}|z|^p$ and \eqref{LambdaLimit} that
\begin{align*}
\Upsilon_p \int_\Omega |D\bu(x,t)|^pdx & =  \int_\Omega |\bu_t(x,t)-\Upsilon_p \bu(x,t)|^{p}dx\\
& \le  \int_\Omega | \Upsilon_p\bu(x,t)|^{p}dx \\
&\quad + p\int_\Omega |\bu_t(x,t)-\Upsilon_p \bu(x,t)|^{p-2}(\bu_t(x,t)-\Upsilon_p \bu(x,t))\cdot \bu_t(x,t)dx \\
& =  \Upsilon_p ^p\int_\Omega |\bu(x,t)|^{p}dx.
\end{align*}
\par Since $S\neq 0$, $\bu(\cdot, t)\neq \bzero$ and thus 
$$
\Lambda_p=\frac{\int_\Omega |D\bu(x,t)|^pdx}{\int_\Omega |\bu(x,t)|^{p}dx}=\frac{\int_\Omega |D\bw(x,t)|^pdx}{\int_\Omega |\bw(x,t)|^{p}dx}
$$
for each $t\ge 0$.  So for any $t_0\in [0,T]$ such that $\bv^{k_j}(\cdot, t_0)\rightarrow \bw(\cdot,t_0)$ in $W^{1,p}_0(\Omega;\R^m)$, 
\begin{align*}
\lim_{t\rightarrow\infty} \frac{\int_{\Omega}|D\bv(x,t)|^pdx }{\int_{\Omega}|\bv(x,t)|^pdx }&=\lim_{j\rightarrow\infty} \frac{\int_{\Omega}|D\bv(x, t_0+s_{k_j})|^pdx }{\int_{\Omega}|\bv(x,t_0+s_{k_j})|^pdx }\\
&=\lim_{k\rightarrow\infty} \frac{\int_{\Omega}|D\bv^{k_j}(x,t_0)|^pdx }{\int_{\Omega}|\bv^{k_j}(x,t_0)|^pdx }\\
&=\frac{\int_\Omega |D\bw(x,t_0)|^pdx}{\int_\Omega |\bw(x,t_0)|^{p}dx}\\
&=\Lambda_p.
\end{align*}
This proves \eqref{LamLimit}. 

\par For an increasing sequence of times $t_k\rightarrow \infty$, suppose $\{e^{\Upsilon_p t_k}\bv(\cdot, t_k)\}_{k\in \N}$ converges to some  $\bw$ weakly in $W_0^{1,p}(\Omega; \R^m)$.  By Rellich compactness, there is a further subsequence $\{e^{\Upsilon_p t_{k_j}}\bv(\cdot, t_{k_j})\}_{j\in \N}$ that converges in $L^p(\Omega;\R^m)$. As $S>0$, $\bw\neq \bzero$ and so 
$$
\Lambda_p=\lim_{j\rightarrow \infty} \frac{\int_{\Omega}|D\bv(x, t_{k_j})|^pdx }{\int_{\Omega}|\bv(x,t_{k_j})|^pdx }
=\lim_{j\rightarrow \infty} \frac{\int_{\Omega}|D(e^{\Upsilon_p t_{k_j}}\bv(x, t_{k_j}))|^pdx }{\int_{\Omega}|e^{\Upsilon_p t_{k_j}}\bv(x,t_{k_j})|^pdx }\ge\frac{\int_\Omega |D\bw(x)|^pdx}{\int_\Omega |\bw(x)|^{p}dx}\ge \Lambda_p.
$$
Therefore, $\bw$ is a $p$-ground state and 
$$
\int_{\Omega}|D(e^{\Upsilon_p t_{k_j}}\bv(x, t_{k_j}))|^pdx \ge \int_\Omega |D\bw(x)|^pdx.
$$
As a result, $\{e^{\Upsilon_p t_{k_j}}\bv(\cdot, t_{k_j})\}_{j\in \N}$ converges to $\bw$ strongly in $W_0^{1,p}(\Omega; \R^m)$. 
\end{proof}
We conjecture that the full limit  $\lim_{t\rightarrow \infty}e^{\Upsilon_pt}\bv(x,t)$ exists in $W^{1,p}_0(\Omega;\R^m)$ and is a $p$-ground state (when it does not vanish identically) for each $m\ge 1$. This is the case when $p=2$
as the system \eqref{PflowIVP} decouples into $m$ separate heat equations.  It may also be the case that the full limit holds for solutions obtained by way of the implicit scheme as additional bounds are available. We conclude this discussion with such an estimate.

\begin{prop}
Let $\bv$ denote a weak solution of \eqref{PflowIVP} as described in Proposition \ref{ExistProp}. Then 
$$
t\mapsto \int_{\Omega}|D\bv(x,t)|^pdx
$$ 
is convex and for $h>0$,
$$
\int_{\Omega}|\bv_t(x,t)|^pdx\le \frac{e^{-(p\Upsilon_p)(t-h)}}{h}\int_\Omega\frac{1}{p}|D\bg(x)|^pdx
$$
for almost every $t\ge h$. 
\end{prop}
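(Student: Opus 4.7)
The key observation is that for $\psi(w)=\frac{1}{p}|w|^p$ one has $D\psi(w)=|w|^{p-2}w$ and the Legendre transform $\psi^*(\xi)=\frac{1}{q}|\xi|^q$ with $q=p/(p-1)$, so
\begin{equation*}
\psi^*(D\psi(w))=\frac{1}{q}\bigl||w|^{p-2}w\bigr|^q=\frac{p-1}{p}|w|^p.
\end{equation*}
Hence Proposition \ref{ExistProp} (inequality \eqref{RigorNewDecrease}) applied to the solution $\bv$ arising from the implicit time scheme specializes to the statement that
\begin{equation*}
t\mapsto \int_{\Omega}|\bv_t(x,t)|^p\,dx
\end{equation*}
is nonincreasing for Lebesgue almost every pair $t_1\le t_2$.

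For the convexity assertion, I would use the energy identity \eqref{EnergyIdentity2}, which in the present homogeneous setting reads
\begin{equation*}
\int_0^t\!\!\int_\Omega |\bv_t|^p\,dx\,ds+\int_\Omega \frac{1}{p}|D\bv(x,t)|^p\,dx=\int_\Omega \frac{1}{p}|D\bg(x)|^p\,dx.
\end{equation*}
By Lemma \ref{EnergyLemma}, $t\mapsto\int_\Omega|D\bv(x,t)|^p\,dx$ is absolutely continuous with derivative $-p\int_\Omega|\bv_t(x,t)|^p\,dx$ a.e. Since the first step shows this derivative is nondecreasing in $t$, the function $t\mapsto \int_\Omega |D\bv(x,t)|^p\,dx$ is convex on $[0,\infty)$.

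For the pointwise estimate on $\bv_t$, I would combine the monotonicity above with a mean value argument. For almost every $s\in[t-h,t]$, the monotonicity gives $\int_\Omega|\bv_t(x,t)|^p\,dx\le \int_\Omega|\bv_t(x,s)|^p\,dx$, so integrating over $[t-h,t]$ yields
\begin{equation*}
h\int_\Omega|\bv_t(x,t)|^p\,dx\le \int_{t-h}^{t}\!\!\int_\Omega|\bv_t(x,s)|^p\,dx\,ds.
\end{equation*}
Applying the energy identity on $[t-h,t]$ (in the form of Lemma \ref{EnergyLemma}) bounds the right-hand side by $\int_\Omega \frac{1}{p}|D\bv(x,t-h)|^p\,dx$, and the exponential decay \eqref{ExpDecay} then controls this by $e^{-p\Upsilon_p(t-h)}\int_\Omega \frac{1}{p}|D\bg(x)|^p\,dx$. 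Dividing by $h$ gives the claimed bound.

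The only real obstacle is ensuring the monotonicity of $\int_\Omega|\bv_t(x,t)|^p\,dx$, and this is handled cleanly by invoking \eqref{RigorNewDecrease}, which is precisely the reason the hypothesis specifies a solution arising from the implicit scheme. Everything else is manipulation of identities already established.
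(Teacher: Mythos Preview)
Your proposal is correct and follows essentially the same approach as the paper. The paper also derives the monotonicity of $\int_\Omega|\bv_t|^p\,dx$ from \eqref{RigorNewDecrease} and uses \eqref{EnergyIdentity} to conclude convexity; for the pointwise bound the paper applies the convexity inequality $f(t-h)\ge f(t)-hf'(t)$ with $f(t)=\int_\Omega\frac{1}{p}|D\bv|^p\,dx$, which unwinds to exactly your mean-value computation $h\int_\Omega|\bv_t(\cdot,t)|^p\,dx\le\int_{t-h}^t\int_\Omega|\bv_t|^p\,dx\,ds\le\int_\Omega\frac{1}{p}|D\bv(\cdot,t-h)|^p\,dx$, followed by \eqref{ExpDecay}.
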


\begin{proof}
From Proposition \ref{BlowLem}, 
$$
\int_\Omega|\bv_t(x,t_2)|^pdx\le\int_\Omega|\bv_t(x,t_1)|^pdx
$$
for almost every $(t_1,t_2)\in [0,\infty)\times [0,\infty)$, $t_1\le t_2$.  The function $t\mapsto \int_{\Omega}|D\bv(x,t)|^pdx$ is then convex as 
$\frac{d}{dt}\int_{\Omega}|D\bv(x,t)|^pdx=-p\int_\Omega|\bv_t(x,t)|^pdx$ for almost every $t\ge 0$.  

\par  By \eqref{ExpDecay} and convexity, for almost every $t\ge h$
\begin{align*}
 e^{-(p\Upsilon_p)(t-h)}\int_{\Omega}\frac{1}{p}|D\bg(x)|^pdx & \ge \int_{\Omega}\frac{1}{p}|D\bv(x,t-h)|^pdx \\
& \ge  \int_{\Omega}\frac{1}{p}|D\bv(x,t)|^pdx + \left(-\int_\Omega|\bv_t(x,t)|^pdx\right)((t-h)-t)\\
&\ge h\int_\Omega|\bv_t(x,t)|^pdx.
\end{align*}

\end{proof}


\end{document}